\documentclass[11pt]{amsart}

\usepackage{amsthm,amsmath,amssymb}
\usepackage{mathrsfs}
\usepackage{bbm}
\usepackage{xcolor}
\usepackage{verbatim}

\usepackage[colorlinks=true,
            citecolor=blue,
            linkcolor=blue,
            urlcolor=blue]{hyperref}

\newtheorem{thm}{Theorem}[section]
\newtheorem{lem}[thm]{Lemma}
\newtheorem{prop}[thm]{Proposition}

\theoremstyle{definition}

\newcommand{\R}{\mathbb{R}}

\newcommand{\h}{\mathrm{h}}

\def\eqdefa{\buildrel\hbox{\footnotesize def}\over =}

\newcommand{\dx}{\,{\rm d}x}
\newcommand{\dt}{\,{\rm d}t}
\newcommand{\nablah}{\nabla_{\mathrm{h}}}

\numberwithin{equation}{section}

\begin{document}

\title[3-D MHD system]{Global Existence for 3D Anisotropic MHD system with Horizontal Dissipation and Small Horizontal Variations}

\author[Q.Lin]{Qiliang Lin}
\address[Q.Lin]%
{Department of Mathematics, Zhejiang  Normal University, Jinhua, 321004, China}
\email{linqiliang$\_$zjnu@163.com }
\author[C. Qian]{Chenyin Qian}
\address[C. Qian]
{Department of Mathematics, Zhejiang  Normal University, Jinhua,
	321004, China}
\email{cyqian@zjnu.edu.cn }
\author[D.ZHou]{Daoyao Zhou}
\address[D.Zhou]%
{Department of Mathematics, Zhejiang  Normal University, Jinhua, 321004, China}
\email{Zhoudaoyao$\_$zjnu@163.com }

\begin{abstract}
This paper establishes the global well-posedness for  the 3D anisotropic MHD system with partial dissipation: $\Delta_\mathrm{h}u$ for velocity and $\partial_1^2b$ for magnetic field, near background field $(0,1,0)$. Crucially, only horizontal components $(u^\mathrm{h}_0,b^\mathrm{h}_0)$ need to be small in $H^2(\R^3)$, while $(u^3_0,b^3_0)$ can be arbitrarily large.
Our analysis develops novel techniques including component-decoupled energies and iterative control of dangerous nonlinearities using the background field structure. This establishes the  global result for anisotropic MHD equations allowing large vertical data, breaking the full-smallness requirement of previous works.
\end{abstract}

\maketitle
\noindent {{\sl Key words:	3D anisotropic MHD system; Large solution; Mixed partial dissipation}  }

\section{Introduction}
The magnetohydrodynamic (MHD) equations are fundamental to modeling plasmas, liquid metals, and astrophysical flows. A central problem in both theory and applications is the stability of a conducting fluid in the presence of a strong, steady background magnetic field---a scenario ubiquitous in fusion devices, solar physics, and planetary dynamos. This stability problem translates directly to studying the global well-posedness of the anisotropic MHD system with only partial dissipation.

Specifically, we consider the 3D incompressible MHD system with anisotropic viscosity and magnetic diffusion:
\begin{align} \label{MHD}
	\left\{\begin{array}{l}
		\displaystyle {\partial_t}u+u\cdot\nabla{u}-\sum_{i=1}^3\mu_i\partial_i^2u =-\nabla {p}+B\cdot\nabla{B},\\
		\displaystyle {\partial_t}B+u\cdot\nabla{B}-\sum_{i=1}^3\nu_i\partial_i^2B =B\cdot\nabla{u},\\
		\displaystyle \operatorname{div}u=\operatorname{div}B=0,\\
		u{|_{t = 0}} =  {u_0},\; B{|_{t = 0}} =  {B_0},
	\end{array}
	\right.
\end{align}
where $u$, $B$, and $p$ denote the velocity, magnetic field, and total pressure, respectively, and $\mu_i, \nu_i > 0$ are anisotropic transport coefficients.

The presence of a background magnetic field $B^*$ introduces Alfv\'{e}n waves and fundamentally alters the dissipation mechanism compared to the Navier-Stokes case. Recent advances have established global well-posedness for partially dissipative systems, such as with dissipation $(\Delta_\mathrm{h}u,\partial_3^2B)$ and $B^*=(1,0,0)$ \cite{24han}, or $(\partial_1^2u,\Delta_\mathrm{h}B)$ with $B^*=(0,1,0)$ \cite{26wu}. However, these results uniformly require the \emph{full} initial perturbation $(u_0, B_0)$ to be small, a severe limitation for realistic flows where vertical motions (such as in buoyancy-driven convection or jet instabilities) can be large.

This raises a fundamental question: can the stabilizing effect of a background field compensate for the lack of full smallness, requiring only the \emph{horizontal} part of the data to be controlled?
{This work provides an affirmative answer.} We establish the global well-posedness of the 3D anisotropic MHD system under a \emph{horizontal smallness condition alone}, allowing the vertical components of the initial velocity and magnetic fields to be arbitrarily large. By exploiting the directional smoothing effects enabled by the background magnetic field and the system's structural anisotropy, we prove that stability is governed primarily by horizontal dynamics. This result breaks the prevailing ``full smallness'' paradigm and addresses a physically natural regime where strong vertical disturbances coexist with controlled horizontal fluctuations.

\subsection{The Model and Main Result}
Denoting $B=b+B^*$, we consider the 3D anisotropic MHD system with background magnetic field $B^*=(0,1,0)$ and dissipation pattern
\begin{align*}
	\mu_1=\mu_2=\nu_1=1,\qquad \mu_3=\nu_2=\nu_3=0,
\end{align*}
so that the system \eqref{MHD} could be reformulated as the perturbation $(u,b)=(u^\mathrm{h},u^3,b^\mathrm{h},b^3)$ which satisfies
\begin{align} \label{eq.MHD}
	\left\{\begin{array}{l}
		\displaystyle {\partial_t}u+u\cdot\nabla{u}-\Delta_\h u =-\nabla{p}+b\cdot\nabla{b}+\partial_2b,\quad x\in\mathbb{R}^3,t>0,\\
		\displaystyle {\partial_t}b+u\cdot\nabla{b}-\partial_1^2b =b\cdot\nabla{u}+\partial_2u,\\
		\displaystyle \operatorname{div}u=\operatorname{div}b=0,\\
		u{|_{t = 0}} =  {u_0},b{|_{t = 0}} =  {b_0}.
	\end{array}
	\right.
\end{align}
The velocity enjoys full horizontal dissipation $\Delta_\mathrm{h}$, while the magnetic field has only $\partial_1^2$ dissipation. The linear terms $\partial_2 b$ and $\partial_2 u$ arise from the background field and provide a weak stabilizing effect.

Our main theorem states:
\begin{thm}\label{main.Th}
	Let $(u_0,b_0)\in H^2(\mathbb{R}^3)$ be divergence-free. There exists an absolute constant $\varepsilon>0$ such that if
	\begin{align}\label{small}
		\|(u^\mathrm{h}_0,b^\mathrm{h}_0)\|_{H^2}^2
		\exp\Bigl(C\|(u^3_0,b^3_0)\|_{H^2}^2\bigl(1+\|(u^3_0,b^3_0)\|_{H^2}^2\bigr)\Bigr)\leqq\varepsilon,
	\end{align}
	then system \eqref{eq.MHD} admits a unique global solution
	\begin{align*}
		(u,b)\in L^\infty(\mathbb{R}^+;H^2(\mathbb{R}^3)),\quad
		(\nabla_\mathrm{h}u,\partial_1b)\in L^2(\mathbb{R}^+;H^2(\mathbb{R}^3)),\quad
		\partial_2b\in L^2(\mathbb{R}^+;H^1(\mathbb{R}^3)).
	\end{align*}
\end{thm}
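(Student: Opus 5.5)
We argue by local well-posedness in $H^2$ (standard for this hyperbolic--parabolic system, since $\operatorname{div}u=\operatorname{div}b=0$ and the transport terms cancel at top order) combined with a bootstrap built on two decoupled energy functionals. For $N\in\{0,1,2\}$ we set
\begin{align*}
E_\h(t)&\eqdefa\sup_{0\le s\le t}\|(u^\h,b^\h)(s)\|_{H^2}^2+\int_0^t\bigl(\|\nablah u^\h\|_{H^2}^2+\|\partial_1 b^\h\|_{H^2}^2+\|\partial_2 b^\h\|_{H^1}^2\bigr)\,ds,\\
E_3(t)&\eqdefa\sup_{0\le s\le t}\|(u^3,b^3)(s)\|_{H^2}^2+\int_0^t\bigl(\|\nablah u^3\|_{H^2}^2+\|\partial_1 b^3\|_{H^2}^2+\|\partial_2 b^3\|_{H^1}^2\bigr)\,ds .
\end{align*}
The bootstrap hypothesis is $E_\h(t)\le 2C_0\|(u^\h_0,b^\h_0)\|_{H^2}^2\,e^{C(A+A^2)}$ with $A=\|(u^3_0,b^3_0)\|_{H^2}^2$, and we aim to close it with constant $C_0$ under \eqref{small}. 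The key structural facts are the following. First, divergence-free gives $\partial_3u^3=-\operatorname{div}_\h u^\h$ and $\partial_3 b^3=-\operatorname{div}_\h b^\h$, so every vertical derivative of a vertical component is a horizontal derivative of a horizontal one. Second, every nonlinear term in the $(u^3,b^3)$ equation, namely $u\cdot\nabla u^3=u^\h\cdot\nablah u^3+u^3\partial_3u^3$, can thus be written with at least one horizontal factor or one horizontal derivative. Third, the $\partial_2 b$ dissipation is recovered by testing the $u$-equation against $-\partial_2 b$ (and its derivatives up to order one), using $\partial_t b-\partial_2 u=\dots$, which produces $\|\partial_2 b\|_{H^1}^2$ modulo terms controlled by $\|\nablah u\|_{H^1}^2$, $\|\partial_1 b\|_{H^2}^2$ and nonlinearities. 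This is the usual background-field trick, and it explains why only $H^1$ regularity is gained for $\partial_2b$.

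Step 1 is the estimate for the vertical energy. Taking $\partial^\alpha$, $|\alpha|\le2$, of the third components and pairing, the worst terms are of the form $\int\partial^\alpha(u^3\partial_3u^3)\partial^\alpha u^3=-\int\partial^\alpha(u^3\operatorname{div}_\h u^\h)\partial^\alpha u^3$, together with $\partial_3 p$ and $b\cdot\nabla b^3$. Using the anisotropic inequality
\[
\Bigl|\int fgh\Bigr|\lesssim \|f\|^{1/2}\|\partial_3 f\|^{1/2}\|g\|^{1/2}\|\nablah g\|^{1/2}\|h\|^{1/2}\|\nablah h\|^{1/2}
\]
and its $\partial_1$-only variants for the magnetic field, we expect a bound of the form
\[
\frac{d}{dt}\|(u^3,b^3)\|_{H^2}^2+\mathcal D_3\lesssim \bigl(1+\|(u^3,b^3)\|_{H^2}^2\bigr)\,\mathcal D_\h\,\|(u^3,b^3)\|_{H^2}^2+\text{(small)}\,\mathcal D_3 .
\]
Grönwall then gives $E_3\le A\,\exp\bigl(C\int\mathcal D_\h(1+E_3)\bigr)$. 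Because $\int\mathcal D_\h\le E_\h$ is small, this yields $E_3\lesssim A$, and in particular the vertical components stay bounded independently of time.

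Step 2 is the horizontal energy, which must be linear in $E_\h$ with a coefficient carrying $E_3$. The terms $u^3\partial_3 u^\h$ and $b^3\partial_3 b^\h$ are dangerous because $\partial_3$ falls on a horizontal quantity without vertical dissipation. We handle them by integrating by parts in $x_3$ where possible, or by converting them into quantities involving $\partial_3 u^3=-\operatorname{div}_\h u^\h$, and otherwise bounding them by $\|u^3\|_{H^2}\|\nablah u^3\|_{H^2}$ times $E_\h$-type quantities. This produces
\[
\frac{d}{dt}\mathcal E_\h+\mathcal D_\h\le C\bigl(\mathcal D_3+\mathcal D_3\|(u^3,b^3)\|_{H^2}^2\bigr)\mathcal E_\h+C\mathcal E_\h^{1/2}\mathcal D_\h .
\]
Grönwall gives $E_\h\le C\|(u_0^\h,b_0^\h)\|_{H^2}^2\exp\bigl(C(E_3+E_3^2)\bigr)$, and inserting $E_3\lesssim A$ recovers exactly the exponential in \eqref{small} and closes the bootstrap. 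The pressure is split as $p=\Delta^{-1}\operatorname{div}\operatorname{div}(b\otimes b-u\otimes u)$ and treated by the same horizontal/vertical bookkeeping.

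The main obstacle will be the highest-order terms in which only $\partial_1$ dissipation is available for $b$, for instance $\int\partial_2\partial^\alpha b\cdot\ldots$ or $\int\partial_3^2(b^3\partial_3 b^\h)\cdot\partial_3^2 b^\h$. No $\partial_2$ or $\partial_3$ dissipation is available at $H^2$ level for these. We would first attempt the commutator cancellation: the transport term $b^3\partial_3\partial^\alpha b^\h$ cancels against $u\cdot\nabla$ by the divergence-free condition. The remaining terms would then be distributed using $\partial_2 b\in L^2H^1$, obtained from the coupling term, together with one-dimensional Agmon inequalities in $x_1$. If a term still lacks a horizontal derivative, we would add a cross term $\delta\int\partial^\alpha u\cdot\partial_2\partial^\alpha b$ into the energy at lower order to absorb it.

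Uniqueness follows from an $L^2$ estimate of the difference of two solutions, using the $L^2H^2$ integrability of $(\nablah u,\partial_1 b)$ and the anisotropic inequality above.
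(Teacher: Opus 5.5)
There is a genuine gap in Step 2, and it sits at exactly the term the paper treats as its hardest. Your overall architecture matches the paper's: separate energies for $(u^\mathrm{h},b^\mathrm{h})$ and $(u^3,b^3)$, recovery of $\partial_2 b\in L^2_tH^1$ by testing the momentum equation against $\partial_2 b$ and $\partial_i\partial_2 b$, and a bootstrap on the horizontal energy with Gr\"onwall applied first to $E_3$ and then to $E_\mathrm{h}$. The problem arises in the $\partial_3^2$-estimate for $b^\mathrm{h}$:
\begin{itemize}
\item The commutator of $u\cdot\nabla b^\mathrm{h}$ gives, via $\partial_3u^3=-\operatorname{div}_\mathrm{h}u^\mathrm{h}$, the term $2\int(\partial_1u^1+\partial_2u^2)|\partial_3^2b^\mathrm{h}|^2$.
\item The term $b\cdot\nabla u^\mathrm{h}$ gives $\int\partial_3^2b^2\,\partial_2u^\mathrm{h}\cdot\partial_3^2b^\mathrm{h}$.
\end{itemize}
The $\partial_1u^1$ part is harmless, since you can integrate by parts in $x_1$. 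The terms $\int\partial_2u^i\,\partial_3^2b^j\,\partial_3^2b^k$ with $i,j,k\in\{1,2\}$ are not harmless. The two factors $\partial_3^2b^\mathrm{h}$ can together gain at most half a $\partial_1$-derivative through one-dimensional Gagliardo--Nirenberg in $x_1$. You cannot use $\partial_2\partial_3^2 b$, because $\partial_2 b$ is only in $L^2_tH^1$. Any H\"older or anisotropic-interpolation bound is therefore at best $\|\nablah u^\mathrm{h}\|_{H^2}\|\partial_1b^\mathrm{h}\|_{H^2}^{1/2}\|b^\mathrm{h}\|_{H^2}^{3/2}\lesssim\mathcal D_\mathrm{h}^{3/4}\mathcal E_\mathrm{h}^{3/4}$. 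Its dissipative power is $3/2<2$, so it cannot be integrated over $\mathbb{R}^+$ using only $\mathcal E_\mathrm{h}\in L^\infty_t$ and $\mathcal D_\mathrm{h}\in L^1_t$. Hence the cubic bound $C\mathcal E_\mathrm{h}^{1/2}\mathcal D_\mathrm{h}$ you assert in Step 2 is not available for these terms, and your bootstrap does not close.

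Your fallbacks do not reach this term.
\begin{itemize}
\item Your example $\int\partial_3^2(b^3\partial_3b^\mathrm{h})\cdot\partial_3^2b^\mathrm{h}$ does not occur, because the induction equation contains no $b\cdot\nabla b$.
\item The only top-order cancellation pairs $b\cdot\nabla\partial^\alpha b^\mathrm{h}$ tested on $\partial^\alpha u^\mathrm{h}$ with $b\cdot\nabla\partial^\alpha u^\mathrm{h}$ tested on $\partial^\alpha b^\mathrm{h}$. It leaves the commutator terms above untouched.
\item The cross term $\delta\int\partial^\alpha u\cdot\partial_2\partial^\alpha b$ for $|\alpha|\le1$ only yields $\partial_2 b\in L^2_tH^1$, which you already have. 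For $|\alpha|=2$, its time derivative contains $\int\partial^\alpha\Delta_\mathrm{h}u\cdot\partial_2\partial^\alpha b$ and $\int\partial_2\partial^\alpha u\cdot\partial_1^2\partial^\alpha b$, which lose a derivative in the $H^2$ framework.
\end{itemize}

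The missing idea is the paper's normal-form-type use of the background field (Lemma \ref{wild}).
\begin{enumerate}
\item Substitute $\partial_2u^i=\partial_tb^i+u\cdot\nabla b^i-\partial_1^2b^i-b\cdot\nabla u^i$ from the induction equation. The bad integral becomes $\frac{\mathrm{d}}{\mathrm{d}t}\int b^i\partial_3^2b^j\partial_3^2b^k$ plus terms in which $\partial_t\partial_3^2b^j$ is again replaced using the equation.
\item In the result, the transport contributions recombine into $\int u\cdot\nabla(b^i\partial_3^2b^j\partial_3^2b^k)=0$. The contributions from $\partial_1^2 b$ and $\partial_2 u$ now carry enough dissipation.
\item The remaining term $-\int b^i\partial_3^2b^k\,b\cdot\nabla\partial_3^2u^j$ is handled by a second iteration through the momentum equation, $b\cdot\nabla b=\partial_tu+u\cdot\nabla u-\Delta_\mathrm{h}u+\nabla p-\partial_2b$. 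This produces the further correction $-\frac{\mathrm{d}}{\mathrm{d}t}\int b^i\partial_3^2u^j\partial_3^2u^k$, and the transport terms again cancel after integration by parts.
\end{enumerate}
Because all indices are horizontal, the cubic correction $\int b^i(\partial_3^2b^j\partial_3^2b^k-\partial_3^2u^j\partial_3^2u^k)$ is $O(\|(u^\mathrm{h},b^\mathrm{h})\|_{H^2}^3)$. The modified energy therefore stays equivalent to $\|(u^\mathrm{h},b^\mathrm{h})\|_{H^2}^2$. This is where horizontal-only smallness and the choice $B^*=(0,1,0)$ are genuinely used, and nothing in your plan substitutes for it.
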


\noindent \textbf{Key Innovations and Contributions:}
\begin{itemize}
	\item \textbf{Large vertical components.} Condition \eqref{small} allows the vertical components $u^3_0$ and $b^3_0$ to be arbitrarily large in $H^2(\mathbb{R}^3)$; only the horizontal part $(u^\mathrm{h}_0,b^\mathrm{h}_0)$ must be small. As far as we know, this is the first global well‑posedness result for a three‑dimensional anisotropic MHD system with mixed partial dissipation that permits large data in this component‑wise sense.
	\item \textbf{Optimal dissipation pattern.} The chosen dissipation $(\Delta_{\mathrm{h}} u,\; \partial_1^2 b)$ combined with the background field $B^*=(0,1,0)$ is shown to be critical: it generates exactly the weak dissipation needed to control the most dangerous nonlinear terms while still allowing the vertical components to be large.
	\item \textbf{New analytical framework.} We develop a refined energy method that decouples the horizontal and vertical dynamics, treats the pressure terms in a component‑aware fashion, and handles intricate nonlinearities through a combination of anisotropic interpolation, iterative use of the equations, and subtle cancellations.
\end{itemize}


\subsection{Strategy and Innovations}

The proof of Theorem \ref{main.Th} is built upon a novel \emph{component-decoupled energy method} that systematically handles the distinct roles played by horizontal and vertical components. This approach is necessitated by the fundamental tension in our problem: while the horizontal components $(u^\mathrm{h}, b^\mathrm{h})$ are subject to a smallness condition, the vertical components $(u^3, b^3)$ can be arbitrarily large. The strategy unfolds in three main stages, each addressing a core analytical challenge.

\subsubsection{Stage 1: Decoupled Energy Framework}

We construct two separate energy functionals: one for the vertical components and one for the horizontal components, and derive their evolution equations independently. This decoupling is natural due to the anisotropic structure of the dissipation, which acts as $\Delta_\mathrm{h}$ on $u$ but only $\partial_1^2$ on $b$, and the fact that the smallness assumption is imposed solely on horizontal variations. Moreover, the incompressibility conditions $\partial_3 u^3 = -\mathrm{div}_\mathrm{h} u^\mathrm{h}$ and $\partial_3 b^3 = -\mathrm{div}_\mathrm{h} b^\mathrm{h}$ link vertical derivatives to horizontal components, allowing the evolution of vertical quantities to be expressed in terms of horizontal ones.


For instance, a detailed decomposition of the critical term $I_2$ of \eqref{en_H2_v} from the vertical energy estimate reveals:
\begin{align*}
	I_2=&\sum_{i,j=1}^3\int_{\R^3}\partial_i^2b^j\partial_jb^3\partial_i^2u^3+2\partial_ib^j\partial_j\partial_ib^3\partial_i^2u^3	\dx.
\end{align*}
Essentially,   vertical derivatives like $\partial_3^2 u^3$ are expressed via $\partial_3^2 u^3 = -\partial_3\mathrm{div}_\mathrm{h} u^\mathrm{h}$, transferring the regularity requirement to the (small) horizontal components. This \emph{derivative transfer mechanism} is systematically applied to all high-order vertical terms. A meticulous rearrangement using the same method allows us to express $I_2$ as a sum of terms where at most one high-order vertical derivative appears on any single factor. This structure leads to the simplified bound
\begin{align}\label{I2deco}
	I_2\leqq &C\int_{\R^3}|\nablah^2b^3\nablah b^\h \nablah^2 u^3|+|\nablah^2b^\h\nablah b^3 \nablah^2 u^3|+|\partial_3^2b^\h\nablah b^3 \nablah\partial_3 u^\h|\dx\nonumber\\
	&+C\int_{\R^3}|\nablah\nabla b^\h\nabla b^\h \nablah \nabla u^\h|\dx.
\end{align}
Consequently, we find that the vertical energy satisfies
\begin{align*}
	\frac12\frac{\mathrm{d}}{\dt}\|(u^3,b^3)\|_{H^2}^2 + \|(\nabla_\mathrm{h} u^3,\partial_1b^3)\|_{H^2}^2 \lesssim \text{(nonlinear terms)} + \text{(pressure terms)}.
\end{align*}
The terms in the ``nonlinear terms" share a similar structure with \eqref{I2deco}. They can either be controlled by the vertical energy itself or be handled under the smallness condition in the horizontal direction.
\subsubsection{Stage 2: Pressure Analysis}

The pressure term $-\nabla p$ presents a unique difficulty in the decoupled framework. In the full energy estimate, integration by parts combined with $\mathrm{div} u = 0$ eliminates $\int_{\R^3} u\cdot\nabla p \dx$. When estimating components separately, this cancellation no longer occurs. Instead, we have the identity
\begin{align}\label{p_same}
	-(\nabla_\mathrm{h} p  |u^\mathrm{h})_{L^2} = (p  |\mathrm{div}_\mathrm{h} u^\mathrm{h})_{L^2} = -(p  |\partial_3 u^3)_{L^2} = (\partial_3 p | u^3)_{L^2},
\end{align}
revealing that the pressure terms appearing in the horizontal and vertical energy balances are intrinsically linked. Namely, they are essentially the same quantity manifested in different components. It follows that any estimate for the pressure must simultaneously serve both the horizontal and vertical energy budgets. Yet, our smallness assumption applies only to the horizontal components $(u^\h, b^\h)$, preventing a unified control of the pressure term via a single bound. This dichotomy forces us to develop two distinct, component-specific estimates for the pressure (e.g., \eqref{A2(v)} and \eqref{A2(h)}).

To estimate $p$, we solve the pressure equation
\begin{align}\label{ppp}
	-\Delta p = \mathrm{div}\,\mathrm{div}(u\otimes u - b\otimes b),
\end{align}
and obtain explicit representations involving Riesz transforms. The key difficulty arises from terms like
\begin{align}\label{a3}
	\big((-\Delta)^{-1}\partial_3^2(u^3)^2 | \mathrm{div}_\mathrm{h} u^\mathrm{h}\big)_{L^2}.
\end{align}
As mentioned before, we need to obtain an estimate that both ensures a closing estimate for the vertical energy and guarantees a closing estimate for the horizontal energy under the condition \eqref{small}. While the former objective is straightforward, for the latter, the desired ideal estimate we aim for is
\begin{align*}
	\|\nablah u^3\|_{H^2}\|\nablah u^\h\|_{H^2}\|u^\h\|_{H^2} \quad\mathrm{or}\quad \|\nablah u^\h\|_{H^2}^2\|u^\h\|_{H^2}^\frac12\|u^3\|_{H^2}^\frac12.
\end{align*}
Compared with the two expressions above, one observes that \eqref{a3} contains too many vertical component $u^3$. This observation naturally leads us to employ the incompressibility condition to extract the horizontal component $u^\h$, and thus we turn to consider
\begin{align*}
	A_3 = 2\big((-\Delta)^{-1}\partial_3(u^3 \mathrm{div}_\mathrm{h} u^\mathrm{h}) \mid \mathrm{div}_\mathrm{h} u^\mathrm{h}\big)_{L^2}.
\end{align*}
This, however, cannot be simply derived from the following basic one-dimensional inequality alone
\begin{align}\label{L^infty_1D}
	\|f\|_{L^\infty}\leqq \sqrt{2}\|f\|_{L^2}^\frac12\|f'\|_{L^2}^\frac12, \quad\forall\ \  f\in H^1(\mathbb{R}).
\end{align}
We overcome this by employing the boundedness of the Riesz operator $(-\Delta)^{-1}\partial_i\partial_j$ with $i,j=1,2,3$ in $L_\text{h}^p(\R_{x_1}\times\R_{x_2};L_\text{v}^q(\R_{x_3}))$ spaces, where $1<p,q<\infty$, and combining the horizontal Gagliardo--Nirenberg inequality
\begin{align}\label{GNS2d}
	\|f\|_{L^4_\mathrm{h}(\mathbb{R}^2)} \lesssim \|f\|_{L^2_\mathrm{h}(\mathbb{R}^2)}^{1/2}\|\nabla_\mathrm{h} f\|_{L^2_\mathrm{h}(\mathbb{R}^2)}^{1/2},
\end{align}
which allows us to trade integrability for derivatives in the horizontal variables.

We emphasize that the term $A_6=\big((-\Delta)^{-1}\partial_3^2(b^3)^2 | \mathrm{div}_\mathrm{h} u^\mathrm{h}\big)_{L^2}$ must also be addressed with the same method, making our proposed dissipation pattern and background magnetic field essential.

\subsubsection{Stage 3: Nonlinear Term Analysis via Iterative Structure Exploitation}

The most severe analytical challenges reside in controlling high-order nonlinear terms within the estimate for $\|b^\mathrm{h}\|_{H^2}$. Specifically, the following terms create essential difficulties in the derivation of appropriate a priori bounds
\begin{align*}
	\Big(\partial_i^2(u\cdot\nabla b^\h)|\partial_i^2 b^\h\Big)_{L^2}=&\Big(2\partial_{3}u^{3} \partial_{3}^2b^\h|\partial_{3}^2b^\h\Big)_{L^2}+ \text{good terms}\\
	=& \text{ good terms}-2\Big(\partial_{2}u^{2} \partial_{3}^2b^\h|\partial_{3}^2b^\h\Big)_{L^2},\\
	\Big(\partial_i^2(b\cdot\nabla u^\h)|\partial_i^2 b^\h\Big)_{L^2}=&\Big(\partial_{i}^2b^{\mathrm{h}}\cdot\nabla_{\mathrm{h}} u^\h|\partial_{i}^2b^\h\Big)_{L^2}+\text{good terms}\\
	=& \text{ good terms}+\Big(\partial_{3}^2b^2\partial_2u^\h|\partial_{3}^2b^\h\Big)_{L^2}.
\end{align*}
In order to handle these two terms simultaneously, we estimate the following more general form:
\begin{align}\label{W1}
	W_1 \eqdefa \int_{\R^3} \partial_2 u^i  \partial_3^2 b^j \partial_3^2 b^k \dx \quad \mathrm{with}\quad i,j,k=1,2.
\end{align}
It is easy to observe that \eqref{W1} cannot be controlled directly via anisotropic Sobolev inequalities because $b$ possesses only $\partial_1^2$ dissipation, making $\partial_3^2 b^\mathrm{h}$ a ``dangerous'' factor. Our breakthrough comes from a iterative procedure: using the magnetic equation $ \partial_t b + u\cdot\nabla b - \partial_1^2 b = b\cdot\nabla u + \partial_2 u$, we rewrite $\partial_2 u$ as
\begin{align}\label{ha}
	\partial_2 u = \partial_t b + u\cdot\nabla b - \partial_1^2 b - b\cdot\nabla u.
\end{align}
Substituting this into \eqref{W1} transfers one time derivative or one $\partial_1^2$ dissipation onto the critical factors
\begin{align}\label{W11}
	W_1&=\int_{\R^3}({\partial_t}b^i+u\cdot\nabla{b}^i-\partial_1^2b^i -b\cdot\nabla{u}^i)\partial_3^2b^j\partial_3^2b^k\dx \nonumber\\
	&=\int_{\R^3}\frac{\mathrm{d}}{\dt}(b^i\partial_3^2b^j\partial_3^2b^k)-b^i\partial_t\partial_3^2b^j\partial_3^2b^k-b^i\partial_t\partial_3^2b^k\partial_3^2b^j\dx +\text{other terms},
\end{align}
The time derivative $\partial_t\partial_3^2 b^j$ (resp. $\partial_t\partial_3^2 b^k$) is then replaced using the equation again, creating an iterative structure. Combining \eqref{ha} and \eqref{W11}, we observe that
\begin{align}\label{W111}
	W_1&=\text{other terms}+\int_{\R^3} b^i\partial_3^2b^k\partial_3^2(u\cdot\nabla{b}^j-\partial_1^2b^j -b\cdot\nabla{u}^j-\partial_2u^j)\dx\nonumber\\
	&\quad+\int_{\R^3} b^i\partial_3^2b^j\partial_3^2(u\cdot\nabla{b}^k-\partial_1^2b^k -b\cdot\nabla{u}^k-\partial_2u^k)\dx\nonumber\\
	&=\text{good terms}-\int_{\R^3} b^i\partial_3^2b^k b\cdot\nabla\partial_3^2u^j+b^i\partial_3^2b^jb\cdot\nabla\partial_3^2u^k\dx.
\end{align}
the ``good terms'' that emerge after iteration contain the quartic products factors like $\partial_3^2 b^\mathrm{h} \partial_3^2 b^\mathrm{h}$. We introduce a key interpolation lemma
\begin{lem}\label{tool}
	The following estimates hold when the right-hand terms are all bounded. If $i,j,k,l=1,2,3$ and $(m,n)=(1,2)$ or $(2,1)$, we have
	\begin{align} \label{yi}
		\|f\cdot g\|_{L^2}&\lesssim\|\partial_1 f\|_{H^1}^\frac12\|\partial_2 g\|_{L^2}^\frac12\|f\|_{H^1}^\frac12\|g\|_{L^2}^\frac12,\\
		\label{er}\int_{\R^3}\partial_i\partial_jf\cdot\partial_kg\cdot\partial_lh\dx&\lesssim\|\partial_1f\|_{H^2}^\frac12\|\partial_if\|_{H^1}^\frac12\|\partial_kg\|_{H^1}\|\partial_lh\|_{H^1}^\frac12\|h\|_{H^1}^\frac12,\\\label{san}
		\int_{\R^3}f\cdot g\cdot\partial_i\partial_jh\cdot\partial_k\partial_lv\dx&\lesssim\|\partial_2f\|_{H^1}^\frac12\|f\|_{H^1}^\frac12\|\partial_mg\|_{H^1}^\frac12\|g\|_{H^1}^\frac12\nonumber\\
		&\quad \times\|\partial_1h\|_{H^2}^\frac12\|h\|_{H^2}^\frac12\|\partial_nv\|_{H^2}^\frac12\|v\|_{H^2}^\frac12.
	\end{align}
\end{lem}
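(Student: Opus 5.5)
We prove the three inequalities of Lemma \ref{tool} by the same mechanism. In each variable direction we apply the one-dimensional inequality \eqref{L^infty_1D} (or its $L^2$ counterpart) to whichever factor carries the extra derivative in that direction. The pieces are then assembled with H\"older's inequality in mixed-norm spaces $L^p_{x_a}L^q_{x_b}L^r_{x_c}$ and Minkowski's inequality to reorder the norms.

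\emph{Inequality \eqref{yi}.} Put $f$ in $L^\infty_{x_1}$ and $g$ in $L^\infty_{x_2}$, keeping $L^2$ in the other directions:
\begin{equation*}
\|fg\|_{L^2}\le \Bigl\|\,\|f\|_{L^\infty_{x_1}}\,\|g\|_{L^\infty_{x_2}}\Bigr\|_{L^2_{x_1x_2x_3}} .
\end{equation*}
Since $\|f\|_{L^\infty_{x_1}}$ no longer depends on $x_1$ and $\|g\|_{L^\infty_{x_2}}$ no longer depends on $x_2$, the $L^2_{x_1x_2}$ norm factors. For fixed $x_3$ this gives
\begin{equation*}
\bigl\|\,\|f\|_{L^\infty_{x_1}}\bigr\|_{L^2_{x_2}}\,\bigl\|\,\|g\|_{L^\infty_{x_2}}\bigr\|_{L^2_{x_1}} .
\end{equation*}
We then take $L^\infty_{x_3}$ on the $f$ factor and $L^2_{x_3}$ on the $g$ factor.
\begin{itemize}
\item For $g$, \eqref{L^infty_1D} in $x_2$ and Cauchy--Schwarz give $\|g\|_{L^2}^{1/2}\|\partial_2 g\|_{L^2}^{1/2}$.
\item For $f$, apply \eqref{L^infty_1D} first in $x_1$, then use Minkowski to move $L^\infty_{x_3}$ inside $L^2_{x_2}$, and apply \eqref{L^infty_1D} again in $x_3$. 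This gives $\|f\|_{H^1}^{1/2}\|\partial_1 f\|_{H^1}^{1/2}$, because the $x_3$ step produces terms like $\|f\|^{1/4}\|\partial_3 f\|^{1/4}\|\partial_1f\|^{1/4}\|\partial_1\partial_3 f\|^{1/4}$.
\end{itemize}
This proves \eqref{yi}.

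\emph{Inequality \eqref{er}.} Write the integral as $\int \partial_j(\partial_i f)\,\partial_k g\,\partial_l h\,dx$. By H\"older,
\begin{equation*}
\int |\partial_i\partial_j f|\,|\partial_k g|\,|\partial_l h|\,dx\le \|\partial_i\partial_j f\cdot \partial_l h\|_{L^2}\,\|\partial_k g\|_{L^2}.
\end{equation*}
Apply \eqref{yi} with $F=\partial_i\partial_j f$ (sup in $x_1$) and $G=\partial_l h$ (sup in $x_2$). This yields $\|\partial_1\partial_i\partial_j f\|_{H^1}^{1/2}\|\partial_i\partial_j f\|_{H^1}^{1/2}$, which is bounded by $\|\partial_1 f\|_{H^2}^{1/2}\|\partial_i f\|_{H^1}^{1/2}$ up to how the $H^1$ norm is counted. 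The obstacle is that this needs $\|\partial_i f\|_{H^2}$ rather than $\|\partial_i f\|_{H^1}$. The $h$ factor gives $\|\partial_l h\|^{1/2}\|\partial_2\partial_l h\|^{1/2}$, which matches the right-hand side with $\|h\|_{H^1}$ and $\|\partial_l h\|_{H^1}$. To repair the count for $f$, redistribute the sup directions: put $L^\infty_{x_1}$ on $\partial_i\partial_jf$ only (costing $\|\partial_1\partial_i\partial_j f\|^{1/2}\|\partial_i\partial_j f\|^{1/2}$, i.e.\ $\|\partial_1 f\|_{H^2}^{1/2}\|\partial_i f\|_{H^1}^{1/2}$). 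Put $L^\infty_{x_2}$ on $\partial_k g$, which costs at most $\|\partial_k g\|_{H^1}$, and put $L^\infty_{x_3}$ on $\partial_l h$, costing $\|\partial_l h\|_{H^1}^{1/2}\|h\|_{H^1}^{1/2}$ after interpolating in $x_3$. Splitting the sup among three factors in three directions gives exactly the exponents stated. Each factor is then estimated in $L^2$ of the remaining variables through \eqref{L^infty_1D} and Minkowski.

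\emph{Inequality \eqref{san}.} Use the same directional distribution over the four factors. Put $\partial_i\partial_jh$ in $L^\infty_{x_1}$ (cost $\|\partial_1 h\|_{H^2}^{1/2}\|h\|_{H^2}^{1/2}$) and $\partial_k\partial_l v$ in $L^\infty_{x_n}$ (cost $\|\partial_n v\|_{H^2}^{1/2}\|v\|_{H^2}^{1/2}$). Put $f$ in $L^\infty_{x_2}$ and $g$ in $L^\infty_{x_m}$, each with one $L^\infty_{x_3}$ as well. Since $f,g$ may be sup'ed in two directions at the price of an $H^1$ interpolation, the sup in $x_3$ is absorbed by $\|\cdot\|_{H^1}$. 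The remaining integrability pairs as $L^2\times L^2$ in each direction: for instance $\|f\|_{L^\infty_{x_2}L^\infty_{x_3}L^2_{x_1}}\cdot\|g\|_{L^\infty_{x_m}L^2}$ combined with the $L^2$ norms of the two high-order factors. The main obstacle is the bookkeeping. One must check that each direction carries exactly two $L^2$ factors and the rest $L^\infty$, so that H\"older closes. One must also order the mixed norms so that Minkowski's inequality ($L^2_a L^\infty_b \ge L^\infty_b L^2_a$ in the right direction) is always applicable. I would tabulate the direction assignments for the cases $(m,n)=(1,2)$ and $(2,1)$ before writing the estimate.
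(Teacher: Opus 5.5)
Your proposal is correct and follows the paper's proof. For \eqref{yi} you put $f$ in $L^\infty_{x_1}L^\infty_{x_3}$ and $g$ in $L^\infty_{x_2}$, and for \eqref{er} your final redistributed assignment of $L^\infty_{x_1},L^\infty_{x_2},L^\infty_{x_3}$ to $\partial_i\partial_jf,\partial_kg,\partial_lh$ is exactly the paper's H\"older splitting; you can drop the preliminary detour through \eqref{yi}, whose intermediate bound does not actually have the claimed form. For \eqref{san}, which the paper omits as ``the same method,'' your assignment ($\partial_i\partial_jh$ sup'ed in $x_1$, $\partial_k\partial_lv$ in $x_n$, $f$ in $x_2,x_3$, $g$ in $x_m,x_3$) gives exactly two $L^\infty$ and two $L^2$ factors in every direction for both $(m,n)=(1,2)$ and $(2,1)$, and the ordering concern is harmless because moving $L^\infty$ inward only increases a mixed norm.
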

\begin{proof}
	By using H\"older's inequality and \eqref{L^infty_1D}, we find that
	\begin{align*}
		\|f\cdot g\|_{L^2}&\lesssim\|f\|_{L^\infty_{x_1}L^2_{x_2}L^\infty_{x_3}}\| g\|_{L^2_{x_1}L^\infty_{x_2}L^2_{x_3}}\lesssim\|\partial_1 f\|_{H^1}^\frac12\|\partial_2 g\|_{L^2}^\frac12\|f\|_{H^1}^\frac12\|g\|_{L^2}^\frac12,
	\end{align*}
	and
	\begin{align*}
		\int_{\R^3}\partial_i\partial_jf\cdot\partial_kg\cdot\partial_lh\dx&\lesssim\|\partial_i\partial_jf\|_{L^\infty_{x_1}L^2_{x_2}L^2_{x_3}}\|\partial_kg\|_{L^2_{x_1}L^\infty_{x_2}L^2_{x_3}}\|\partial_lh\|_{L^2_{x_1}L^2_{x_2}L^\infty_{x_3}}\\
		&\lesssim\|\partial_1f\|_{H^2}^\frac12\|\partial_if\|_{H^1}^\frac12\|\partial_kg\|_{H^1}\|\partial_lh\|_{H^1}^\frac12\|h\|_{H^1}^\frac12.
	\end{align*}
	We complete the proof of \eqref{yi} and \eqref{er}. The proof of \eqref{san} follows the same method, and we omit the details here.
\end{proof}
It provides the precise anisotropic estimates needed to distribute derivatives optimally. For example, \eqref{san} allows us to place the ``missing'' derivative $\partial_1$ (the only one with dissipation for $b$) on the most dangerous factors. After the above steps, we encounter the term
\begin{align*}
	W_2\eqdefa -\int_{\R^3} b^i\partial_3^2b^k b\cdot\nabla\partial_3^2u^j+b^i\partial_3^2b^jb\cdot\nabla\partial_3^2u^k\dx,
\end{align*}
which contains the derivative of $u^j$ (resp. $u^k$) involving a third order. By systematically applying derivative rules (see \eqref{S21deco} for details), this gives
\begin{align*}
	W_2=\text{good terms}+\int_{\R^3}b^i\partial_3^2u^j\partial_3^2(b\cdot\nabla b^k)+b^i\partial_3^2u^k\partial_3^2(b\cdot\nabla b^j)\dx.
\end{align*}
And then, we recall
\begin{align}\label{bbbb}
	b\cdot\nabla b=\partial_tu+u\cdot\nabla u-\Delta_\h u+\nabla p-\partial_2b.
\end{align}
Combining the above equation and iterating the argument, we obtain
\begin{align}\label{W2}
	W_2=&\text{good terms}+\int_{\mathbb{R}^3} b^i \partial_3^2 u^j u\cdot\nabla\partial_3^2 u^k  +  b^i \partial_3^2 u^k u\cdot\nabla\partial_3^2 u^j \dx.
\end{align}
Using integration by parts, the second term of \eqref{W2} combines to
\begin{align*}
	-\int_{\mathbb{R}^3} u\cdot\nabla b^i \partial_3^2 u^j \partial_3^2 u^k \dx,
\end{align*}
which is of lower differential order and can be controlled using available bounds. This final cancellation exploits the symmetric structure of the coupled system.

\subsubsection{Innovative Elements}

Our approach introduces several technical elements:

\begin{enumerate}
	\item \textbf{Component-decoupled energy method:} A systematic framework for handling different smallness conditions on different components.
	
	\item \textbf{Pressure term duality:} The recognition that pressure terms in horizontal and vertical balances are dual expressions of the same quantity, requiring compatible but distinct estimates.
	
	\item \textbf{Iterative nonlinear analysis:} A multi-step procedure that repeatedly uses the equations to transfer derivatives, ultimately revealing hidden cancellations.
	
	\item \textbf{Anisotropic interpolation lemma:} Lemma \ref{tool} provides a suite of inequalities specifically tailored to our dissipation pattern, enabling precise derivative allocation.
	
	\item \textbf{Background field as a structural tool:} The linear terms $\partial_2 b$ and $\partial_2 u$ not only provide weak dissipation but also enable the key substitution $\partial_2 u = \partial_t b + \cdots$ that initiates the iterative control of dangerous nonlinearities.
\end{enumerate}

This comprehensive strategy allows us to close the estimates under condition \eqref{small}, establishing global well-posedness with large vertical data. This is a result previously inaccessible to existing methods.

\subsection{Comparison with Previous Works and Originality}

The improvement of our result becomes particularly clear when placed in context with existing literature. Unlike previous studies on anisotropic MHD systems with background magnetic fields, our work achieves a significant relaxation of the smallness requirement while maintaining minimal dissipation assumptions. This advancement not only solves a long-standing open problem in the specific context of \eqref{eq.MHD}, but also provides a blueprint for analyzing component-wise smallness conditions in other coupled PDE systems with anisotropic dissipation.

\subsubsection{Contrast with Fully Dissipative MHD}
The fully dissipative case ($\mu_i,\nu_i>0$ for all $i=1,2,3$) allows for standard energy estimates where all components enjoy dissipation. In such settings, global well-posedness for small data follows from relatively straightforward modifications of Navier-Stokes techniques \cite{72lions,83sermange}. More recent breakthroughs, such as \cite{2023Nacer}, established global existence for large data with small unidirectional derivatives, but still relied on full dissipation.
\subsubsection{Contrast with Partially Dissipative MHD}
For partially dissipative systems (only velocity dissipation or only magnetic diffusion), notable progress was made by Lin et al. \cite{15Lin} and later by several other authors, but all under smallness of the full initial data. Further advances (including 2D settings) have been achieved for MHD systems with only viscosity or magnetic diffusion (see, e.g., \cite{2017abidi,2018cai,18he,19jiang,15Lin2,18pan,14Ren,18tan,17wei,15wu,15Xu,18zhou}).


\subsubsection{Contrast with Mixed Dissipative MHD Requiring Full Smallness}
For MHD systems with mixed partial dissipation, previous results all required the full initial data $(u_0,b_0)$ to be small. For instance:
\begin{itemize}
	\item With dissipation $(\Delta_\mathrm{h}u,\partial_3^2b)$ and background field $B^*=(1,0,0)$, \cite{24han,21wu} proved global well-posedness assuming $\|(u_0,b_0)\|_{H^s(\R^3)}$ is small ($s=2$ or $3$).
	\item With dissipation $(\partial_1^2u,\Delta_\mathrm{h}b)$ and $B^*=(0,1,0)$, \cite{23Lin,25Lin,26wu} established similar results under full smallness conditions.
\end{itemize}
All these works share a common limitation: the vertical components $u^3_0$ and $b^3_0$ must be as small as the horizontal ones. By making energy estimates far more delicate and necessitating the novel component-decoupled approach described above, our Theorem \ref{main.Th} breaks this pattern by allowing $\|(u^3_0,b^3_0)\|_{H^2(\R^3)}$ to be arbitrarily large. This represents a major advancement toward simulating physically realistic regimes in which vertical motions (e.g., convection cells, plumes) can reach substantial amplitudes.

\subsubsection{Why Our Configuration is Critical}
The specific choice of dissipation $(\Delta_\mathrm{h}u,\partial_1^2b)$ with background field $B^*=(0,1,0)$ is not arbitrary; it represents a \emph{critical} configuration that permits the horizontal-only smallness condition. To understand this, consider the alternative pattern $(\Delta_\mathrm{h}u,\partial_3^2b)$ with $B^*=(1,0,0)$ studied in \cite{21wu}. There, the dangerous nonlinear terms in the $b^\mathrm{h}$ estimate take the form
\begin{align*}
	\int_{\R^3}\partial_1u^1\partial_2^2b^1\partial_2^2b^1\dx,\quad \int_{\R^3}\partial_1u^1\partial_2^2b^3\partial_2^2b^3\dx, \quad\mathrm{and}\quad\int_{\R^3}\partial_1u^3\partial_2^2b^1\partial_2^2b^3\dx,
\end{align*}
which involve $\partial_2^2 b^3$. Since $b^3$ lacks dissipation in this configuration, controlling these terms forces $b^3$ to be small. In contrast, for our system \eqref{eq.MHD}, the analogous problematic terms become
\begin{align*}
	\int_{\R^3}\partial_{2}u^{2} \partial_{3}^2b^\h\partial_{3}^2b^\h\dx,\quad\mathrm{and}\quad\int_{\R^3}\partial_{3}^2b^2\partial_2u^\h\partial_{3}^2b^\h \dx,
\end{align*}
which involve only $\partial_3^2 b^\mathrm{h}$. Importantly, $b^\mathrm{h}$ possesses $\partial_1^2$ dissipation, and the factors $\partial_3^2 b^\mathrm{h}$ can be handled through our iterative procedure that leverages the background field structure. This structural difference explains why we can impose smallness only on the horizontal components, whereas other dissipation patterns inherently require full smallness.

\subsubsection{Relation to Navier-Stokes with Anisotropic Dissipation}
When $b\equiv 0$, system \eqref{eq.MHD} reduces to the Navier-Stokes equations with only horizontal dissipation. There are substantial results on the well-posedness (see,  e.g.,\cite{00chemin,02if,20Liu,05paicu,11paicu,10zhang}). It is worth noting that Chemin et al. \cite{07chemin} proved global well-posedness under a horizontal smallness condition. However, the techniques developed for Navier-Stokes fail dramatically in the MHD context due to the strong coupling between $u$ and $b$. The magnetic field introduces additional nonlinearities and wave-type interactions that disrupt the energy-coupling mechanisms available in the fluid-only case. Our component-decoupled method and iterative nonlinear analysis are specifically designed to overcome these MHD-specific challenges.

\subsection*{Outline}
{Section \ref{2}} completes the proof of Theorem~\ref{main.Th} by synthesizing the horizontal and vertical estimates.
{Section \ref{3}} derives the global energy bound for $(u^3,b^3)$.
{Section \ref{4}} establishes the global estimate for $(u^\mathrm{h},b^\mathrm{h})$.
{Appendix A} provides a detailed analysis of the most subtle nonlinear term ($J_{325}$), illustrating the iterative use of the equations and the cancellations that close the estimates.

\noindent\textbf{Notation.} $H^s=H^s(\mathbb{R}^3)$, $L^2=L^2(\mathbb{R}^3)$. We write $a\lesssim b$ if $a\leqq Cb$ for a universal constant $C$. The $L^2$ inner product is denoted $(\cdot\mid\cdot)_{L^2}$, and $\|(f,g)\|=\|f\|+\|g\|$. We set $\nabla_\mathrm{h}=(\partial_1,\partial_2)$, $\mathrm{div}_\mathrm{h}=\partial_1+\partial_2$, $\Delta_\mathrm{h}=\partial_1^2+\partial_2^2$.

\section{Proof of the Main Theorem}\label{2}

We begin by establishing the two core a priori estimates for the horizontal and vertical components, which form the foundation of the proof.

\begin{prop}\label{v}
	Under the assumptions of Theorem \ref{main.Th}, the vertical components $(u^3,b^3)$ satisfy
	\begin{align}\label{u3b3H2}
		\frac12 \frac{\mathrm{d}}{\dt}\|(u^3,b^3)\|^2_{H^2}+\|(\nabla_{\mathrm{h}} u^3,\partial_1b^3)\|^2_{H^2}
		\leqq &\frac{9}{100} \|(\nabla_{\mathrm{h}} u^3,\partial_1b^3)\|_{H^2}^2+\frac{1}{25}\|(\partial_2b^\mathrm{h},\partial_2b^3)\|_{H^1}^2 \nonumber\\
		&+ \|(\nabla_{\mathrm{h}} u^\mathrm{h},\partial_1b^\mathrm{h})\|_{H^2}^2\|(u^\mathrm{h},b^\mathrm{h})\|_{H^2}^2\nonumber\\
		&+C\Bigl(\|\nabla_{\mathrm{h}} u^3\|_{H^2}^2+\|(\partial_2b^\mathrm{h},\partial_2b^3)\|_{H^1}^2\Bigr)\|(u^\mathrm{h},b^\mathrm{h})\|_{H^2}^2\nonumber\\
		&+C\|(\nabla_{\mathrm{h}} u^\mathrm{h},\partial_1b^\mathrm{h})\|_{H^{2}}^2\|(u^3,b^3)\|_{H^2}^2,
	\end{align}
	and the auxiliary estimate
	\begin{align}\label{p2bH1}
		\|(\partial_2b^\mathrm{h},\partial_2b^3)\|_{H^1}^2\leqq &\frac{\mathrm{d}}{\mathrm{d}t}\int_{\R^3}\bigl(u^\mathrm{h}\partial_2 b^\mathrm{h}+u^3\partial_2 b^3+\partial_{i}u^\mathrm{h}\partial_{i}\partial_{2}b^\mathrm{h}+\partial_{i}u^3\partial_{i}\partial_{2}b^3\bigr)\dx\nonumber\\
		&+\frac{11}{20}\|(\partial_{2}b^\mathrm{h},\partial_{2}b^3)\|_{H^1}^2+\frac{41}{10}\|\nabla_{\mathrm{h}} u^3\|_{H^2}^2+\|\partial_1b^3\|_{H^2}\nonumber\\
		&+C\|(\nabla_{\mathrm{h}} u^\mathrm{h},\partial_{1}b^\mathrm{h})\|_{H^{2}}^2\bigl(1+\|( u^\mathrm{h}, b^\mathrm{h})\|_{H^2}^2\bigr)\nonumber\\
		&+C\Bigl(\|(\nabla_{\mathrm{h}} u^3,\partial_1b^3)\|_{H^2}^2+\|(\partial_2b^\mathrm{h},\partial_2b^3)\|_{H^{1}}^2\Bigr)\|(u^\mathrm{h},b^\mathrm{h})\|_{H^{2}}^2\nonumber\\
		&+C\|(\nabla_{\mathrm{h}} u^\mathrm{h},\partial_1b^\mathrm{h})\|_{H^{2}}^2\|(u^3,b^3)\|_{H^2}^2,
	\end{align}
	where the summation over $i=1,2,3$ is understood.
\end{prop}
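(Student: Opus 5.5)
For \eqref{u3b3H2} I will apply $\partial^\alpha$ with $|\alpha|\le2$ to the third components of \eqref{eq.MHD} and pair with $\partial^\alpha u^3$, $\partial^\alpha b^3$. The dissipation gives $\|(\nabla_\h u^3,\partial_1 b^3)\|_{H^2}^2$, and the background terms $(\partial^\alpha\partial_2 b^3|\partial^\alpha u^3)+(\partial^\alpha\partial_2u^3|\partial^\alpha b^3)$ cancel after integration by parts. Three kinds of terms remain: the transport terms $(\partial^\alpha(u\cdot\nabla u^3)|\partial^\alpha u^3)$ and $(\partial^\alpha(u\cdot\nabla b^3)|\partial^\alpha b^3)$, the stretching terms $(\partial^\alpha(b\cdot\nabla b^3)|\partial^\alpha u^3)+(\partial^\alpha(b\cdot\nabla u^3)|\partial^\alpha b^3)$, and the pressure term $-(\partial^\alpha\partial_3p|\partial^\alpha u^3)$.

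\textbf{Commutators.} The top-order parts of the transport and stretching terms cancel: by $\operatorname{div} u=0$, and by $\operatorname{div} b=0$ using the symmetry between the two pairings. So only commutators remain. These will be expanded as in the decomposition \eqref{I2deco} of $I_2$. Whenever a factor carries $\partial_3$ on a vertical component, I rewrite it via $\partial_3u^3=-\operatorname{div}_\h u^\h$ and $\partial_3b^3=-\operatorname{div}_\h b^\h$. After this rewriting, every product has one of two forms:
\begin{itemize}
\item at least one horizontal factor, each vertical factor carrying a horizontal derivative;
\item only horizontal factors.
\end{itemize}
Each such trilinear integral is then estimated with Lemma \ref{tool}, mainly \eqref{er} and \eqref{yi}. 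The aim is to put $\partial_1$ on $b$-factors and $\nabla_\h$ on $u$-factors, so that the product is bounded by one of
\begin{itemize}
\item $\|(\nabla_\h u^3,\partial_1b^3)\|_{H^2}\|(\nabla_\h u^\h,\partial_1b^\h)\|_{H^2}\|(u^\h,b^\h)\|_{H^2}$;
\item $\|(\nabla_\h u^\h,\partial_1b^\h)\|_{H^2}\|(\nabla_\h u^3,\partial_1b^3)\|_{H^2}\|(u^3,b^3)\|_{H^2}$;
\item terms with a $\|\partial_2 b\|_{H^1}$ factor. These appear whenever the only available derivative on a $b$-factor is $\partial_2$, which is why $\|(\partial_2b^\h,\partial_2b^3)\|_{H^1}$ enters.
\end{itemize}
Young's inequality then yields the pieces $\tfrac9{100}\|\cdot\|^2$, $\tfrac1{25}\|\partial_2 b\|_{H^1}^2$ and the $C(\dots)$ products in \eqref{u3b3H2}.

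\textbf{Pressure.} I solve \eqref{ppp} and write $\partial_3 p$ via Riesz transforms $(-\Delta)^{-1}\partial_i\partial_j(u^iu^j-b^ib^j)$. Terms containing a horizontal factor are bounded using Riesz boundedness in $L^p_\h L^q_{\rm v}$ together with \eqref{GNS2d}. The pure vertical contributions $\partial_3^2(u^3)^2$ and $\partial_3^2(b^3)^2$ are converted through incompressibility into $\partial_3(u^3\operatorname{div}_\h u^\h)$ and $\partial_3(b^3\operatorname{div}_\h b^\h)$, and are then bounded by the second of the product forms above. This gives the vertical version \eqref{A2(v)} of the pressure estimate.

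\textbf{Auxiliary estimate \eqref{p2bH1}.} This exploits the linear coupling in the $u$-equation. For $|\beta|\le1$ I pair $\partial^\beta$ of the $u$-equation with $\partial^\beta\partial_2 b$, giving
\begin{align*}
\|\partial^\beta\partial_2 b\|_{L^2}^2=(\partial^\beta\partial_t u|\partial^\beta\partial_2b)+(\partial^\beta(u\cdot\nabla u-\Delta_\h u+\nabla p-b\cdot\nabla b)|\partial^\beta\partial_2 b).
\end{align*}
The time-derivative term is rewritten as
\begin{align*}
\tfrac{d}{dt}(\partial^\beta u|\partial^\beta\partial_2b)+(\partial^\beta\partial_2u|\partial^\beta\partial_t b),
\end{align*}
and $\partial_t b$ is substituted from the $b$-equation. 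This produces:
\begin{itemize}
\item $-\|\partial^\beta\partial_2u\|^2$, which is bounded by the $\nabla_\h u$ terms; splitting into components gives the $\tfrac{41}{10}\|\nabla_\h u^3\|_{H^2}^2$ and the horizontal counterpart;
\item $(\partial^\beta\partial_2u|\partial^\beta\partial_1^2 b)$, handled by Cauchy–Schwarz;
\item nonlinear terms.
\end{itemize}
The remaining contributions are treated as follows.
\begin{itemize}
\item The term $(\Delta_\h u|\partial_2 b)$ is absorbed by Young's inequality with the constant $\tfrac{11}{20}$.
\item The pressure term vanishes, since $\operatorname{div}\partial_2 b=0$ and the full vector is used.
\item The nonlinear terms are again estimated by Lemma \ref{tool}, splitting horizontal and vertical components as before.
\end{itemize}

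\textbf{Main obstacle.} I expect the hardest step to be the top-order commutators in the $b^3$ estimate in which $\partial_3^2$ falls on $b^3$ or $b^\h$. There no dissipation is available, and the derivative transfer must be done carefully to leave at most one undissipated high-order factor per product. I will first try the rearrangement pattern of \eqref{I2deco}, and fall back on the factor $\|\partial_2 b\|_{H^1}$ when that pattern fails.
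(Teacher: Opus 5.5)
Your plan is essentially the paper's proof. The common steps are:
\begin{itemize}
\item $L^2$ and $\partial_i^2$ energy estimates for $(u^3,b^3)$, in which the transport, stretching and $\partial_2$-coupling terms cancel;
\item derivative transfer through $\partial_3u^3=-\mathrm{div}_\mathrm{h}u^\mathrm{h}$ and $\partial_3b^3=-\mathrm{div}_\mathrm{h}b^\mathrm{h}$, followed by Lemma \ref{tool};
\item a Riesz-transform treatment of the pressure (the paper's $A_1$--$A_6$);
\item for \eqref{p2bH1}, pairing the $u$-equation and its first derivatives with $\partial_2 b$, substituting $\partial_t b$ from the magnetic equation, and noting that the pressure drops out since $\mathrm{div}\,b=0$.
\end{itemize}

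Three minor corrections:
\begin{itemize}
\item The term produced by $\partial_t b\ni\partial_2u$ is $+\|\partial^\beta\partial_2u\|_{L^2}^2$, not $-$. It is a cost, which is how you in fact treat it.
\item The $L^p_\mathrm{h}(L^q_\mathrm{v})$ Riesz bounds with \eqref{GNS2d} are what the paper needs for the converted terms $A_3$ and $A_6$. The terms containing a horizontal factor need only $L^2$ bounds and \eqref{yi}.
\item At top order the paper first moves $\partial_3$ onto $u^3$, writing $I_8=-(\partial_i^2p\,|\,\partial_i^2\mathrm{div}_\mathrm{h}u^\mathrm{h})_{L^2}$. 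Then only $\|\partial_i^2p\|_{L^2}\lesssim\|\partial_ku^l\partial_lu^k-\partial_kb^l\partial_lb^k\|_{L^2}$ is needed. Your sketch leaves this step implicit.
\end{itemize}
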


\begin{prop}\label{h}
	Under the assumptions of Theorem \ref{main.Th}, the horizontal components $(u^\mathrm{h},b^\mathrm{h})$ satisfy
	\begin{align}\label{uhbhH2}
		\frac12 \frac{\mathrm{d}}{\dt}&\|(u^\mathrm{h},b^\mathrm{h})\|^2_{H^2}+\|(\nabla_{\mathrm{h}} u^\mathrm{h},\partial_1b^\mathrm{h})\|^2_{H^2}\nonumber\\
		\leqq &\frac{\mathrm{d}}{\dt}\int_{\R^3} \bigl(b^i\partial_3^2b^j\partial_3^2b^k-b^i\partial_3^2u^j\partial_3^2u^k\bigr)\dx+\frac{1}{50}\|\partial_2b^\mathrm{h}\|_{H^1}^2\nonumber\\
		&+\frac{11}{100}\|(\nabla_{\mathrm{h}} u^\mathrm{h},\partial_1b^\mathrm{h})\|_{H^2}^2+C\|(\nabla_{\mathrm{h}} u^\mathrm{h},\partial_1b^\mathrm{h})\|_{H^2}^2\|(u^\mathrm{h},b^\mathrm{h})\|_{H^{2}}^2\nonumber\\
		&+C\Bigl(\|(\nabla_{\mathrm{h}} u^3,\partial_1b^3)\|_{H^2}^2+\|(\partial_2b^\mathrm{h},\partial_2b^3)\|_{H^1}^2\Bigr)\|(u^\mathrm{h},b^\mathrm{h})\|_{H^2}^2\nonumber\\
		&+C\Bigl(\|(\nabla_{\mathrm{h}} u^\mathrm{h},\nabla_{\mathrm{h}} u^3)\|_{H^2}^2\|u^3\|_{H^2}^2+ \|(\partial_2b^\mathrm{h},\partial_2b^3)\|_{H^1}^2\|(b^\mathrm{h},b^3)\|_{H^2}^2\Bigr)\|u^\mathrm{h}\|_{H^2}^2\nonumber\\
		&+C\Bigl(\|\partial_2b^3\|_{H^1}^2\|b^3\|_{H^2}^2+\|(\nabla_{\mathrm{h}} u^\mathrm{h},\nabla_{\mathrm{h}} u^3)\|_{H^2}^2\|(u^\mathrm{h},u^3)\|_{H^2}^2\Bigr)\|b^\mathrm{h}\|_{H^2}^2,
	\end{align}
	where in \eqref{uhbhH2} the indices $i,j,k$ belong to $\{1,2\}$. And the auxiliary estimate
	\begin{align}\label{p2bhH1}
		\|\partial_2 b^\mathrm{h}\|_{H^1}^2\leqq&\frac{\mathrm{d}}{\mathrm{d}t}\int_{\R^3}\bigl(\partial_{i}u^\mathrm{h}\partial_{i}\partial_{2}b^\mathrm{h}+ u^\mathrm{h}\partial_2 b^\mathrm{h}\bigr)\dx+\frac{203}{50}\|\nabla_{\mathrm{h}} u^\mathrm{h}\|_{H^2}^2+\frac{53}{50}\|\partial_1 b^\mathrm{h}\|_{H^2}^2+\frac{7}{25}\|\partial_2 b^\mathrm{h}\|_{H^1}^2\nonumber\\
		&+C\Bigl(\|(\nabla_{\mathrm{h}} u^\mathrm{h},\partial_1b^\mathrm{h})\|_{H^2}^2+\|(\partial_2b^\mathrm{h},\partial_2b^3)\|_{H^1}^2+\|\nabla_{\mathrm{h}}u^3\|_{H^{2}}^2\Bigr)\|(u^\mathrm{h},b^\mathrm{h})\|_{H^{2}}^2\nonumber\\
		&+C\Big(\|(\nabla_{\mathrm{h}} u^\mathrm{h},\nabla_{\mathrm{h}} u^3)\|_{H^2}^2\|( u^\mathrm{h}, u^3)\|_{H^2}^2+ \|(\partial_2b^\mathrm{h},\partial_2b^3)\|_{H^1}^2\|(b^\mathrm{h},b^3)\|_{H^2}^2\nonumber\\
		&+\|(\partial_1b^\mathrm{h},\partial_1b^3)\|_{H^2}^2\|(b^\mathrm{h},b^3)\|_{H^2}^2\Big)\|u^\mathrm{h}\|_{H^2}^2\nonumber\\
		&+C\Bigl(\|(\nabla_{\mathrm{h}} u^\mathrm{h},\nabla_{\mathrm{h}} u^3)\|_{H^2}^2\|( u^\mathrm{h}, u^3)\|_{H^2}^2+ \|(\partial_2b^\mathrm{h},\partial_2b^3)\|_{H^1}^2\|(b^\mathrm{h},b^3)\|_{H^2}^2\Bigr)\|b^\mathrm{h}\|_{H^2}^2,
	\end{align}
	where the summation over $i=1,2,3$ is understood.
\end{prop}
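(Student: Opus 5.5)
The plan is to prove \eqref{uhbhH2} by an $H^2$ energy estimate on the horizontal equations alone, and \eqref{p2bhH1} by a cross-term estimate that exploits the background-field coupling. For \eqref{uhbhH2}, apply $\partial^\alpha$ with $|\alpha|\le 2$ to the $u^\mathrm{h}$ and $b^\mathrm{h}$ equations of \eqref{eq.MHD}, and take the $L^2$ inner product with $\partial^\alpha u^\mathrm{h}$ and $\partial^\alpha b^\mathrm{h}$.

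\textbf{Linear and pressure terms.} The linear terms cancel exactly: $(\partial_2\partial^\alpha b^\mathrm{h}|\partial^\alpha u^\mathrm{h})+(\partial_2\partial^\alpha u^\mathrm{h}|\partial^\alpha b^\mathrm{h})=0$. The dissipation $\|(\nabla_\mathrm{h}u^\mathrm{h},\partial_1 b^\mathrm{h})\|_{H^2}^2$ appears on the left.

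The pressure term no longer vanishes. Using \eqref{p_same}, I would write it as $(\partial^\alpha p|\partial^\alpha \mathrm{div}_\mathrm{h}u^\mathrm{h})$ and solve \eqref{ppp} as a sum of Riesz transforms of quadratic products. Terms with at least one horizontal factor are handled by Hölder's inequality and Lemma \ref{tool}. For the terms $(-\Delta)^{-1}\partial_3^2 (u^3)^2$ and $(-\Delta)^{-1}\partial_3^2 (b^3)^2$, I would replace $\partial_3 u^3=-\mathrm{div}_\mathrm{h}u^\mathrm{h}$ to reach the form $A_3$ (and similarly $A_6$). These I would estimate in $L^p_\mathrm{h}L^q_\mathrm{v}$ using the boundedness of $(-\Delta)^{-1}\partial_i\partial_j$ there, combined with \eqref{GNS2d} in $x_\mathrm{h}$ and \eqref{L^infty_1D} in $x_3$. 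The target is a bound of type $\|\nabla_\mathrm{h}u^3\|_{H^2}\|\nabla_\mathrm{h}u^\mathrm{h}\|_{H^2}\|u^\mathrm{h}\|_{H^2}$ or $\|\nabla_\mathrm{h} u^\mathrm{h}\|_{H^2}^2\|u^\mathrm{h}\|^{1/2}_{H^2}\|u^3\|^{1/2}_{H^2}$, so that every contribution carries a factor $\|u^\mathrm{h}\|_{H^2}$ or $\|b^\mathrm{h}\|_{H^2}$.

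\textbf{Nonlinear terms.} At top order, transport uses $\mathrm{div}\,u=0$. Each commutator term is expanded. Every factor with a vertical derivative on a vertical component is converted via $\partial_3u^3=-\mathrm{div}_\mathrm{h}u^\mathrm{h}$ and $\partial_3 b^3=-\mathrm{div}_\mathrm{h}b^\mathrm{h}$. The resulting terms are distributed by \eqref{yi}--\eqref{san} so that $\partial_1$ always lands on a $b$-factor and $\nabla_\mathrm{h}$ on a $u$-factor, and then Young's inequality produces the $\tfrac{11}{100}$ dissipation piece.

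\textbf{Main obstacle.} Not all terms can be treated this way. The terms of type $W_1$ in \eqref{W1}, arising from $\partial_3^2(u\cdot\nabla b^\mathrm{h})$ and $\partial_3^2(b\cdot\nabla u^\mathrm{h})$, are the main obstacle. For these I would follow \eqref{ha}--\eqref{W2} in four steps:
\begin{enumerate}
\item Substitute $\partial_2 u^i$ from the $b$-equation.
\item Integrate the resulting $\partial_t$ in time, which produces $\frac{\mathrm d}{\mathrm dt}\int b^i\partial_3^2b^j\partial_3^2b^k$.
\item Re-substitute $\partial_t\partial_3^2 b$. The $\partial_2 u$ term generated here yields, by symmetry, the companion $-\frac{\mathrm d}{\mathrm dt}\int b^i\partial_3^2u^j\partial_3^2u^k$ after one more iteration through the $u$-equation, using \eqref{bbbb}.
\item Close the remaining $W_2$-type terms by integrating by parts on the symmetric pair $b^i\partial_3^2u^j\,u\cdot\nabla\partial_3^2u^k$. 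This lowers the order and leaves quartic terms estimated by \eqref{san}.
\end{enumerate}
The $\tfrac1{50}\|\partial_2 b^\mathrm{h}\|_{H^1}^2$ term comes from the $\partial_2 b$ and pressure contributions generated along this iteration.

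\textbf{Auxiliary estimate \eqref{p2bhH1}.} Take the inner product of the $\partial_i$-differentiated $u^\mathrm{h}$ equation (with $i=0,1,2,3$) with $\partial_i\partial_2 b^\mathrm{h}$, so that the term $\partial_2b$ yields $\|\partial_2b^\mathrm{h}\|_{H^1}^2$. The time derivative is rewritten as $\frac{\mathrm d}{\mathrm dt}\int\partial_iu^\mathrm{h}\partial_i\partial_2b^\mathrm{h}$ plus $(\partial_i\partial_2 u^\mathrm{h}|\partial_i\partial_t b^\mathrm{h})$. In the latter, substitute the $b$-equation: the $\partial_2 u$ piece gives $\|\partial_2u^\mathrm{h}\|_{H^1}^2\le\|\nabla_\mathrm{h}u^\mathrm{h}\|_{H^2}^2$, and the $\partial_1^2 b$ piece gives $\|\partial_1b^\mathrm{h}\|_{H^2}$ terms. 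The term $\Delta_\mathrm{h}u$ is absorbed by Young's inequality with explicit weights, giving the constants $\tfrac{203}{50}$, $\tfrac{53}{50}$ and $\tfrac{7}{25}$. The pressure term $(\nabla_\mathrm{h}\partial_ip|\partial_i\partial_2b^\mathrm{h})$ and the nonlinear terms are bounded as above by products carrying a factor $\|u^\mathrm{h}\|_{H^2}^2$ or $\|b^\mathrm{h}\|_{H^2}^2$.
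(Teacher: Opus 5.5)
Your plan is essentially the paper's proof: $L^2$ and $\partial_i^2$ energy estimates for $(u^\mathrm{h},b^\mathrm{h})$, the pressure via \eqref{p_same} and the $A_3$/$A_6$ trick at the $L^2$ level (at top order the paper simply uses \eqref{P_H2}, i.e.\ $\|\partial_i^2p\|_{L^2}\lesssim\|\partial_iu^j\partial_ju^i-\partial_ib^j\partial_jb^i\|_{L^2}$), Lemma \ref{tool}, the double substitution of Lemma \ref{wild} for $J_{325}$, and the cross estimate against $\partial_i\partial_2b^\mathrm{h}$ for \eqref{p2bhH1}. One correction to your step 3: the paper estimates the $\partial_2\partial_3^2u^j$ term from the re-substitution directly, and gets the corrector $\int b^i\partial_3^2u^j\partial_3^2u^k$ from the top-order term $-\int b^i\partial_3^2b^k\,b\cdot\nabla\partial_3^2u^j$ by moving $b\cdot\nabla$ onto $\partial_3^2b^k$ and using \eqref{bbbb} (your route through $\partial_2u$ works only if the resulting $b\cdot\nabla\partial_3^2b^k$ term is cancelled against this one); done carefully, either way the corrector comes with a plus sign---the minus in \eqref{uhbhH2} is a sign slip in the first line of \eqref{S21deco}, harmless for Theorem \ref{main.Th} because only a bound on $|Z(t)|$ is used.
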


The detailed proofs of Propositions \ref{v} and \ref{h} are presented in Sections \ref{3} and \ref{4}, respectively. We now show how these estimates imply Theorem \ref{main.Th}.

\begin{proof}[Proof of Theorem \ref{main.Th}]
	Since the local well-posedness of system \eqref{eq.MHD} in the space $H^2(\R^3)$ follows from standard arguments  (see, e.g., \cite{02majda}), we focus primarily on establishing global bounds for $(u,b)$. Our strategy  is to establish the global estimates of horizontal component $(u^\mathrm{h},b^\mathrm{h})$ and vertical component $(u^3,b^3)$ in $H^2(\R^3)$, respectively.
	
	Let $(u,b)$ be a sufficiently smooth solution of \eqref{eq.MHD} on the maximal time interval $[0,T^*)$. We define the exit time
	\begin{align}\label{eq:exit-time}
		T^{**}\eqdefa \sup\Bigl\{t<T^*:\|(u^\mathrm{h},b^\mathrm{h})\|_{L_{t}^{\infty}(H^2)}^2+\|(\nabla_\mathrm{h}u^\mathrm{h},\partial_1 b^\mathrm{h})\|_{L_{t}^{2}(H^2)}^2\leqq \eta \Bigr\},
	\end{align}
	for a constant $\eta>0$ to be chosen later. Our goal is to prove that $T^{**}=T^*$ for an appropriate $\eta$. Otherwise, if $T^{**}<T^*$, we combine the estimates \eqref{u3b3H2} and \eqref{p2bH1} to obtain a closed inequality for the vertical part. Multiplying \eqref{p2bH1} by $1/5$ and adding it to \eqref{u3b3H2} yields, for any $t\in[0,T^{**}]$,
	\begin{align*}
		\frac12 \frac{\mathrm{d}}{\dt}&\|(u^3,b^3)\|^2_{H^2}+\|(\nabla_{\mathrm{h}} u^3,\partial_1b^3)\|^2_{H^2}+\frac{1}{5}\|(\partial_2b^\mathrm{h},\partial_2b^3)\|_{H^1}^2\\
		\leqq& \|(\nabla_{\mathrm{h}} u^3,\partial_1b^3)\|_{H^2}^2\Bigl(\frac{41}{50}+C_1\eta\Bigr)+\|(\partial_2b^\mathrm{h},\partial_2b^3)\|_{H^1}^2\Bigl(\frac{3}{20}+C_1\eta\Bigr)\\
		&+\frac{1}{5}\frac{\mathrm{d}}{\mathrm{d}t}\int_{\R^3}\bigl(u^\mathrm{h}\partial_2 b^\mathrm{h}+u^3\partial_2 b^3+\partial_{i}u^\mathrm{h}\partial_{i}\partial_{2}b^\mathrm{h}+\partial_{i}u^3\partial_{i}\partial_{2}b^3\bigr)\dx\\
		&+C_1\|(\nabla_{\mathrm{h}} u^\mathrm{h},\partial_1b^\mathrm{h})\|_{H^{2}}^2\|(u^3,b^3)\|_{H^2}^2+C_1\|(\nabla_{\mathrm{h}} u^\mathrm{h},\partial_{1}b^\mathrm{h})\|_{H^{2}}^2\bigl(1+\|( u^\mathrm{h}, b^\mathrm{h})\|_{H^2}^2\bigr).
	\end{align*}
	Now choose $\eta$ sufficiently small, such that $\eta\leqq 1/(50C_1)$. Then the coefficients of the dissipative terms become strictly smaller than one, and we obtain
	\begin{align}\label{eq:vert-ineq}
		\frac12 \frac{\mathrm{d}}{\dt}&\|(u^3,b^3)\|^2_{H^2}+\frac{4}{25}\|(\nabla_{\mathrm{h}} u^3,\partial_1b^3)\|_{H^2}^2+\frac{3}{100}\|(\partial_2b^\mathrm{h},\partial_2b^3)\|_{H^1}^2\nonumber\\
		\leqq&C_1\|(\nabla_{\mathrm{h}} u^\mathrm{h},\partial_1b^\mathrm{h})\|_{H^{2}}^2\|(u^3,b^3)\|_{H^2}^2+C_1\|(\nabla_{\mathrm{h}} u^\mathrm{h},\partial_{1}b^\mathrm{h})\|_{H^{2}}^2\bigl(1+\|( u^\mathrm{h}, b^\mathrm{h})\|_{H^2}^2\bigr) \nonumber\\
		&+\frac{1}{5}\frac{\mathrm{d}}{\mathrm{d}t}\int_{\R^3}u^\mathrm{h}\partial_2 b^\mathrm{h}+u^3\partial_2 b^3+|\nabla u^\mathrm{h}\nabla\partial_{2}b^\mathrm{h}|+|\nabla u^3\nabla\partial_{2}b^3|\dx.
	\end{align}
	Integrating \eqref{eq:vert-ineq} over $[0,t]$ gives
	\begin{align}
		\frac12 \|(u^3&(t),b^3(t))\|^2_{H^2}+\int_{0}^{t}\frac{4}{25}\|(\nabla_{\mathrm{h}} u^3,\partial_1b^3)\|_{H^2}^2\dt'+\int_{0}^{t}\frac{3}{100}\|(\partial_2b^\mathrm{h},\partial_2b^3)\|_{H^1}^2\dt'\nonumber\\
		\leqq&C_1\int_{0}^{t}\|(\nabla_{\mathrm{h}} u^\mathrm{h},\partial_1b^\mathrm{h})\|_{H^{2}}^2\|(u^3,b^3)\|_{H^2}^2+\|(\nabla_{\mathrm{h}} u^\mathrm{h},\partial_{1}b^\mathrm{h})\|_{H^{2}}^2\bigl(1+\|( u^\mathrm{h}, b^\mathrm{h})\|_{H^2}^2\bigr)\dt'\nonumber\\
		&+\frac12\|(u^3_0,b^3_0)\|^2_{H^2}+\frac{1}{5}(Y(t)-Y(0)),
	\end{align}
	where
	\begin{align*}
		Y(\cdot)\eqdefa& \int_{\R^3}u^\mathrm{h}(\cdot)\partial_2 b^\mathrm{h}(\cdot)+u^3(\cdot)\partial_2 b^3(\cdot)+|\nabla u^\mathrm{h}(\cdot)\nabla\partial_{2}b^\mathrm{h}(\cdot)|+|\nabla u^3(\cdot)\nabla\partial_{2}b^3(\cdot)|\dx.
	\end{align*}
	Using the bound $Y(\tau) \leqq \|(u(\tau),b(\tau))\|_{H^2}^2$ for any $\tau \in [0,t]$ (which follows from Hölder's inequality), we have in particular
	\begin{align*}
		Y(t) - Y(0) \leqq\|(u(t),b(t))\|_{H^2}^2 + \|(u_0,b_0)\|_{H^2}^2,
	\end{align*}
	which implies
	\begin{align*}
		\frac{3}{10}\|(u^3(t)&,b^3(t))\|^2_{H^2}+\frac{4}{25}\int_{0}^{t}\|(\nabla_{\mathrm{h}} u^3,\partial_1b^3)\|_{H^2}^2\dt'+\frac{3}{100}\int_{0}^{t}\|(\partial_2b^\mathrm{h},\partial_2b^3)\|_{H^1}^2\dt'\\
		\leqq&C_1\int_{0}^{t}\|(\nabla_{\mathrm{h}} u^\mathrm{h},\partial_1b^\mathrm{h})\|_{H^2}^2\bigl(1+\|(u^\mathrm{h},b^\mathrm{h})\|_{H^2}^2\bigr)+\|(\nabla_{\mathrm{h}} u^\mathrm{h},\partial_1b^\mathrm{h})\|_{H^2}^2\|(u^3,b^3)\|_{H^2}^2\dt'\\
		&+\frac{7}{10}\|(u^3_0,b^3_0)\|^2_{H^2}+\frac15\|(u^\mathrm{h}_0,b^\mathrm{h}_0)\|_{H^{2}}^2+\frac15\|(u^\mathrm{h}(t),b^\mathrm{h}(t))\|_{H^{2}}^2.
	\end{align*}
	Applying Gronwall's inequality and using the bound \eqref{eq:exit-time} for the horizontal quantities on $[0,T^{**}]$, we obtain
	\begin{align}\label{eq:vert-final-bound}
		\|(u^3&,b^3)\|^2_{L_{t}^{\infty}(H^2)}+\|(\nabla_{\mathrm{h}} u^3,\partial_1b^3)\|^2_{L_{t}^{2}(H^2)}+\|(\partial_2b^\mathrm{h},\partial_2b^3)\|_{L_{t}^{2}(H^1)}^2\nonumber\\
		\leqq& C_1'\Bigl(\|(u^3_0,b^3_0)\|^2_{H^2}+\|(u^\mathrm{h}_0,b^\mathrm{h}_0)\|_{H^{2}}^2+\eta\Bigr)e^{C_1'\eta}\nonumber\\
		\leqq& C_1''\|(u^3_0,b^3_0)\|^2_{H^2},
	\end{align}
	provided $\eta$ is chosen small enough (independent of the initial data) and the smallness condition \eqref{small} is satisfied.
	
	Now we turn to controlling the horizontal components. Multiplying \eqref{p2bhH1} by $1/5$ and adding it to \eqref{uhbhH2} gives
	\begin{align*}
		\frac12 \frac{\mathrm{d}}{\dt}&\|(u^\mathrm{h},b^\mathrm{h})\|^2_{H^2}+\|(\nabla_{\mathrm{h}} u^\mathrm{h},\partial_1b^\mathrm{h})\|^2_{H^2}+\frac{1}{5}\|\partial_2 b^\mathrm{h}\|_{H^1}^2\\
		\leqq & \|(\nabla_{\mathrm{h}} u^\mathrm{h},\partial_1b^\mathrm{h})\|_{H^2}^2\Bigl(\frac{461}{500}+C_2\eta\Bigr)+\|\partial_2b^\mathrm{h}\|_{H^1}^2\Bigl(\frac{19}{250}+C_2\eta\Bigr)+E^{(u^\mathrm{h},b^\mathrm{h})}+E^{u^\mathrm{h}}+E^{b^\mathrm{h}}\\
		&+\frac15\frac{\mathrm{d}}{\dt}\int_{\R^3} b^\mathrm{h} |\partial_3^2b^\mathrm{h} |^2-b^\mathrm{h} |\partial_3^2u^\mathrm{h} |^2+|\nabla u^\mathrm{h} \nabla\partial_{2}b^\mathrm{h} |+ u^\mathrm{h} \partial_2 b^\mathrm{h} \dx,
	\end{align*}
	where
	\begin{align*}
		E^{(u^\mathrm{h},b^\mathrm{h})}&\eqdefa C_2\Bigl(\|(\nabla_{\mathrm{h}} u^3,\partial_1b^3)\|_{H^2}^2+\|(\partial_2b^\mathrm{h},\partial_2b^3)\|_{H^1}^2\Bigr)\|(u^\mathrm{h},b^\mathrm{h})\|_{H^2}^2,\\
		E^{u^\mathrm{h}}&\eqdefa C_2\Bigl(\|(\nabla_{\mathrm{h}} u^\mathrm{h},\nabla_{\mathrm{h}} u^3)\|_{H^2}^2\|( u^\mathrm{h}, u^3)\|_{H^2}^2+ \|(\partial_2b^\mathrm{h},\partial_2b^3)\|_{H^1}^2\|(b^\mathrm{h},b^3)\|_{H^2}^2\\
		&\qquad\qquad+\|(\partial_1b^\mathrm{h},\partial_1b^3)\|_{H^2}^2\|(b^\mathrm{h},b^3)\|_{H^2}^2\Bigr)\|u^\mathrm{h}\|_{H^2}^2,\\
		E^{b^\mathrm{h}}&\eqdefa C_2\Bigl(\|(\nabla_{\mathrm{h}} u^\mathrm{h},\nabla_{\mathrm{h}} u^3)\|_{H^2}^2\|( u^\mathrm{h}, u^3)\|_{H^2}^2+ \|(\partial_2b^\mathrm{h},\partial_2b^3)\|_{H^1}^2\|(b^\mathrm{h},b^3)\|_{H^2}^2\Bigr)\|b^\mathrm{h}\|_{H^2}^2 .
	\end{align*}
	For $\eta > 0$ in \eqref{eq:exit-time} being small enough again that $\eta\leqq1/(50C_2)$, then absorbing  the leading dissipative terms and integrating over $[0,t]$ gives
	\begin{align}\label{eq:horiz-ineq}
		\frac12 \|(u^\mathrm{h}&(t),b^\mathrm{h}(t))\|^2_{H^2}+\frac{2}{5}\int_{0}^{t}\|(\nabla_{\mathrm{h}} u^\mathrm{h},\partial_1b^\mathrm{h})\|_{H^2}^2\dt'+\frac{1}{10}\int_{0}^{t}\|\partial_2 b^\mathrm{h}\|_{H^1}^2\dt'\nonumber\\
		\leqq & \frac12 \|(u^\mathrm{h}_0,b^\mathrm{h}_0)\|^2_{H^2}+\frac15(Z(t)-Z(0))+\int_{0}^{t}E^{(u^\mathrm{h},b^\mathrm{h})}+E^{u^\mathrm{h}}+E^{b^\mathrm{h}}\dt',
	\end{align}
	where
	\begin{align*}
		Z(\cdot)\eqdefa& \int_{\R^3} b^\mathrm{h} (\cdot)|\partial_3^2b^\mathrm{h} (\cdot)|^2-b^\mathrm{h} (\cdot)|\partial_3^2u^\mathrm{h} (\cdot)|^2+|\nabla u^\mathrm{h} (\cdot)\nabla\partial_{2}b^\mathrm{h} (\cdot)|+ u^\mathrm{h} (\cdot)\partial_2 b^\mathrm{h} (\cdot)\dx.
	\end{align*}
	By a similar derivation of $Y(t)$, we get
	\begin{align*}
		Z(t)\leqq \|(u^\mathrm{h}(t),b^\mathrm{h}(t))\|_{H^2}^2(1+C_3\|(u^\mathrm{h}(t),b^\mathrm{h}(t))\|_{H^2})\leqq \|(u^\mathrm{h}(t),b^\mathrm{h}(t))\|_{H^2}^2(1+C_3\eta^\frac12).
	\end{align*}
	Further reducing $\eta$ if necessary, we have $\eta\leqq (1/C_3)^2$, which gives
	\begin{align*}
		Z(t)\leqq2 \|(u^\mathrm{h}(t),b^\mathrm{h}(t))\|_{H^2}^2.
	\end{align*}
	Inserting the bound of $Z(t)$(and $Z(0)$ estimated by the same method) into \eqref{eq:horiz-ineq}, it holds
	\begin{align*}
		\frac{1}{10}\|(u^\mathrm{h}&(t),b^\mathrm{h}(t))\|^2_{H^2}+\frac{2}{5}\int_{0}^{t}\|(\nabla_{\mathrm{h}} u^\mathrm{h},\partial_1b^\mathrm{h})\|_{H^2}^2\dt'+\frac{1}{10}\int_{0}^{t}\|\partial_2 b^\mathrm{h}\|_{H^1}^2\dt'\\
		\leqq & \frac{9}{10}\|(u^\mathrm{h}_0,b^\mathrm{h}_0)\|^2_{H^2}
		+\int_{0}^{t}\bigl(E^{(u^\mathrm{h},b^\mathrm{h})}+E^{u^\mathrm{h}}+E^{b^\mathrm{h}}\bigr)\dt'.
	\end{align*}
	Then we insert the bound \eqref{eq:vert-final-bound} for the vertical components and use the definition \eqref{eq:exit-time} of $\eta$ for the horizontal ones. After applying Gronwall's inequality we arrive at
	\begin{align}\label{eq:horiz-final-bound}
		\|(u^\mathrm{h}&,b^\mathrm{h})\|^2_{L_{t}^{\infty}(H^2)}+\|(\nabla_{\mathrm{h}} u^\mathrm{h},\partial_1b^\mathrm{h})\|_{L_{t}^{2}(H^2)}^2+\|\partial_2b^\mathrm{h}\|_{L_{t}^{2}(H^1)}^2\nonumber\\
		\leqq& C_2'\|(u^\mathrm{h}_0,b^\mathrm{h}_0)\|^2_{H^2}
		\exp\Bigl(C\|(u^3_0,b^3_0)\|^2_{H^2}\bigl(1+\|(u^3_0,b^3_0)\|^2_{H^2}\bigr)\Bigr).
	\end{align}
	Recalling the smallness condition \eqref{small}, there exists an absolute $\varepsilon>0$ such that
	\begin{align*}
		\|(u^\mathrm{h}_0,b^\mathrm{h}_0)\|_{H^2}^2
		\exp\Bigl(C\|(u^3_0,b^3_0)\|_{H^2}^2\bigl(1+\|(u^3_0,b^3_0)\|_{H^2}^2\bigr)\Bigr)\leqq\varepsilon .
	\end{align*}
	Choose $\eta=2C_2'\varepsilon$ in the definition \eqref{eq:exit-time}, where $C_2'$ is the constant from \eqref{eq:horiz-final-bound}. Then \eqref{eq:horiz-final-bound} implies that for every $t\in[0,T^{**}]$,
	\begin{align*}
		\|(u^\mathrm{h},b^\mathrm{h})\|_{L_{t}^{\infty}(H^2)}^2+\|(\nabla_\mathrm{h}u^\mathrm{h},\partial_1 b^\mathrm{h})\|_{L_{t}^{2}(H^2)}^2 \leqq C_2'\varepsilon = \frac{\eta}{2} < \eta .
	\end{align*}
	This strict inequality contradicts the definition \eqref{eq:exit-time} of $T^{**}$ as the supremum of times satisfying the bound $\leqq\eta$. Hence $T^{**}=T^*$, and the solution satisfies the uniform bounds \eqref{eq:vert-final-bound} and \eqref{eq:horiz-final-bound} on the whole maximal interval $[0,T^*)$. Standard continuation arguments (see, e.g., \cite{06tao}) then show that $T^*=\infty$, i.e., the solution is global.
	
	Finally, the claimed regularity
	\begin{align*}
		(u,b)\in L^\infty(\mathbb{R}^+;H^2(\mathbb{R}^3)),\quad
		(\nabla_\mathrm{h}u,\partial_1b)\in L^2(\mathbb{R}^+;H^2(\mathbb{R}^3)),\quad
		\partial_2b\in L^2(\mathbb{R}^+;H^1(\mathbb{R}^3))
	\end{align*}
	follows directly from \eqref{eq:vert-final-bound} and \eqref{eq:horiz-final-bound}. This completes the proof of Theorem \ref{main.Th}.
\end{proof}

\section{The proof of Proposition \ref{v}}\label{3}
In this section, we shall prove the Proposition \ref{v}. It is divided into two subsections and the first one is the estimate of $\|(u^3,b^3)\|_{H^{2}}$. The subsequent subsection is the estimate of $\|\partial_2 (b^\h,b^3)\|_{H^{1}}$.
\subsection{The proof of \eqref{u3b3H2}} Due to
\begin{align}\label{Hsdef}
	\|f\|_{H^{s}}^{2}\sim\|f\|_{L^2}^2+\sum_{i=1}^{3}\|\partial_{i}^sf\|_{L^2}^{2},
\end{align}
we only intend to  estimate  $\|(u^3,b^3)\|_{L^2}^2$ and $\sum_{i=1}^{3}$ $\|\partial_{i}^2(u^3,b^3)\|_{L^2}^{2}$. We split it as two parts. Initially, by the standard $L^2$ energy method, \eqref{eq.MHD}  holds
\begin{align}\label{u3b3l2}
	\frac12\frac{\mathrm{d}}{\dt}  \|(u^3,b^3)\|_{L^2}^2 +\|(\nabla_\h u^3,\partial_1 b^3)\|_{ L^2}^2=-(\partial_3 p|u^3)_{L^2}\eqdefa A.
\end{align}
Since \eqref{p_same} and \eqref{ppp}, we have
\begin{align*}
	A=-\sum_{i,j=1}^3 ((-\Delta)^{-1} \partial_i\partial_j(u^i u^j-b^i b^j) |\mathrm{div_\h}u^\h)_{L^2}=A_{1}+\cdots+A_{6},
\end{align*}
where
\begin{align*}
	A_{1}=&  -\sum_{i,j=1}^2 ((-\Delta)^{-1} \partial_i\partial_j(u^i u^j) |\mathrm{div_\h}u^\h)_{L^2}, &&
	A_{2}=  -2\sum_{j=1}^2 ((-\Delta)^{-1} \partial_3\partial_j(u^3 u^j) |\mathrm{div_\h}u^\h)_{L^2},\\
	A_{3}= & 2((-\Delta)^{-1} \partial_3(u^3 \mathrm{div_h}u^\h) |\mathrm{div_\h}u^\h)_{L^2},&&A_{4}= \sum_{i,j=1}^2 ((-\Delta)^{-1} \partial_i\partial_j(b^i b^j) |\mathrm{div_\h}u^\h)_{L^2}
	,\\
	A_{5}=&  2\sum_{j=1}^2 ((-\Delta)^{-1} \partial_3\partial_j(b^3 b^j) |\mathrm{div_\h}u^\h)_{L^2} , &&A_{6}=  -2((-\Delta)^{-1} \partial_3(b^3 \mathrm{div_h}b^\h) |\mathrm{div_\h}u^\h)_{L^2}.
\end{align*}
To $A_{1}$ and $A_{2}$, using Hölder's inequality, Young's inequality and \eqref{yi} of Lemma \ref{tool} with $f=u^\h$(resp. $u^3$), $g=u^\h$(resp. $u^\h$),  we get
\begin{align}\label{A1}
	A_{1}\lesssim&\||u^\h|^2\|_{L^2} \|\nablah u^\h\|_{L^2} \lesssim\|\nablah u^\h\|_{H^{1}}\|u^\h\|_{H^{1}}\|\nablah u^\h\|_{L^2}
	\nonumber \\
	\leqq&\frac{1}{100}\|\nablah u^\h\|_{L^2}^2+C\|\nablah u^\h\|_{H^{1}}^2\|u^\h\|_{H^1}^2,\\
	\label{A2(v)}
	A_{2}\lesssim& \|u^\h u^3\|_{L^2}\|\nablah u^\h\|_{L^2}\lesssim\|\nablah u^3\|_{H^{1}}^\frac12\|u^3\|_{H^{1}}^\frac12\|\nablah u^\h\|_{L^{2}}^\frac12\|u^\h\|_{L^{2}}^\frac12\|\nablah u^\h\|_{L^2}\nonumber\\
	\leqq&\frac{1}{100}\|\nablah u^\h\|_{L^2}^2+C\|\nablah u^3\|_{H^{1}}^2\|u^\h\|_{L^2}^2+C\|\nablah u^\h\|_{L^2}^2\|u^3\|_{H^{1}}^2.
\end{align}
Using  Young's inequality again, we also obtain the another estimate of $A_{2}$:
\begin{align}\label{A2(h)}
	A_{2}\leqq&\frac{1}{100}\|\nablah u^\h\|_{L^2}^2+\|\nablah u^3\|_{H^{1}}^2\|u^3\|_{H^{1}}^2\|u^\h\|_{L^2}^2.
\end{align}
And for $A_{3}$, by \eqref{L^infty_1D} and \eqref{GNS2d}, we obtain
\begin{align}\label{A3}
	A_{3}\lesssim&\|u^3 \mathrm{div_h}u^\h\| _{L^{\frac43}_\h(L^2_\mathrm{v})}\|(-\Delta)^{-1} \partial_3\mathrm{div_h}u^\h\| _{L^{4}_\h(L^2_\mathrm{v})} \nonumber\\
	\lesssim&\|u^3 \|_{L^{2}_\h(L^\infty_\mathrm{v})}\|\mathrm{div_h}u^\h\| _{L^{4}_\h(L^2_\mathrm{v})}\|u^\h\| _{L^{4}_\h(L^2_\mathrm{v})}\nonumber\\
	\lesssim&\|u^3\|_{L^2}^\frac12\|\partial_3u^3\|_{L^2}^\frac12\|u^\h\|_{L^2}^\frac12\|\nablah u^\h\|_{L^2}\|\nablah^2u^\h\|_{L^2}^\frac12\nonumber\\
	\lesssim&\|u^3\|_{L^2}^\frac12\|u^\h\|_{L^2}^\frac12\|\nablah^2u^\h\|_{L^2}^\frac12\|\nablah u^\h\|_{L^2}^\frac32\nonumber\\
	\leqq&\frac{1}{100}\|\nablah u^\h\|_{L^2}^2+C\|\nablah u^\h\|_{H^{1}}^2\|u^3\|_{L^2}^2\|u^\h\|_{L^2}^2.
\end{align}
In view of $A_{1}-A_{3}$, we bound
\begin{align}\label{A4}
	A_{4}\lesssim &\||b^\h|^2\|_{L^2} \|\nablah u^\h\|_{L^2}\lesssim\|\partial_1b^\h\|_{H^1}^\frac12\|\partial_2b^\h\|_{L^{2}}^\frac12\|b^\h\|_{H^{1}}\|\nablah u^\h\|_{L^2} \nonumber\\
	\leqq&\frac{1}{100}\|\nablah u^\h\|_{L^2}^2+C\left(\|\partial_1b^\h\|_{H^2}^2+\|\partial_2b^\h\|_{H^{1}}^2\right)\|b^\h\|_{H^{2}}^2,\\ \label{A5(v)}
	A_{5}\lesssim&\|b^\h b^3\|_{L^2}\|\nablah u^\h\|_{L^2} \lesssim\|\partial_1b^\h\|_{H^1}^\frac12\|\partial_2b^3\|_{L^{2}}^\frac12\|b^3\|_{L^{2}}^\frac12\|b^\h\|_{H^{1}}^\frac12\|\nablah u^\h\|_{L^2}\nonumber\\
	\leqq&\frac{1}{100}\|\nablah u^\h\|_{L^2}^2+C\|\partial_1b^\h\|_{H^1}^2\|b^3\|_{L^{2}}^2+C\|\partial_2b^3\|_{L^{2}}^2\|b^\h\|_{H^{1}}^2,\\
	\label{A6(v)}
	A_{6}\lesssim&\|b^3\|_{L^2}^\frac12\|\nablah^2b^\h\|_{L^2}^\frac12\|\nablah b^\h\|_{L^2}\|u^\h\|_{L^2}^\frac12\|\nablah u^\h\|_{L^2}^\frac12\nonumber\\
	\leqq&\frac{1}{100}\left(\|\partial_1b^\h\|_{H^2}^2 +\|\partial_2b^\h\|_{H^1}^2\right)+C\|(\partial_1b^\h,\partial_2b^\h)\|_{H^1}^2\|u^\h\|_{L^2}^2+\|\nablah u^\h\|_{L^2}^2\|b^3\|_{L^2}^2.
\end{align}
Using  Young's inequality again, we also obtain the another estimate of $A_{5}$ and $A_6$:
\begin{align}\label{A5(h)}
	A_{5}\leqq&\frac{1}{100}\|(\nablah u^\h,\partial_1b^\h)\|_{L^2}^2+C\|\partial_2b^3\|_{H^1}^2\|b^3\|_{L^{2}}^2\|b^\h\|_{H^{1}}^2,\\
	\label{A6(h)}
	A_{6}\leqq&\frac{1}{100}\left(\|\partial_1b^\h\|_{H^2}^2 +\|\partial_2b^\h\|_{H^1}^2\right)+C\|\nablah u^\h\|_{L^2}^2\|u^\h\|_{L^2}^2\|b^3\|_{L^2}^2.
\end{align}
Combining \eqref{A1}, \eqref{A2(v)}, \eqref{A3}, \eqref{A4}, \eqref{A5(v)} and \eqref{A6(v)}, we get
\begin{align}\label{A(v)}
	A\leqq&\frac{1}{20}\|(\nablah u^\h,\partial_1b^\h)\|_{H^2}^2 +\frac{1}{100}\|\partial_2b^\h\|_{H^1}^2+C\|\nablah (u^\h,u^3)\|_{H^{1}}^2\|u^\h\|_{L^2}^2+C\|\partial_1b^\h\|_{H^1}^2\|b^3\|_{L^{2}}^2\nonumber\\&+C\left(\|\partial_1b^\h\|_{H^2}^2+\|(\partial_2b^\h,\partial_2b^3)\|_{H^{1}}^2\right)\|b^\h\|_{H^{2}}^2+C\|\nablah u^\h\|_{H^1}^2\left(1+\| u^\h\|_{H^1}^2\right)\|u^3\|_{H^{1}}^2.
\end{align}
Inserting \eqref{A(v)} into \eqref{u3b3l2}, this yields
\begin{align}\label{Esti.v.L2}
	\frac12&\frac{\mathrm{d}}{\dt}  \|(u^3,b^3)\|_{L^2}^2 +\|(\nabla_\h u^3,\partial_1 b^3)\|_{ L^2}^2\nonumber\\
	&\leqq \frac{1}{20}\|(\nablah u^\h,\partial_1b^\h)\|_{H^2}^2 +\frac{1}{100}\|\partial_2b^\h\|_{H^1}^2+C\|\nablah (u^\h,u^3)\|_{H^{1}}^2\|u^\h\|_{L^2}^2\nonumber\\&+C\|\partial_1b^\h\|_{H^1}^2\|b^3\|_{L^{2}}^2+C\left(\|\partial_1b^\h\|_{H^2}^2+\|(\partial_2b^\h,\partial_2b^3)\|_{H^{1}}^2\right)\|b^\h\|_{H^{2}}^2\nonumber\\
	&+C\|\nablah u^\h\|_{H^1}^2\left(1+\| u^\h\|_{H^1}^2\right)\|u^3\|_{H^{1}}^2.
\end{align}

Nest, let us treat the estimate of $\sum_{i=1}^3\|\partial_{i}^2(u^3,b^3)\|_{L^2}^{2}$.  We first get, by applying $\partial_{i}^2(i=1,2,3)$ to the equations of  $u^3$ and $b^3$ of \eqref{eq.MHD}  and then taking the $L^2$ inner product to it with $\partial_{i}^2u^3$ and $\partial_{i}^2b^3$, respectively, that
\begin{align}\label{en_H2_v}
	\frac12\frac{\mathrm{d}}{\mathrm{d}t}\sum_{i=1}^{3}\|\partial_{i}^2(u^3,b^3)\|_{L^2}^2+\sum_{i=1}^{3}\|\partial_{i}^2(\nabla_{\mathrm{h}}u^3,\partial_{1}b^3)\|_{L^2}^2&=I_{1}+\cdots+I_{8},
\end{align}
where
\begin{align*}
	I_{1}
	=&-\left(\partial_{i}^2u^j\partial_ju^3+2\partial_{i}u^j\partial_j \partial_{i}u^3|\partial_{i}^2u^3\right)_{L^2},&&I_{2}=\left(\partial_{i}^2b^j\partial_jb^3+2\partial_{i}b^j\partial_j \partial_{i}b^3|\partial_{i}^2u^3\right)_{L^2},\\
	I_{3}
	=&-\left(\partial_{i}^2u^j\partial_jb^3+2\partial_{i}u^j\partial_j \partial_{i}b^3|\partial_{i}^2b^3\right)_{L^2},&&I_{4}=\left(\partial_{i}^2b^j\partial_ju^3+2\partial_{i}b^j\partial_j \partial_{i}u^3|\partial_{i}^2b^3\right)_{L^2},\\
	I_{5}
	=&\left(-u\cdot\nabla \partial_{i}^2u^3+b\cdot\nabla \partial_{i}^2b^3|\partial_{i}^2u^3\right)_{L^2},&&I_{6}=\left(-u\cdot\nabla \partial_{i}^2b^3+b\cdot\nabla \partial_{i}^2u^3|\partial_{i}^2b^3\right)_{L^2},\\
	I_{7}=&\left( \partial_{i}^2\partial_{2}b^3|\partial_{i}^2u^3 \right)_{L^2}+\left( \partial_{i}^2\partial_{2}u^3|\partial_{i}^2b^3 \right)_{L^2} ,&&I_{8}=-\left(\partial_{i}^2\partial_{3}p|\partial_{i}^2u^3\right)_{L^2} \quad \mathrm{with}\quad i,j=1,2,3.
\end{align*}
By virtue of $\partial_3u^3=-\mathrm{div_h}u^\h$, the expression for $I_1$, which exhibits numerous repeated terms, is simplified as follows
\begin{align}\label{I1rewrite}
	I_{1}\leqq& C\int_{\R^3}|\nablah^2u^3\nablah u^\h \nablah^2 u^3|+|\nablah^2u^\h\nablah u^3 \nablah^2 u^3|+|\partial_3^2u^\h\nablah u^3 \nablah\partial_3 u^\h|\dx\nonumber\\
	&+C\int_{\R^3}|\nablah\nabla u^\h\nabla u^\h \nablah \partial_3u^\h|\dx\eqdefa I_{11}+\cdots+I_{14}.
\end{align}
Applying \eqref{er} with $f=u^3$ (resp. $u^3,u^\h,u^\h$), $g=u^{\mathrm{h}}$ (resp. $\nablah u^\h,u^3,u^\h$)  and $h=\nabla_{\mathrm{h}}u^3$ (resp. $u^{3},\partial_3u^\h,\nablah u^\h$), it holds that
\begin{align*}
	I_{11}\lesssim&\|\partial_1u^3\|_{H^2}^\frac12\|u^3\|_{H^2}^\frac12\|\nablah u^\h\|_{H^1}\|\nablah u^3\|_{H^2}^\frac12\|u^3\|_{H^2}^\frac12\lesssim \|\nablah u^3\|_{H^2}\|\nablah u^\h\|_{H^2}\| u^3\|_{H^2},\\
	I_{12}\lesssim&\|\partial_1u^3\|_{H^2}^\frac12\|u^3\|_{H^2}^\frac12\|\nablah u^\h\|_{H^2}\|\nablah u^3\|_{H^1}^\frac12\|u^3\|_{H^1}^\frac12\lesssim \|\nablah u^3\|_{H^2}\|\nablah u^\h\|_{H^2}\| u^3\|_{H^2},\\
	I_{13}\lesssim&\|\partial_1u^\h\|_{H^2}^\frac12\|u^\h\|_{H^2}^\frac12\|\nablah u^3\|_{H^1}\|\nablah u^\h\|_{H^2}^\frac12\|u^\h\|_{H^2}^\frac12\lesssim \|\nablah u^3\|_{H^2}\|\nablah u^\h\|_{H^2}\| u^\h\|_{H^2},\\
	I_{14}\lesssim&\|\partial_1u^\h\|_{H^2}^\frac12\|u^\h\|_{H^2}^\frac12\|\nablah u^\h\|_{H^1}\|\nablah u^\h\|_{H^2}^\frac12\|u^\h\|_{H^2}^\frac12\lesssim \|\nablah u^\h\|_{H^2}\|\nablah u^\h\|_{H^2}\| u^\h\|_{H^2}.\\
\end{align*}
In a conclusion, $I_{1}$ can be handled as
\begin{align}\label{I_1}
	I_{1}&\lesssim \|\nablah u^3\|_{H^2}\|\nablah u^\h\|_{H^2}\| (u^\h,u^3)\|_{H^2}+\|\nablah u^\h\|_{H^2}^2\| u^\h\|_{H^2}\nonumber\\
	&\leqq\frac{1}{100}\|(\nablah u^\h,\nablah u^3)\|_{H^{2}}^{2}+C\|\nablah u^\h\|_{H^2}^2\| (u^\h,u^3)\|_{H^2}^2.
\end{align}
In the same argument, as to $I_2$, we have
\begin{align*}
	I_2\leqq &C\int_{\R^3}|\nablah^2b^3\nablah b^\h \nablah^2 u^3|+|\nablah^2b^\h\nablah b^3 \nablah^2 u^3|+|\partial_3^2b^\h\nablah b^3 \nablah\partial_3 u^\h|\dx\\
	&+C\int_{\R^3}|\nablah\nabla b^\h\nabla b^\h \nablah \nabla u^\h|\dx\eqdefa I_{21}+\cdots+I_{24}.
\end{align*}
Making the best of \eqref{er} with $f=b^3$ (resp. $b^\h,b^\h$), $g=b^\h$ (resp. $ b^3,b^3$)  and $h=\nabla_{\mathrm{h}}u^3$ (resp. $\nablah u^{3},\partial_3u^\h$), we obtain
\begin{align*}
	I_{21}\lesssim&\|\partial_1b^3\|_{H^2}^\frac12\|(\partial_1b^3,\partial_2b^3)\|_{H^1}^\frac12\|b^\h\|_{H^2}\|\nablah u^3\|_{H^2},\\
	I_{22}\lesssim&\|\partial_1b^\h\|_{H^2}^\frac12\|b^\h\|_{H^2}^\frac12\|(\partial_1b^3,\partial_2b^3)\|_{H^1}\|\nablah u^3\|_{H^2}^\frac12\|u^3\|_{H^2}^\frac12,\\
	I_{23}\lesssim&\|\partial_1b^\h\|_{H^2}^\frac12\|b^\h\|_{H^2}^\frac12\|(\partial_1b^3,\partial_2b^3)\|_{H^1}\|\nablah u^\h\|_{H^2}^\frac12\|u^\h\|_{H^2}^\frac12.
\end{align*}
For $I_{24}$, utilizing \eqref{L^infty_1D}, we have
\begin{align*}
	I_{24}\lesssim&\|\nablah\nabla b^\h\|_{L^\infty_{x_1}L^2_{x_2}L^2_{x_3}}\|\nabla b^\h \|_{L^2_{x_1}L^\infty_{x_2}L^2_{x_3}}\|\nablah \nabla u^\h\|_{L^2_{x_1}L^2_{x_2}L^\infty_{x_3}}\\
	\lesssim&\|\partial_1b^\h\|_{H^2}^\frac12\|b^\h\|_{H^2}^\frac12\|\partial_2b^\h\|_{H^1}^\frac12\|b^\h\|_{H^2}^\frac12\|\nablah u^\h\|_{H^2}.
\end{align*}
To a conclusion, $I_{2}$ can be estimated as
\begin{align}\label{I_2}
	I_2\lesssim& \left(\|\partial_1b^3\|_{H^2}^\frac12\|(\partial_1b^3,\partial_2b^3)\|_{H^1}^\frac12\|\nablah u^3\|_{H^2}+\|\partial_1b^\h\|_{H^2}^\frac12\|\partial_2b^\h\|_{H^1}^\frac12\|\nablah u^\h\|_{H^2}\right)\|b^\h\|_{H^2}\nonumber\\
	&+\|\partial_1b^\h\|_{H^2}^\frac12\|b^\h\|_{H^2}^\frac12\|(\partial_1b^3,\partial_2b^3)\|_{H^1}\left(\|\nablah u^\h\|_{H^2}^\frac12\|u^\h\|_{H^2}^\frac12+\|\nablah u^3\|_{H^2}^\frac12\|u^3\|_{H^2}^\frac12\right)\nonumber\\
	\leqq&\frac{1}{100}\left(\|(\partial_1b^\h,\partial_1b^3)\|_{H^2}^2+\|(\partial_2b^\h,\partial_2b^3)\|_{H^1}^2\right)+C\|(\nablah u^\h,\nablah u^3)\|_{H^2}^2\|b^\h\|_{H^2}^2\nonumber\\
	&+C\|\partial_1b^\h\|_{H^2}^2\|(u^\h,u^3)\|_{H^2}^2.
\end{align}
Indeed, for the specified terms of $I_{3},I_{4}$, a subset of them admit estimates identical to those for term $I_{2}$. Furthermore, there are repeated terms among them as well. Thus, we consolidate them into the following expressions
\begin{align*}
	I_{3}+I_{4}\leqq &C\int_{\R^3}|\nablah^2b^3\nablah b^\h \nablah^2 u^3|+|\nablah^2b^3\nablah b^3 \nablah^2 u^\h|+|\nablah\partial_3b^\h\nablah b^3 \partial_3^2 u^\h|\dx\\
	&+C\int_{\R^3}|\nablah^2b^\h \nablah u^3\nablah^2 b^3|+|\nablah^2b^3\nablah u^\h\nablah^2 b^3 |+|\partial_3^2b^\h\nablah u^3\nablah\partial_3b^\h|\dx
	\\
	&+C\int_{\R^3}|\nablah\nabla b^\h\nabla b^\h \nablah \nabla u^\h|+|\nablah\nabla b^\h\nabla u^\h\nablah\partial_3 b^\h  |\dx\eqdefa I_{31}+\cdots +I_{38},
\end{align*}
into the following terms, where the term $I_{31},I_{37}$ share same estimate with $I_{21},I_{24}$, respectively. And then we treat the other terms. For $I_{32}$, by getting the utmost of \eqref{er} with $f=b^3, g=b^3, h=\nablah u^\h$, we deduce that
\begin{align*}
	I_{32}\lesssim\|\partial_1b^3\|_{H^2}^\frac12\|(\partial_1b^3,\partial_2b^3)\|_{H^1}^\frac12\|b^3\|_{H^2}\|\nablah u^\h\|_{H^2}
	\lesssim\left(\|\partial_1b^3\|_{H^2}+\|\partial_2b^3\|_{H^1}\right)\|b^3\|_{H^2}\|\nablah u^\h\|_{H^2}.
\end{align*}
In view of $I_{24}$, taking full advantage of  \eqref{L^infty_1D}, we achieve
\begin{align*}
	I_{33}\lesssim &\|\nablah\partial_3b^\h\|_{L^\infty_{x_1}L^2_{x_2}L^2_{x_3}}\|\nablah b^3 \|_{L^2_{x_1}L^2_{x_2}L^\infty_{x_3}}\|\partial_3^2 u^\h \|_{L^2_{x_1}L^\infty_{x_2}L^2_{x_3}}\\
	\lesssim&\|\partial_1b^\h\|_{H^2}^\frac12\|b^\h\|_{H^2}^\frac12\|(\partial_1b^3,\partial_2b^3)\|_{H^1}\|\nablah u^\h\|_{H^2}^\frac12\| u^\h\|_{H^2}^\frac12,\\
	I_{38}\lesssim &\|\nablah\nabla b^\h\|_{L^\infty_{x_1}L^2_{x_2}L^2_{x_3}}\|\nablah\partial_3 b^\h \|_{L^2}\|\nabla u^\h \|_{L^2_{x_1}L^\infty_{x_2}L^\infty_{x_3}}\\
	\lesssim&\|\partial_1b^\h\|_{H^2}^\frac12\|b^\h\|_{H^2}^\frac12\|(\partial_1b^\h,\partial_2b^\h)\|_{H^1}\|\nablah u^\h\|_{H^2}^\frac12\| u^\h\|_{H^2}^\frac12.
\end{align*}
With respect to $I_{34}-I_{36}$, by H\"older's inequality and Sobolev embedding theorem, it implies
\begin{align*}
	I_{34}\lesssim&\|\nablah^2b^\h \|_{L^2}\|\nablah u^3\|_{L^\infty}\|\nablah^2 b^3\|_{L^2}\lesssim\|b^\h \|_{H^2}\|\nablah u^3\|_{H^2}\|(\partial_1b^3,\partial_2b^3)\|_{H^1},\\
	I_{35}\lesssim&\|\nablah^2b^3 \|_{L^2}\|\nablah u^\h\|_{L^\infty}\|\nablah^2 b^3\|_{L^2}\lesssim\|(\partial_1b^3,\partial_2b^3)\|_{H^1}\|\nablah u^\h\|_{H^2}\|b^3 \|_{H^2},\\
	I_{36}\lesssim&\|\partial_3^2b^\h \|_{L^2}\|\nablah u^3\|_{L^\infty}\|\nablah\partial_3 b^3\|_{L^2}\lesssim\|b^\h \|_{H^2}\|\nablah u^3\|_{H^2}\|(\partial_1b^3,\partial_2b^3)\|_{H^1}.
\end{align*}
As a result, we have
\begin{align}\label{I_3,I_4}
	I_{3}+I_{4}\lesssim&\left(\|\partial_1b^\h\|_{H^2}^\frac12\|\partial_2b^\h\|_{H^1}^\frac12\|\nablah u^\h\|_{H^2}+\left(\|\partial_1b^3\|_{H^2}+\|\partial_2b^3\|_{H^1}\right)\|\nablah u^3\|_{H^2}\right)\|b^\h\|_{H^2}\nonumber\\
	&+\left(\|(\partial_1b^\h,\partial_1b^3)\|_{H^2}+\|(\partial_2b^\h,\partial_2b^3)\|_{H^1}\right)\|\nablah u^\h\|_{H^2}^\frac12\|\partial_1b^\h\|_{H^2}^\frac12\| u^\h\|_{H^2}^\frac12\|b^\h\|_{H^2}^\frac12\nonumber\\
	&+\left(\|\partial_1b^3\|_{H^2}+\|\partial_2b^3\|_{H^1}\right)\|\nablah u^\h\|_{H^2}\|b^3 \|_{H^2}\nonumber\\
	\leqq& \frac{1}{100}\left(\|(\partial_1b^\h,\partial_1b^3)\|_{H^2}^2+\|(\partial_2b^\h,\partial_2b^3)\|_{H^1}^2\right)+C\|(\nablah u^\h,\nablah u^3)\|_{H^2}^2\|b^\h\|_{H^2}^2\nonumber\\
	&+C\|\partial_1b^\h\|_{H^2}^2\| u^\h\|_{H^2}^2+C\|\nablah u^\h\|_{H^2}^2\|b^3 \|_{H^2}^2.
\end{align}
By integration by parts and divergence free condition of $u$ and $b$, it implies $I_{5}+I_{6}=I_{7}=0$. At last, for $I_{8}$, by H\"older's inequality, we find
\begin{align*}
	I_{8}&=-
	\left(\partial_{i}^2p|\partial_{i}^2\mathrm{div}_\h u^\h\right)_{L^2}\lesssim\|\partial_{i}^2p\|_{L^2}\|\nabla_{\mathrm{h}}u^\mathrm{h}\|_{H^{2}}.
\end{align*}
And then, we note that \eqref{ppp}
and  $\mathrm{div}u=0$, it is easy to find
\begin{align*}
	\|\partial_{i}^2p\|_{L^2}=\|\partial_{i}^2(-\Delta)^{-1}\mathrm{divdiv}(u\otimes u-b\otimes b)\|\lesssim\|\partial_iu^j\partial_ju^i-\partial_ib^j\partial_jb^i\|_{L^2},
\end{align*}
where $i,j=1,2,3$. Applying \eqref{yi} with $f=\nablah u^\h$ (resp. $\partial_3 u^\h$) and $g=\nablah u^\h$ (resp. $\nablah u^3$), it holds
\begin{align*}
	\|\partial_iu^j\partial_ju^i\|_{L^2}\lesssim&\||\nablah u^\h|^2\|_{L^2}+\|\partial_3u^\h\cdot\nablah u^3\|_{L^2}\\
	\lesssim&\|\partial_1u^\h\|_{H^2}^\frac12\|\partial_2u^\h\|_{H^1}^\frac12\|u^\h\|_{H^2}+\|\partial_1u^\h\|_{H^2}^\frac12\|\partial_2u^3\|_{H^1}^\frac12\|u^\h\|_{H^2}^\frac12\|u^3\|_{H^2}^\frac12.
\end{align*}
By directly substituting $u$ with $b$ in the above estimates, we immediately arrive at
\begin{align*}
	\|\partial_ib^j\partial_jb^i\|_{L^2}\lesssim\|\partial_1b^\h\|_{H^2}^\frac12\|\partial_2b^\h\|_{H^1}^\frac12\|b^\h\|_{H^2}+\|\partial_1b^\h\|_{H^2}^\frac12\|\partial_2b^3\|_{H^1}^\frac12\|b^\h\|_{H^2}^\frac12\|b^3\|_{H^2}^\frac12.
\end{align*}
Combining the above estimates, we obtain
\begin{align}\label{P_H2}
	\|\partial_{i}^2p\|_{L^2}\lesssim&\|\nablah u^\h\|_{H^2}\|u^\h\|_{H^2}+ \|\nablah u^\h\|_{H^2}^\frac12\|\nablah u^3\|_{H^2}^\frac12\|u^\h\|_{H^2}^\frac12\|u^3\|_{H^2}^\frac12 \nonumber\\
	&+\|\partial_1b^\h\|_{H^2}^\frac12\|\partial_2b^\h\|_{H^1}^\frac12\|b^\h\|_{H^2}+\|\partial_1b^\h\|_{H^2}^\frac12\|\partial_2b^3\|_{H^1}^\frac12\|b^\h\|_{H^2}^\frac12\|b^3\|_{H^2}^\frac12.
\end{align}
Hence, we conclude that
\begin{align}\label{ii}
	I_8\lesssim&\left(\|\nablah u^\h\|_{H^2}\|u^\h\|_{H^2}+ \|\nablah u^\h\|_{H^2}^\frac12\|\nablah u^3\|_{H^2}^\frac12\|u^\h\|_{H^2}^\frac12\|u^3\|_{H^2}^\frac12 \right)\|\nablah u^\h\|_{H^{2}}\nonumber\\
	&+\left(\|\partial_1b^\h\|_{H^2}^\frac12\|\partial_2b^\h\|_{H^1}^\frac12\|b^\h\|_{H^2}+\|\partial_1b^\h\|_{H^2}^\frac12\|\partial_2b^3\|_{H^1}^\frac12\|b^\h\|_{H^2}^\frac12\|b^3\|_{H^2}^\frac12\right)\|\nablah u^\h\|_{H^{2}}.
\end{align}
Further, by Young's inequality, we find
\begin{align}\label{I_8v}
	I_8	\leqq& \frac{1}{100}\left( \|(\nablah u^\h,\nablah u^3,\partial_1b^\h)\|_{H^2}^2+\|(\partial_2b^\h,\partial_2b^3)\|_{H^1}^2\right)\nonumber\\
	&+C\|\nablah u^\h\|_{H^{2}}^2\left(\|(u^\h,b^\h)\|_{H^2}^2+\|(u^3,b^3)\|_{H^2}^2\right).
\end{align}
Combining \eqref{u3b3l2}, \eqref{I_1}, \eqref{I_2}, \eqref{I_3,I_4} and \eqref{I_8v}, we get \eqref{u3b3H2}.

\subsection{The proof of \eqref{p2bH1}}
We only need to obtain the estimate of $\|(\partial_2b^\h,\partial_2 b^3)\|_{L^2}$ and $\sum_{i=1}^3\|(\partial_i\partial_2b^\h,\partial_i\partial_2 b^3)\|_{L^2}$ owing to \eqref{Hsdef}. By taking $L^2$ inner product to $u$ equation of \eqref{eq.MHD} with $\partial_2b$, it yields
\begin{align}\label{p2b_enl2}
	\|(\partial_2b^\h,\partial_2 b^3)\|_{L^2}^2	=&\left(\partial_t u^\h+ u\cdot\nabla u^\h -\Delta_{\h}u^\h- b\cdot\nabla b^\h+\nablah p|\partial_2 b^\h\right)_{L^2}\nonumber\\
	&+\left(\partial_t u^3+ u\cdot\nabla u^3 -\Delta_{\h}u^3- b\cdot\nabla b^3+\partial_3 p|\partial_2 b^3\right)_{L^2} \eqdefa F_1+\cdots+F_{10}.
\end{align}
For $F_1$ and $F_6$, by integration by parts and \eqref{ha}, we observe
\begin{align}\label{f1deco}
	F_1 &=\int \frac{\mathrm{d}}{\mathrm{d} t}\left(u^\h\partial_2 b^\h\right)+\partial_2u^\h\partial_t b^\h \mathrm{d} x\nonumber\\&
	=\int \frac{\mathrm{d}}{\mathrm{d} t}\left(u^\h\partial_2 b^\h\right)\mathrm{d} x+\int\partial_2 u^\h\left(-u\cdot\nabla b^\h +\partial_1^2 b^\h+b\cdot\nabla u^\h +\partial_2 u^\h\right)\mathrm{d} x,
\end{align}
and
\begin{align*}
	F_6 &=\int \frac{\mathrm{d}}{\mathrm{d} t}\left(u^3\partial_2 b^3\right)+\partial_2u^3\partial_t b^3 \mathrm{d} x\\&
	=\int \frac{\mathrm{d}}{\mathrm{d} t}\left(u^3\partial_2 b^3\right)\mathrm{d} x+\int\partial_2 u^3\left(-u\cdot\nabla b^3 +\partial_1^2 b^3+b\cdot\nabla u^3 +\partial_2 u^3\right)\mathrm{d} x.
\end{align*}
Applying H\"older's inequality and \eqref{yi} with $f=\nabla b^\h$ (resp. $b$), $g=u$ (resp. $\nabla u^\h$), we have
\begin{align}\label{f_1}
	\int_{\R^3}\partial_2 &u^\h\left(-u\cdot\nabla b^\h +b\cdot\nabla u^\h\right)\mathrm{d} x+\int_{\R^3}\partial_2 u^\h\left(\partial_1^2 b^\h +\partial_2 u^\h\right)\mathrm{d}x\nonumber\\
	&\leqq \|\partial_2u^\h\|_{L^2}\left(\|u\cdot\nabla b^\h\|_{L^2}+\|b\cdot\nabla u^\h\|_{L^2}\right)+\|\partial_2u^\h\|_{L^2}\|\partial_1^2 b^\h\|_{L^2}+\|\partial_2u^\h\|_{L^2}^2\nonumber\\
	&\leqq C\|\nablah u^\h\|_{H^2}\|(\nablah u^\h,\nablah u^3)\|_{H^2}^\frac12\|\partial_1 b^\h\|_{H^2}^\frac12\|( u^\h, u^3)\|_{H^2}^\frac12\|b^\h\|_{L^2}^\frac12+\|\nablah u^\h\|_{H^2}\|\partial_1 b^\h\|_{H^2}\nonumber\\
	&\quad+C\|\nablah u^\h\|_{H^2}\|(\partial_1b^\h,\partial_1b^3)\|_{H^2}^\frac12\|\nablah u^\h\|_{H^2}^\frac12\|(b^\h,b^3)\|_{H^2}^\frac12\| u^\h\|_{H^2}^\frac12+\|\nablah u^\h\|_{H^2}^2.
\end{align}
Then, applying $\mathrm{div} u=\mathrm{div} b=0$, H\"older's inequality and Sobolev embedding theorem, we find that
\begin{align*}
	\int_{\R^3}\partial_2 &u^3\left(b^\h \cdot\nabla_\h u^3- b^3\mathrm{div_h}u^\h -u^\h\cdot\nabla_\h b^3+u^3\mathrm{div_h} b^\h \right)\mathrm{d} x+\int_{\R^3}\partial_2 u^3\left(\partial_1^2 b^3 +\partial_2 u^3\right)\mathrm{d}x
	\\&\leqq\|\partial_2 u^3\|_{L^\infty}\left( \|b^\h\|_{L^2}\|\nabla_\h u^3\|_{L^2}+\|b^3\|_{L^2}\|\mathrm{div_h}u^\h\|_{L^2}+\|u^\h\|_{L^2}\|\nabla_\h b^3\|_{L^2}\right)\\
	&\quad+\|\partial_2 u^3\|_{L^\infty}\|u^3\|_{L^2}\|\mathrm{div_h} b^\h\|_{L^2}+\|\partial_2u^3\|_{L^2}\|\partial_1^2 b^3\|_{L^2}+\|\partial_2u^3\|_{L^2}^2\\
	&\leqq C\|\nablah u^\h\|_{H^2}\left( \|\nabla_\h u^3\|_{L^2}\|b^\h\|_{L^2}+\|\nabla_\h u^\h\|_{L^2}\|b^3\|_{L^2}+\|(\partial_1 b^3,\partial_2 b^3)\|_{L^2}\|u^\h\|_{L^2}\right)\\
	&\quad+C\|\nablah u^\h\|_{H^2}\left(\|\partial_1 b^\h\|_{L^2}+\|\partial_2 b^\h\|_{L^2}\right)\|u^3\|_{L^2}+\|\nablah u^3\|_{L^2}\|\partial_1 b^3\|_{H^1}+\|\nablah u^3\|_{L^2}^2.
\end{align*}
Hence, by Young inequality, we have
\begin{align}\label{F1}
	F_1&\leqq\frac{\mathrm{d}}{\mathrm{d} t}\int_{\R^3} u^\h\partial_2 b^\h\mathrm{d} x+(\frac32+\frac{1}{100})\|\nablah u^\h\|_{H^2}^2+\frac12\|\partial_1 b^\h\|_{H^2}^2+C\|\nablah u^3\|_{H^2}^2\|b^\h\|_{H^2}^2\nonumber\\
	&\quad+C\|\partial_1 b^\h\|_{H^2}^2\| u^3\|_{H^2}^2+C\|(\partial_1b^\h,\partial_1b^3)\|_{H^2}^2\| u^\h\|_{H^2}^2+C\|\nablah u^\h\|_{H^2}^2\|(b^\h,b^3)\|_{H^2}^2,\\\label{F6}
	F_6&\leqq\frac{\mathrm{d}}{\mathrm{d} t}\int_{\R^3}u^3\partial_2 b^3\mathrm{d} x+\frac32\|\nablah u^3\|_{H^2}^2+\frac12\|\partial_1 b^3\|_{H^2}^2+\frac{1}{100}\left( \|\nablah u^\h\|_{H^2}^2+\|\partial_2 b^\h\|_{H^1}^2\right)\nonumber\\
	&\quad+C\|\nabla_\h u^3\|_{H^2}^2\|b^\h\|_{H^2}^2+C\|\nabla_\h u^\h\|_{H^2}^2\|b^3\|_{H^2}^2+C\|(\nablah u^\h,\partial_1 b^\h)\|_{H^2}^2\|u^3\|_{H^2}^2\nonumber\\
	&\quad+C\left(\|\partial_1 b^3\|_{H^2}^2+\|\partial_2 b^3\|_{H^1}^2\right)\|u^\h\|_{H^2}^2.
\end{align}
To $F_2$ and $F_4$, by applying \eqref{yi} with $f=\nabla u^\h$ (resp. $\nabla b^\h$) and $g=u$ (resp. $b$), we obtain
\begin{align}\label{f_2}
	F_2\lesssim &\|\partial_2 b^\h\|_{L^2}\|u\cdot \nabla u^\h\|_{L^2}\nonumber\\
	\lesssim &\|\partial_2 b^\h\|_{H^1}\|\nablah u^\h\|_{H^2}^\frac12\|(\nablah u^\h,\nablah u^3)\|_{H^2}^\frac12\| u^\h\|_{H^2}^\frac12\|( u^\h, u^3)\|_{H^2}^\frac12,\\
	\label{f_4}
	F_4\lesssim &\|\partial_2 b^\h\|_{L^2}\|b\cdot \nabla b^\h\|_{L^2}\nonumber\\
	\lesssim &\|\partial_2 b^\h\|_{H^1}\|\partial_1 b^\h\|_{H^2}^\frac12\|(\partial_2 b^\h,\partial_2 b^3)\|_{H^1}^\frac12\| b^\h\|_{H^2}^\frac12\|( b^\h, b^3)\|_{H^2}^\frac12.
\end{align}
By Young's inequality, we bound $F_2$ and $F_4$ as
\begin{align}\label{F2}
	F_2\leqq &\frac{1}{100} \|\partial_2 b^\h\|_{H^1}^2+C\|(\nablah u^\h,\nablah u^3)\|_{H^2}^2\| u^\h\|_{H^2}^2+\|\nablah u^\h\|_{H^2}^2\| u^3\|_{H^2}^2,\\\label{F4}
	F_4\leqq &\frac{1}{100} \|\partial_2 b^\h\|_{H^1}^2+C\|(\partial_2 b^\h,\partial_2 b^3)\|_{H^1}^2\| b^\h\|_{H^2}^2+\|\partial_1 b^\h\|_{H^2}^2\|( b^\h, b^3)\|_{H^2}^2.
\end{align}
With respect to $F_7,F_9$, due to $\mathrm{div} u=\mathrm{div} b=0$, we rewrite them as
\begin{align*}
	F_7=\left(u^\h\cdot \nablah u^3-u^3\mathrm{div_h} u^\h|\partial_2 b^3\right)_{L^2}, \quad\mathrm{and}\quad F_9=\left(b^\h\cdot \nablah b^3-b^3\mathrm{div_h} b^\h|\partial_2 b^3\right)_{L^2}.
\end{align*}
In the same as $F_6$, we have
\begin{align}\label{F7}
	F_7\lesssim&\|\partial_2 b^3\|_{L^2}\left( \|u^\h\|_{L^\infty}\| \nablah u^3\|_{L^2}+\|u^3\|_{L^\infty}\|\mathrm{div_h} u^\h\|_{L^2}\right)\nonumber\\
	\lesssim&\|\partial_2 b^3\|_{L^2}\left( \|u^\h\|_{H^2}\| \nablah u^3\|_{L^2}+\|u^3\|_{H^2}\|\nablah u^\h\|_{L^2}\right)\nonumber\\
	\leqq &\frac{1}{100} \|\partial_2 b^3\|_{H^1}^2+C\| \nablah u^3\|_{H^2}^2\|u^\h\|_{H^2}^2+C\|\nablah u^\h\|_{H^2}^2\|u^3\|_{H^2}^2.
\end{align}
For $F_9$, making the best of \eqref{yi} with $f=\mathrm{div_h} b^\h$ and $g=b^3$ to the second term, it yields
\begin{align}\label{F9}
	F_9\lesssim&\|\partial_2 b^3\|_{L^2}\left( \|b^\h\|_{L^\infty}\| \nablah b^3\|_{L^2}+\|b^3\mathrm{div_h} b^\h\|_{L^2}\right)\nonumber\\
	\lesssim&\|\partial_2 b^3\|_{L^2}\left( \|b^\h\|_{H^2}\|( \partial_1b^3,\partial_2b^3)\|_{L^2}+\|\partial_2b^3\|_{H^1}^\frac12\| \partial_1b^\h\|_{H^2}^\frac12\|b^3\|_{H^1}^\frac12\| b^\h\|_{H^2}^\frac12\right)\nonumber\\
	\leqq &\frac{1}{100} \|\partial_2 b^3\|_{H^1}^2+C\left(\|\partial_1b^3\|_{H^2}^2+\|\partial_2b^3\|_{H^1}^2\right)\|b^\h\|_{H^2}^2+\| \partial_1b^\h\|_{H^2}^2\|b^3\|_{H^1}^2.
\end{align}
By  H\"older's inequality, we see that
\begin{align}\label{F3}
	F_3\leqq& \|\Delta_\h u^\h\|_{L^2}\|\partial_2 b^\h\|_{L^2}\leqq \frac12\|\nabla_\h u^\h\|_{H^1}^2+\frac12\|\partial_2 b^\h\|_{L^2},\\\label{F8}
	F_8\leqq &\|\Delta_\h u^3\|_{L^2}\|\partial_2 b^3\|_{L^2}\leqq \frac12\|\nabla_\h u^3\|_{H^1}^2+\frac12\|\partial_2 b^3\|_{L^2}.
\end{align}
Due to $\mathrm{div} b=0$, we have $F_5+F_{10}=0$. Inserting \eqref{F1}, \eqref{F6} and \eqref{F2}-\eqref{F8} into \eqref{p2b_enl2}, we get
\begin{align}\label{Esti.p2bL2}
	\|(\partial_2b^\h,\partial_2 b^3)\|_{L^2}^2\leqq& \frac{\mathrm{d}}{\mathrm{d} t}\int_{\R^3} u^\h\partial_2 b^\h+u^3\partial_2 b^3\mathrm{d} x+2\|\nablah u^3\|_{H^2}^2+\frac12\|\partial_1 b^3\|_{H^2}^2+\frac12\|(\partial_2 b^\h,\partial_2b^3)\|_{L^2}^2\nonumber\\
	&+\frac{3}{100}\|(\partial_2 b^\h,\partial_2b^3)\|_{H^1}^2+C\|(\nablah u^\h,\partial_1 b^\h)\|_{H^2}^2\left(1+\|(u^\h,b^\h)\|_{H^2}^2\right)\nonumber\\
	&+C\left(\|(\nablah u^3,\partial_1b^3)\|_{H^2}^2+\|(\partial_2 b^\h,\partial_2 b^3)\|_{H^1}^2\right)\|(u^\h,b^\h)\|_{H^2}^2\nonumber\\
	&+C\|(\nablah u^\h,\partial_1 b^\h)\|_{H^2}^2\|(u^3,b^3)\|_{H^2}^2.
\end{align}

Then let us treat $\sum_{i=1}^3\|(\partial_i\partial_2b^\h,\partial_i\partial_2b^3)\|_{L^2}$. We first get, by applying $\partial_{i}$ to the equations of  $u$ and then taking the $L^2$ inner product to it with $\partial_{i}\partial_{2}b$, that
\begin{align}\label{p2b_enH1}
	\|(\partial_i\partial_2b^\h,\partial_i\partial_2b^3)\|_{L^2}^2=&\left(\partial_{t}\partial_{i}u^\h+\partial_{i}(u\cdot \nabla u^\h)-\Delta_{\mathrm{h}}\partial_{i}u^\h-\partial_{i}(b\cdot \nabla b^\h)+\nablah\partial_{i}p|\partial_{i}\partial_{2}b^\h\right)_{L^2}\nonumber\\
	&+\left(\partial_{t}\partial_{i}u^3+\partial_{i}(u\cdot \nabla u^3)-\Delta_{\mathrm{h}}\partial_{i}u^3-\partial_{i}(b\cdot \nabla b^3)+\partial_{3}\partial_{i}p|\partial_{i}\partial_{2}b^3\right)_{L^2}\nonumber\\\eqdefa &K_{1}+\cdots+K_{10}.
\end{align}
In order to estimate $K_{1}$ and $K_{6}$, we get, by integration by parts, that
\begin{align*}
	K_{1}=&\frac{\mathrm{d}}{\mathrm{d}t}\int_{\R^3}\partial_{i}u^\h\partial_{i}\partial_{2}b^\h\mathrm{d}x+\int_{\R^3}\partial_{i}^2u^\h\partial_{t}\partial_{2}b^\h\mathrm{d}x,\\
	K_{6}=&\frac{\mathrm{d}}{\mathrm{d}t}\int_{\R^3}\partial_{i}u^3\partial_{i}\partial_{2}b^3\mathrm{d}x+\int_{\R^3}\partial_{i}^2u^3\partial_{t}\partial_{2}b^3\mathrm{d}x.
\end{align*}
and then by using the $b$ equation, the second term of $K_1$ can be treated as
\begin{align*}
	\int_{\R^3}\partial_{i}^2&u^\h\partial_{t}\partial_{2}b^\h\mathrm{d}x=\int_{\R^3} \partial_{i}^2u^\h\partial_{2}\left(b\cdot\nabla u^\h+\partial^{2}_{1}b^\h-u\cdot\nabla b^\h+\partial_{2}u^\h\right)\mathrm{d}x\\&=
	\int_{\R^3} \partial_{i}^2u^\h\left(\partial_{2}b\cdot\nabla u^\h+b\cdot\nabla \partial_{2}u^\h+\partial^{2}_{1}\partial_{2}b^\h-\partial_{2}u\cdot\nabla  b^\h-u \cdot\nabla  \partial_{2}b^\h+\partial_{2}^2u^\h\right)\mathrm{d}x
	\\&\eqdefa K^{1}_{1}+\cdots+ K^{6}_{1}.
\end{align*}
Getting the utmost of \eqref{er} with $f=\partial_{i}^2 u^\h$, $g=b$ (resp. $u$), $h=u^\h$(resp. $b^\h$), it yields
\begin{align*}
	K^{1}_{1}\lesssim\|\nabla_{\mathrm{h}}u^\h\|_{H^{2}}\|(\partial_2b^\mathrm{h},\partial_2b^3)\|_{H^{1}}\|u^\h\|_{H^{2}},\quad  K^{4}_{1}\lesssim\|\nabla_{\mathrm{h}}u^\h\|_{H^{2}}\|(\nabla_{\mathrm{h}}u^\h,\nabla_{\mathrm{h}}u^3)\|_{H^{2}}\|b^\h\|_{H^{2}}.
\end{align*}
And by using H\"{o}lder inequality, \eqref{yi} with $f=b,g=\nabla \partial_{2}u^\h$ and \eqref{L^infty_1D}, we observe
\begin{align*}
	K^{2}_{1}\lesssim&\|\partial_{i}^2u^\h\|_{L^{2}}\|b\cdot \nabla \partial_{2}u^\h\|_{L^{2}}
	\lesssim \|\nablah u^\h\|_{H^{2}}^\frac32\|(\partial_1b^\h,\partial_1b^3)\|_{H^2}^\frac12\|u^\h\|_{H^2}^\frac12\|(b^\h,b^3)\|_{H^2}^\frac12,\\
	K^{5}_{1}\lesssim&\|\partial_{i}^2u^\h\|_{L^{2}}\|u\|_{L^2_{x_1}L^\infty_{x_2}L^\infty_{x_2}}\| \nabla \partial_{2}b^\h\|_{L^\infty_{x_1}L^{2}_{x_2}L^{2}_{x_3}}\\
	\lesssim& \|\nablah u^\h\|_{H^{2}}\|(\nablah u^\h,\nablah u^3)\|_{H^2}^\frac12\|\partial_1b^\h\|_{H^2}^\frac12\|(u^\h,u^3)\|_{H^2}^\frac12\|b^\h\|_{H^2}^\frac12.
\end{align*}
For the remaining terms, using H\"{o}lder inequality again, we get
\begin{align*}
	K^{3}_{1}\leqq \|\partial_{i}^2u^\h\|_{L^{2}}\|\partial_{1}^2\partial_2b^\h\|_{L^{2}}\leqq \|\nablah u^\h\|_{H^{2}}\|\partial_{1}b^\h\|_{H^{2}},\quad 	K^{6}_{1}\leqq\|\nablah u^\h\|_{H^{2}}^2.
\end{align*}
Combining the above estimates, we have
\begin{align}\label{k_1}
	K_{1}\leqq&\frac{\mathrm{d}}{\mathrm{d}t}\int_{\R^3}\partial_{i}u^\h\partial_{i}\partial_{2}b^\h\mathrm{d}x+\|\nablah u^\h\|_{H^{2}}\|\partial_{1}b^\h\|_{H^{2}}+\|\nablah u^\h\|_{H^{2}}^2\nonumber\\
	&+C\|\nabla_{\mathrm{h}}u^\h\|_{H^{2}}\left(\|(\partial_2b^\mathrm{h},\partial_2b^3)\|_{H^{1}}\|u^\h\|_{H^{2}}+\|(\nabla_{\mathrm{h}}u^\h,\nabla_{\mathrm{h}}u^3)\|_{H^{2}}\|b^\h\|_{H^{2}}\right)\nonumber\\
	&+C\|\nablah u^\h\|_{H^{2}}^\frac32\|(\partial_1b^\h,\partial_1b^3)\|_{H^2}^\frac12\|u^\h\|_{H^2}^\frac12\|(b^\h,b^3)\|_{H^2}^\frac12\nonumber\\
	&+C\|\nablah u^\h\|_{H^{2}}\|(\nablah u^\h,\nablah u^3)\|_{H^2}^\frac12\|\partial_1b^\h\|_{H^2}^\frac12\|(u^\h,u^3)\|_{H^2}^\frac12\|b^\h\|_{H^2}^\frac12.
\end{align}
As a consequence, applying Young's inequality, $K_{1}$ can be dominated as
\begin{align}\label{K1}
	K_{1}\leqq&\frac{\mathrm{d}}{\mathrm{d}t}\int_{\R^3}\partial_{i}u^\h\partial_{i}\partial_{2}b^\h\mathrm{d}x+(\frac32+\frac{1}{100})\|\nablah u^\h\|_{H^{2}}^2+\frac12\|\partial_{1}b^\h\|_{H^{2}}^2\nonumber\\
	&+C\left(\|(\partial_1b^\h,\partial_1b^3)\|_{H^2}^2+\|(\partial_2b^\mathrm{h},\partial_2b^3)\|_{H^{1}}^2\right)\|u^\h\|_{H^{2}}^2+C\|\partial_1b^\h\|_{H^2}^2\|u^3\|_{H^2}^2\nonumber\\
	&+C\|(\nabla_{\mathrm{h}}u^\h,\nabla_{\mathrm{h}}u^3)\|_{H^{2}}^2\|b^\h\|_{H^{2}}^2+C\|\nablah u^\h\|_{H^{2}}^2\|b^3\|_{H^2}^2.
\end{align}
In view of $K_1$, for the second term of $K_6$, we find
\begin{align*}
	\int_{\R^3}\partial_{i}^2&u^3\partial_{t}\partial_{2}b^3\mathrm{d}x=\int_{\R^3} \partial_{i}^2u^3\partial_{2}\left(b\cdot\nabla u^3+\partial^{2}_{1}b^3-u\cdot\nabla b^3+\partial_{2}u^3\right)\mathrm{d}x\\&=
	\int_{\R^3} \partial_{i}^2u^3\Big(\partial_{2}b^{\mathrm{h}}\cdot\nabla_{\mathrm{h}} u^3+\partial_{2}b^3\partial_{3}u^3+b^{\mathrm{h}}\cdot\nabla_{\mathrm{h}} \partial_{2}u^3+b^3\partial_{3}\partial_{2}u^3+\partial^{2}_{1}\partial_{2}b^3\\&\quad-\partial_{2}u^\mathrm{h}\cdot\nabla_{\mathrm{h}} b^3-\partial_{2}u^3\partial_{3}b^3-u^\mathrm{h}\cdot\nabla_{\mathrm{h}} \partial_{2}b^3-u^3\partial_{3}\partial_{2}b^3+\partial_{2}^2u^3\Big)\mathrm{d}x\\&=
	\int_{\R^3} \partial_{i}^2u^3\Big(\partial_{2}b^{\mathrm{h}}\cdot\nabla_{\mathrm{h}} u^3-\partial_{2}b^3\mathrm{div_h}u^\h+b^{\mathrm{h}}\cdot\nabla_{\mathrm{h}} \partial_{2}u^3-b^3\partial_{2}\mathrm{div_h}u^\h+\partial^{2}_{1}\partial_{2}b^3\\&\quad-\partial_{2}u^\mathrm{h}\cdot\nabla_{\mathrm{h}} b^3+\partial_{2}u^3\mathrm{div_h}b^\h-u^\mathrm{h}\cdot\nabla_{\mathrm{h}} \partial_{2}b^3+u^3\partial_{2}\mathrm{div_h}b^\h+\partial_{2}^2u^3\Big)\mathrm{d}x
	\\& \eqdefa
	K^{1}_{6}+\cdots+ K^{10}_{6}.
\end{align*}
Since all the terms above possess sufficient horizontal derivatives, they can be simply estimated as
\begin{align*}
	K^{1}_{6}\lesssim& \|\partial_{i}^2u^3\|_{L^2}\|\partial_{2}b^{\mathrm{h}}\|_{L^2}\|\nabla_{\mathrm{h}} u^3\|_{L^\infty},
	&&K^{2}_{6}\lesssim \|\partial_{i}^2u^3\|_{L^2}\|\partial_{2}b^3\|_{L^2}\|\mathrm{div_h}u^\h\|_{L^\infty},\\
	K^{3}_{6}\lesssim& \|\partial_{i}^2u^3\|_{L^2}\|b^{\mathrm{h}}\|_{L^\infty}\|\nabla_{\mathrm{h}}\partial_2 u^3\|_{L^2},
	&&K^{4}_{6}\lesssim \|\partial_{i}^2u^3\|_{L^2}\|b^3\|_{L^\infty}\|\partial_{2}\mathrm{div_h}u^\h \|_{L^2},\\
	K^{6}_{6}\lesssim& \|\partial_{i}^2u^3\|_{L^2}\|\partial_{2}u^{\mathrm{h}}\|_{L^\infty}\|\nabla_{\mathrm{h}} b^3\|_{L^2},
	&&K^{7}_{6}\lesssim \|\partial_{i}^2u^3\|_{L^2}\|\partial_{2}u^3\|_{L^\infty}\|\mathrm{div_h}b^\h\|_{L^2},\\
	K^{8}_{6}\lesssim& \|\partial_{i}^2u^3\|_{L^2}\|u^{\mathrm{h}}\|_{L^\infty}\|\nabla_{\mathrm{h}} \partial_{2}b^3\|_{L^2},
\end{align*}
and
\begin{align*}
	K^{5}_{6}\leqq\|\partial_{i}^2u^3\|_{L^2}\|\partial_1^2\partial_{2}b^3\|_{L^2}\leqq\|\nablah u^3\|_{H^2}\|\partial_1b^3\|_{H^2} ,\quad
	K^{10}_{6}\leqq \|\partial_{i}^2u^3\|_{L^2}\|\partial_2^2u^3\|_{L^2}\leqq \|\nablah u^3\|_{H^2}^2.
\end{align*}
Using Sobolev embedding theorem or \eqref{L^infty_1D}, we immediately obtain
\begin{align*}
	K^{1}_{6}+K^{3}_{6}+K^{7}_{6}\lesssim \|\nablah u^3\|_{H^2}^2\|b^\h\|_{H^2},\quad 	K^{2}_{6}+K^{4}_{6}+K^{6}_{6}\lesssim \|\nablah u^3\|_{H^2}\|\nablah u^\h\|_{H^2}\|b^3\|_{H^2},
\end{align*}
and
\begin{align*}
	K^{8}_{6}\lesssim \|\nablah u^3\|_{H^2}\| \partial_{2}b^3\|_{H^1}\|u^\h\|_{H^2}.
\end{align*}
For $K^{9}_{6}$, by \eqref{L^infty_1D}, we achieve
\begin{align*}
	K^{9}_{6}\lesssim\|\partial_{i}^2u^3\|_{L^2}\|u^3\|_{L^2_{x_1}L^\infty_{x_2}L^\infty_{x_3}}\| \partial_{2}\mathrm{div_h}b^\h\|_{L^\infty_{x_1}L^{2}_{x_2}L^{2}_{x_3}}
	\lesssim\|\nablah u^3\|_{H^{2}}^\frac32\|\partial_1b^\h\|_{H^2}^\frac12\|u^3\|_{H^2}^\frac12\|b^\h\|_{H^2}^\frac12.
\end{align*}
To a conclusion, applying Young inequality, $K_6$ can be dominated as
\begin{align}\label{K6}
	K_{6}\leqq&\frac{\mathrm{d}}{\mathrm{d}t}\int_{\R^3}\partial_{i}u^3\partial_{i}\partial_{2}b^3\mathrm{d}x+\|\nablah u^3\|_{H^2}\|\partial_1b^3\|_{H^2} + \|\nablah u^3\|_{H^2}^2\nonumber\\
	&+ C\|\nablah u^3\|_{H^2}\left(\|\nablah u^3\|_{H^2}\|b^\h\|_{H^2}+\|\nablah u^\h\|_{H^2}\|b^3\|_{H^2}+\| \partial_{2}b^3\|_{H^1}\|u^\h\|_{H^2}\right)\nonumber\\
	&+C\|\nablah u^3\|_{H^{2}}^\frac32\|\partial_1b^\h\|_{H^2}^\frac12\|u^3\|_{H^2}^\frac12\|b^\h\|_{H^2}^\frac12\nonumber\\
	\leqq&\frac{\mathrm{d}}{\mathrm{d}t}\int_{\R^3}\partial_{i}u^3\partial_{i}\partial_{2}b^3\mathrm{d}x+(\frac32+\frac{1}{10})\|\nablah u^3\|_{H^2}^2+\frac12\|\partial_1b^3\|_{H^2}+C\| \partial_{2}b^3\|_{H^1}^2\|u^\h\|_{H^2}^2\nonumber\\
	&+ C\|\nablah u^3\|_{H^2}^2\|b^\h\|_{H^2}^2+C\|\nablah u^\h\|_{H^2}^2\|b^3\|_{H^2}^2+C\|\partial_1b^\h\|_{H^2}^2\|u^3\|_{H^2}^2.
\end{align}
For the term $K_{2}$ and $K_4$, thanks to $\mathrm{div}b=0$ and \eqref{L^infty_1D}, it leads
\begin{align}\label{k_2}
	K_2=&\left(\partial_{i}u\cdot \nabla u^\h+u\cdot \nabla\partial_{i} u^\h|\partial_{i}\partial_{2}b^\h\right)_{L^2}\nonumber\\
	\lesssim&\|\partial_{i}\partial_{2}b^\h\|_{L^2}\left(\|\partial_{i}u\|_{L^2_{x_1}L^\infty_{x_2}L^2_{x_3}}\| \nabla u^\h\|_{L^\infty_{x_1}L^{2}_{x_2}L^{\infty}_{x_3}}+\|u\|_{L^2_{x_1}L^\infty_{x_2}L^\infty_{x_3}}\| \nabla\partial_{i} u^\h\|_{L^\infty_{x_1}L^{2}_{x_2}L^{2}_{x_3}}\right)\nonumber\\
	\lesssim&\|\partial_{2}b^\h\|_{H^1}\|(\nablah u^\h,\nablah u^3)\|_{H^2}^\frac12\|\nablah u^\h\|_{H^2}^\frac12\|( u^\h, u^3)\|_{H^2}^\frac12\| u^\h \|_{H^2}^\frac12,\\\label{k_4}
	K_4=&\left(\partial_{i}b\cdot \nabla u^\h+b\cdot \nabla\partial_{i} u^\h|\partial_{i}\partial_{2}b^\h\right)_{L^2}\nonumber\\
	\lesssim&\|\partial_{i}\partial_{2}b^\h\|_{L^2}\left(\|\partial_{i}b\|_{L^2_{x_1}L^\infty_{x_2}L^2_{x_3}}\| \nabla b^\h\|_{L^\infty_{x_1}L^{2}_{x_2}L^{\infty}_{x_3}}+\|b\|_{L^2_{x_1}L^\infty_{x_2}L^\infty_{x_3}}\| \nabla\partial_{i} b^\h\|_{L^\infty_{x_1}L^{2}_{x_2}L^{2}_{x_3}}\right)\nonumber\\
	\lesssim&\|\partial_{2}b^\h\|_{H^1}\|(\partial_2 b^\h,\partial_2 b^3)\|_{H^1}^\frac12\|\partial_1 b^\h\|_{H^2}^\frac12\|( b^\h, b^3)\|_{H^2}^\frac12\| b^\h \|_{H^2}^\frac12.
\end{align}
Therefore, we get
\begin{align}\label{K2}
	K_2	\leqq&\frac{1}{100}\|\partial_{2}b^\h\|_{H^1}^2+\|\nablah u^3\|_{H^2}^2\| u^\h \|_{H^2}^2+\|\nablah u^\h\|_{H^2}^2\|( u^\h, u^3)\|_{H^2}^2,\\\label{K4}
	K_4\leqq&\frac{1}{100}\|\partial_{2}b^\h\|_{H^1}^2+\|(\partial_2 b^\h,\partial_2 b^3)\|_{H^1}^2\| b^\h \|_{H^2}^2+\|\partial_1 b^\h\|_{H^2}^2\|( b^\h, b^3)\|_{H^2}^2.
\end{align}
By utilizing $\mathrm{div}u=0$, H\"older's inequality, the term $K_{7}$ could be bounded
\begin{align}\label{K7}
	K_{7}=&\sum_{i=1}^{2}
	\left(\partial_{i}u^\mathrm{h}\cdot\nabla_{\mathrm{h}}u^3-\partial_{i}u^3\mathrm{div_h}u^\mathrm{h}+u^\mathrm{h}\cdot\nabla_{\mathrm{h}}\partial_{i}u^3-u^3\partial_{i}\mathrm{div_h}u^\mathrm{h}|\partial_{i}\partial_{2}b^3\right)_{L^2}\nonumber\\
	&+\left(\partial_{3}u^\mathrm{h}\cdot\nabla_{\mathrm{h}}u^3+(\mathrm{div_h}u^\h)^2-u^\mathrm{h}\cdot\nabla_{\mathrm{h}}\mathrm{div_h}u^\h+u^3\partial_{3}\mathrm{div_h}u^\h|\partial_{3}\partial_{2}b^3\right)_{L^2}\nonumber\\
	\lesssim &\|\nabla\partial_{2}b^3\|_{L^2}\left(\|\nabla_\h u^\mathrm{h}\|_{L^\infty}\|\nabla_{\mathrm{h}}u^3\|_{L^2}+\|u^\mathrm{h}\|_{L^\infty}\|\nabla_{\mathrm{h}}^2u^3\|_{L^2}+\|u^3\|_{L^\infty}\|\nabla_\h\nabla u^\h\|_{L^2}\right)\nonumber\\
	&+\|\partial_3\partial_{2}b^3\|_{L^2}\left(\|\nabla_\h u^\mathrm{h}\|_{L^\infty}\|\nabla_{\mathrm{h}}u^\h\|_{L^2}+\|u^\mathrm{h}\|_{L^\infty}\|\nabla_{\mathrm{h}}^2u^\h\|_{L^2}\right)\nonumber\\
	\lesssim&\|\partial_{2}b^3\|_{H^1}\left(\|\nabla_\h u^\mathrm{h}\|_{H^2}\|(u^\h,u^3)\|_{H^2}+\|\nabla_{\mathrm{h}}u^3\|_{H^2}\|u^\mathrm{h}\|_{H^2}\right)\nonumber\\
	\leqq& \frac{1}{100}\|\partial_{2}b^3\|_{H^1}^2+C\|\nabla_\h u^\mathrm{h}\|_{H^2}^2\|(u^\h,u^3)\|_{H^2}^2+C\|\nabla_{\mathrm{h}}u^3\|_{H^2}^2\|u^\mathrm{h}\|_{H^2}^2.
\end{align}
Along the same lines, $K_9$ can be handled as
\begin{align}\label{K9}
	K_{9}=&\sum_{i=1}^{2}
	\left(\partial_{i}b^\mathrm{h}\cdot\nabla_{\mathrm{h}}b^3-\partial_{i}b^3\mathrm{div_h}b^\mathrm{h}+b^\mathrm{h}\cdot\nabla_{\mathrm{h}}\partial_{i}b^3-b^3\partial_{i}\mathrm{div_h}b^\mathrm{h}|\partial_{i}\partial_{2}b^3\right)_{L^2}\nonumber\\
	&-\left(\partial_{3}b^\mathrm{h}\cdot\nabla_{\mathrm{h}}b^3+(\mathrm{div_h}b^\h)^2-b^\mathrm{h}\cdot\nabla_{\mathrm{h}}\mathrm{div_h}b^\h+b^3\partial_{3}\mathrm{div_h}b^\h|\partial_3\partial_{2}b^3\right)_{L^2}\nonumber\\
	\lesssim &\|\nabla\partial_{2}b^3\|_{L^2}\left(\|\nabla_\h b^\mathrm{h}\|_{L^\infty_{x_1}L^2_{x_2}L^\infty_{x_3}}\|\nabla_{\mathrm{h}}b^3\|_{L^2_{x_1}L^\infty_{x_2}L^2_{x_3}}+\|b^\mathrm{h}\|_{L^\infty}\|\nabla_{\mathrm{h}}^2b^3\|_{L^2}\right)\nonumber\\
	&+\|\nabla\partial_{2}b^3\|_{L^2}\|b^3\|_{L^2_{x_1}L^\infty_{x_2}L^\infty_{x_3}}\|\nabla_\h\nabla b^\h\|_{L^\infty_{x_1}L^2_{x_2}L^2_{x_3}}\nonumber\\
	&+\|\partial_3\partial_{2}b^3\|_{L^2}\left(\|\nabla_\h b^\mathrm{h}\|_{L^\infty_{x_1}L^2_{x_2}L^\infty_{x_3}}\|\nabla_{\mathrm{h}}b^\h\|_{L^2_{x_1}L^\infty_{x_2}L^2_{x_3}}+\|b^\mathrm{h}\|_{L^\infty}\|\nabla_{\mathrm{h}}^2b^\h\|_{L^2}\right)\nonumber\\
	\lesssim&\|\partial_{2}b^3\|_{H^1}\left(\|\partial_1 b^\mathrm{h}\|_{H^2}^\frac12\|\partial_2 b^3\|_{H^1}^\frac12\|b^\h\|_{H^2}^\frac12\|b^3\|_{H^2}^\frac12+\|(\partial_1b^3,\partial_2b^3)\|_{H^1}\|b^\mathrm{h}\|_{H^2}\right)\nonumber\\
	&+\|\partial_{2}b^3\|_{H^1}\left(\|\partial_1 b^\mathrm{h}\|_{H^2}^\frac12\|\partial_2 b^\h\|_{H^1}^\frac12\|b^\h\|_{H^2}+\|(\partial_1b^\h,\partial_2b^\h)\|_{H^1}\|b^\mathrm{h}\|_{H^2}\right)\nonumber\\
	\leqq& \frac{1}{100}\|\partial_{2}b^3\|_{H^1}^2+C\left(\|(\partial_1 b^\mathrm{h},\partial_1 b^3)\|_{H^2}^2+\|(\partial_2 b^\h,\partial_2 b^3)\|_{H^1}^2\right)\|b^\h\|_{H^2}^2\nonumber\\
	&+C\|\partial_1 b^\mathrm{h}\|_{H^2}^2\|b^3\|_{H^2}^2.
\end{align}
Ultimately, by H\"older's inequality, it yields
\begin{align}\label{K3}
	K_3\leqq &\|\Delta_{\mathrm{h}}\partial_{i}u^\h\|_{L^2}\|\partial_{i}\partial_{2}b^\h\|_{L^2}\leqq \frac12\|\nablah u^\h\|_{H^2}^2+\frac12\|\partial_{2}\partial_ib^\h\|_{L^2}^2,\\\label{K8}
	K_8\leqq &\|\Delta_{\mathrm{h}}\partial_{i}u^3\|_{L^2}\|\partial_{i}\partial_{2}b^3\|_{L^2}\leqq \frac12\|\nablah u^3\|_{H^2}^2+\frac12\|\partial_{2}\partial_ib^3\|_{L^2}^2,
\end{align}
and by integration by parts and divergence free condition of $b$, it implies $K_5+K_{10} = 0$. As a consequence, inserting \eqref{K1}, \eqref{K6} and \eqref{K2}-\eqref{K8} into \eqref{p2b_enH1}, we deduce that
\begin{align}\label{Esti.p2bH1}
	\|(\partial_i\partial_2b^\h,\partial_i\partial_2b^3)\|_{L^2}^2\leqq &\frac{\mathrm{d}}{\mathrm{d}t}\int_{\R^3}\partial_{i}u^\h\partial_{i}\partial_{2}b^\h+\partial_{i}u^3\partial_{i}\partial_{2}b^3\mathrm{d}x+\frac{201}{100}\|\nablah u^3\|_{H^2}^2+\frac12\|\partial_1b^3\|_{H^2}^2\nonumber\\
	&+\frac{1}{50}\|(\partial_{2}b^\h,\partial_{2}b^3)\|_{H^1}^2+\frac12\|(\partial_2\partial_{i}b^\h,\partial_2\partial_{i}b^3)\|_{L^2}^2\nonumber\\
	&+C\left(\|(\nablah u^3,\partial_1b^3)\|_{H^2}^2+\|(\partial_2b^\mathrm{h},\partial_2b^3)\|_{H^{1}}^2\right)\|(u^\h,b^\h)\|_{H^{2}}^2\nonumber\\
	&+C\|(\nablah u^\h,\partial_1b^\h)\|_{H^{2}}^2\|(u^3,b^3)\|_{H^2}^2\nonumber\\
	&+C\|(\nablah u^\h,\partial_{1}b^\h)\|_{H^{2}}^2\left(1+\|( u^\h, b^\h)\|_{H^2}^2\right).
\end{align}
Combining \eqref{Esti.p2bL2} and \eqref{Esti.p2bH1}, we have \eqref{p2bH1}. This completes the proof of Proposition \ref{v}.

\section{The proof of Proposition \ref{h}}\label{4}
In this section, we shall present the proof of Proposition \ref{h}. We divide it into two subsections. The first
one is $H^2$ energy estimates of $(u^\h, b^\h)$ and the second one is $H^1$ energy estimates of $\partial_2b^\h$.

\subsection{The proof of \eqref{uhbhH2}}
In the same manner as before, we only need to  estimate  $\|(u^\mathrm{h},b^\mathrm{h})\|_{L^2}^2$ and $\sum_{i=1}^{3}\|\partial_{i}^2(u^\mathrm{h},b^\mathrm{h})\|_{L^2}^{2}$, and we split it into  two parts. Initially, by the standard $L^2$ energy method, the system \eqref{eq.MHD} implies
\begin{align}\label{uhbhl2}
	\frac12\frac{\mathrm{d}}{\dt}  \|(u^\mathrm{h},b^\mathrm{h})\|_{L^2}^2 +\|(\nabla_\mathrm{h}u^\mathrm{h},\partial_3 b^\mathrm{h})\|_{ L^2}^2=-(\nablah p|u^\h)_{L^2}.
\end{align}
Due to \eqref{p_same}, combining \eqref{A1}, \eqref{A2(h)}, \eqref{A3}, \eqref{A4}, \eqref{A5(h)}, \eqref{A6(h)} and \eqref{uhbhl2}, it holds that
\begin{align}\label{Esti.h.L2}
	\frac12&\frac{\mathrm{d}}{\dt}  \|(u^\h,b^\h)\|_{L^2}^2 +\|(\nabla_\h u^\h,\partial_1b^\h)\|_{ L^2}^2\nonumber\\
	&\leqq \frac{1}{20}\|(\nablah u^\h,\partial_1b^\h)\|_{H^2}^2 +\frac{1}{100}\|\partial_2b^\h\|_{H^1}^2+C\|\nablah u^\h\|_{H^{1}}^2\|u^\h\|_{L^2}^2\nonumber\\&+C\left(\|\partial_1b^\h\|_{H^2}^2+\|\partial_2b^\h\|_{H^{1}}^2+\|\partial_2b^3\|_{H^1}^2\|b^3\|_{L^{2}}^2\right)\|b^\h\|_{H^{2}}^2\nonumber\\
	&+C\|(\nablah u^\h,\nablah u^3)\|_{H^1}^2\|u^3\|_{H^{1}}^2\| u^\h\|_{H^1}^2.
\end{align}

Then let us handle  $\sum_{i=1}^{3}\|\partial_{i}^2(u^\h,b^\h)\|_{L^2}^{2}$. We get, by applying $\partial_{i}^2$ to the equations of  $u^\mathrm{h}$ and $b^\mathrm{h}$ in systems \eqref{eq.MHD}, and then taking the $L^2$ inner product to the new equations with $\partial_{i}^2u^\mathrm{h}$ and $\partial_{i}^2b^\mathrm{h}$, respectively, that
\begin{align}\label{en_H2_h}
	\frac12\frac{\mathrm{d}}{\mathrm{d}t}\sum_{i=1}^{3}\|\partial_{i}^2(u^\h,b^\h)\|_{L^2}^2+\sum_{i=1}^{3}\|\partial_{i}^2(\nabla_{\mathrm{h}}u^\h,\partial_{1}b^\h)\|_{L^2}^2&=J_{1}+\cdots+J_{8},
\end{align}
where
\begin{align*}
	J_{1}
	=&-\left(\partial_{i}^2u^j\partial_ju^\h+2\partial_{i}u^j\partial_j \partial_{i}u^\h|\partial_{i}^2u^\h\right)_{L^2},&&J_{2}=\left(\partial_{i}^2b^j\partial_jb^\h+2\partial_{i}b^j\partial_j \partial_{i}b^\h|\partial_{i}^2u^\h\right)_{L^2},\\
	J_{3}
	=&-\left(\partial_{i}^2u^j\partial_jb^\h+2\partial_{i}u^j\partial_j \partial_{i}b^\h|\partial_{i}^2b^\h\right)_{L^2},&&J_{4}=\left(\partial_{i}^2b^j\partial_ju^\h+2\partial_{i}b^j\partial_j \partial_{i}u^\h|\partial_{i}^2b^\h\right)_{L^2},\\
	J_{5}
	=&\left(-u\cdot\nabla \partial_{i}^2u^\h+b\cdot\nabla \partial_{i}^2b^\h|\partial_{i}^2u^\h\right)_{L^2},&&J_{6}=\left(-u\cdot\nabla \partial_{i}^2b^\h+b\cdot\nabla \partial_{i}^2u^\h|\partial_{i}^2b^\h\right)_{L^2},\\
	J_{7}=&\left( \partial_{i}^2\partial_{2}b^\h|\partial_{i}^2u^\h \right)_{L^2}+\left( \partial_{i}^2\partial_{2}u^\h|\partial_{i}^2b^\h \right)_{L^2} ,&&J_{8}=-\left(\partial_{i}^2\partial_{3}p|\partial_{i}^2u^\h\right)_{L^2}\quad \mathrm{with}\quad i,j=1,2,3.
\end{align*}
In view of \eqref{I1rewrite}, for $J_1$ and $J_2$, due to $\mathrm{div}u=\mathrm{div}b=0$, we reform them as follow
\begin{align*}
	J_{1}\leqq& C\int_{\R^3}|\nablah\nabla u\nabla u^\h \nablah^2 u^\h|+|\nablah\partial_3u^\h\partial_3 u^\h \partial_3^2 u^\h|+|\partial_3^2u^\h\nablah u^\h \partial_3^2 u^\h|\dx\nonumber\\
	&+C\int_{\R^3}|\nablah\partial_3 u^\h\nablah u^3 \nablah^2 u ^\h|\dx\eqdefa J_{11}+\cdots+J_{14},\\
	J_{2}\leqq& C\int_{\R^3}|\nablah\nabla b^\h\nablah b\nablah^2 u^\h|+|\nablah\partial_3b^\h\partial_3 b^\h \partial_3^2 u^\h|+|\partial_3^2b^\h\nablah b^\h \partial_3^2 u^\h|\dx\nonumber\\
	&+C\int_{\R^3}|\nablah^2 b^3\partial_3 b^\h \nablah^2 u ^\h|\dx\eqdefa J_{21}+\cdots+J_{24}.
\end{align*}
For $J_{11}$ and $J_{21}$,
getting the utmost of \eqref{er} with $f=u$ (resp. $b^\h$), $g= u^\mathrm{h}$ (resp. $b$), $h=\nabla_\mathrm{h} u^\mathrm{h}$ (resp. $\nablah u^\h$), it holds
\begin{align*}
	J_{11}\lesssim& \|\partial_1u\|_{H^2}^\frac12\|\nabla_\h u\|_{H^1}^\frac12\|\nabla u^\h\|_{H^1}\|\nabla^2_\mathrm{h} u^\mathrm{h}\|_{H^1}^\frac12\|\nabla_\mathrm{h} u^\mathrm{h}\|_{H^1}^\frac12\\
	\lesssim&\|\nabla_\mathrm{h} u^\mathrm{h}\|_{H^2}\|\nabla_\mathrm{h} u^3\|_{H^2}\| u^\mathrm{h}\|_{H^2}+\|\nabla_\mathrm{h} u^\mathrm{h}\|_{H^2}^2\| u^\mathrm{h}\|_{H^2},\\
	J_{21}\lesssim &\|\partial_1b^\h\|_{H^2}^\frac12\|\nabla_\h b^\h\|_{H^1}^\frac12\|\nablah b\|_{H^1}\|\nabla^2_\mathrm{h} u^\mathrm{h}\|_{H^1}^\frac12\|\nabla_\mathrm{h} u^\mathrm{h}\|_{H^1}^\frac12\\
	\lesssim&\|\nabla_\mathrm{h} u^\mathrm{h}\|_{H^2}^\frac12\|\partial_1b^\h\|_{H^2}^\frac12\left(\|(\partial_1 b^\h,\partial_1b^3)\|_{H^2}+\|(\partial_2 b^\h,\partial_2b^3)\|_{H^1}\right)\|u^\mathrm{h}\|_{H^2}^\frac12\| b^\h\|_{H^2}^\frac12.
\end{align*}
For the remaining terms, using H\"older's inequality and \eqref{L^infty_1D}, we get
\begin{align*}
	J_{12}\lesssim &\|\nablah\partial_3u^\h\|_{L^2_{x_1}L^2_{x_2}L^\infty_{x_3}}\|\partial_3 u^\h  \|_{L^\infty_{x_1}L^2_{x_2}L^2_{x_3}}\|\partial_3^2 u^\h \|_{L^2_{x_1}L^\infty_{x_2}L^2_{x_3}}
	\lesssim\|\nabla_\mathrm{h} u^\mathrm{h}\|_{H^2}^2\| u^\mathrm{h}\|_{H^2} ,\\
	J_{13}\lesssim &\|\partial_3^2u^\h\|_{L^\infty_{x_1}L^2_{x_2}L^2_{x_3}}\|\nablah u^\h\|_{L^2_{x_1}L^2_{x_2}L^\infty_{x_3}}\|\partial_3^2 u^\h \|_{L^2_{x_1}L^\infty_{x_2}L^2_{x_3}}
	\lesssim\|\nabla_\mathrm{h} u^\mathrm{h}\|_{H^2}^2\| u^\mathrm{h}\|_{H^2},\\
	J_{14}\lesssim&\|\nablah\partial_3 u^\h \|_{L^2}\|\nablah u^3\|_{L^\infty}\|\nablah^2 u ^\h\|_{L^2}\lesssim\|\nabla_\mathrm{h} u^\mathrm{h}\|_{H^2}\|\nabla_\mathrm{h} u^3\|_{H^2}\| u^\mathrm{h}\|_{H^2},
\end{align*}
and
\begin{align*}
	J_{22}\lesssim &\|\nablah\partial_3b^\h\|_{L^2}\|\partial_3 b^\h  \|_{L^\infty_{x_1}L^2_{x_2}L^\infty_{x_3}}\|\partial_3^2 u^\h \|_{L^2_{x_1}L^\infty_{x_2}L^2_{x_3}}\\
	\lesssim&\|\nabla_\mathrm{h} u^\mathrm{h}\|_{H^2}^\frac12\|\partial_1b^\h\|_{H^2}^\frac12\left(\|\partial_1 b^\h\|_{H^2}+\|\partial_2 b^\h\|_{H^1}\right)\|u^\mathrm{h}\|_{H^2}^\frac12\| b^\h\|_{H^2}^\frac12 ,\\
	J_{23}\lesssim &\|\partial_3^2b^\h\|_{L^\infty_{x_1}L^2_{x_2}L^2_{x_3}}\|\nablah b^\h\|_{L^2_{x_1}L^2_{x_2}L^\infty_{x_3}}\|\partial_3^2 u^\h \|_{L^2_{x_1}L^\infty_{x_2}L^2_{x_3}}	\\
	\lesssim&\|\nabla_\mathrm{h} u^\mathrm{h}\|_{H^2}^\frac12\|\partial_1b^\h\|_{H^2}^\frac12\left(\|\partial_1 b^\h\|_{H^2}+\|\partial_2 b^\h\|_{H^1}\right)\|u^\mathrm{h}\|_{H^2}^\frac12\| b^\h\|_{H^2}^\frac12 ,\\
	J_{24}\lesssim&\|\nablah^2 b^3 \|_{L^2}\|\partial_3 b^\h\|_{L^\infty_{x_1}L^2_{x_2}L^\infty_{x_3}}\|\nablah^2 u ^\h\|_{L^2_{x_1}L^\infty_{x_2}L^2_{x_3}}\\
	\lesssim&\|\nabla_\mathrm{h} u^\mathrm{h}\|_{H^2}^\frac12\|\partial_1b^\h\|_{H^2}^\frac12\left(\|\partial_1 b^3\|_{H^2}+\|\partial_2 b^3\|_{H^1}\right)\|u^\mathrm{h}\|_{H^2}^\frac12\| b^\h\|_{H^2}^\frac12 .
\end{align*}
Then we obtain
\begin{align}\label{J_1}
	J_1\lesssim& \|\nabla_\mathrm{h} u^\mathrm{h}\|_{H^2}\|\nabla_\mathrm{h} u^3\|_{H^2}\| u^\mathrm{h}\|_{H^2}+\|\nabla_\mathrm{h} u^\mathrm{h}\|_{H^2}^2\| u^\mathrm{h}\|_{H^2}\nonumber\\
	\leqq &\frac{1}{100}\|\nabla_\mathrm{h} u^\mathrm{h}\|_{H^2}^2+\|(\nabla_\mathrm{h} u^\mathrm{h},\nabla_\mathrm{h} u^{3})\|_{H^2}^2\| u^\mathrm{h}\|_{H^2}^2,
\end{align}
and
\begin{align}\label{J_2}
	J_2\lesssim&\|\nabla_\mathrm{h} u^\mathrm{h}\|_{H^2}^\frac12\|\partial_1b^\h\|_{H^2}^\frac12\left(\|(\partial_1 b^\h,\partial_1b^3)\|_{H^2}+\|(\partial_2 b^\h,\partial_2b^3)\|_{H^1}\right)\|u^\mathrm{h}\|_{H^2}^\frac12\| b^\h\|_{H^2}^\frac12\nonumber\\
	\leqq &\frac{1}{100}\|(\nabla_\mathrm{h} u^\mathrm{h},\partial_1b^\h)\|_{H^2}^2+\left(\|(\partial_1 b^\h,\partial_1b^3)\|_{H^2}^2+\|(\partial_2 b^\h,\partial_2b^3)\|_{H^1}^2\right)\| (u^\mathrm{h},b^\h)\|_{H^2}^2.
\end{align}
For $J_{3}$ and $J_{4}$, to facilitate the analysis, we divide the discussion for indices $i$ into two cases: $i=1,2$ and $i=3$. Namely, we denote
\begin{align}\label{J3,4deco}
	J_{3}+J_{4} \eqdefa J_{31}+J_{32} ,
\end{align}
where
\begin{align*}
	J_{31}=&\sum_{i=1}^2\left(-\partial_{i}^2u\cdot \nabla b^\h-2\partial_{i}u\cdot \nabla  \partial_{i}b^\h+\partial_{i}^2b\cdot \nabla u^\h+2\partial_{i}b\cdot \nabla  \partial_{i}u^\h|\partial_{i}^2b^\h\right)_{L^2},\\
	J_{32}=&\sum_{j=1}^2\left(-\partial_{3}^2u^j\partial_jb^\h-2\partial_{3}u^j\partial_j \partial_{3}b^\h+\partial_{3}^2b^j\partial_ju^\h+2\partial_{3}b^j\partial_j \partial_{3}u^\h|\partial_{3}^2b^\h\right)_{L^2}\\
	&+\left(-\partial_{3}^2u^3\partial_3b^\h-2\partial_{3}u^3\partial_3^2b^\h+\partial_{3}^2b^3\partial_3u^\h+2\partial_{3}b^3\partial_3^2u^\h|\partial_{3}^2b^\h\right)_{L^2}.
\end{align*}
For $J_{31}$, it can be bounded easily
\begin{align*}
	J_{31}\lesssim& \left(\|\nabla_\h^2 u\|_{L^2_{x_1}L^2_{x_2}L^\infty_{x_3}}\| \nabla b^\h \|_{L^\infty_{x_1}L^\infty_{x_2}L^2_{x_3}}+\|\nabla_\h u\|_{L^\infty}\|\nabla_\h\nabla b^\h\|_{L^2} \right)\|\nabla_\h^2 b^\h \| _{L^2}  \nonumber\\
	&+\left(\|\nabla_\h^2 b\|_{L^\infty_{x_1}L^2_{x_2}L^2_{x_3}}\| \nabla u^\h \|_{L^2_{x_1}L^\infty_{x_2}L^\infty_{x_3}}+\|\nabla_\h b\|_{L^\infty_{x_1}L^\infty_{x_2}L^2_{x_3}}\|\nabla_\h\nabla u^\h\|_{L^2_{x_1}L^2_{x_2}L^\infty_{x_3}} \right)\|\nabla_\h^2 b^\h \| _{L^2}\nonumber\\
	\lesssim&\|(\nabla_\h u^\h,\nabla_\h u^3)\|_{H^2}\left(\|\partial_1 b^\h\|_{H^2}+\|\partial_2 b^\h\|_{H^1}\right)\|b^\h\|_{H^2}\nonumber\\
	&+\|\nabla_\h u^\h\|_{H^2}\left(\|(\partial_1 b^\h,\partial_1b^3)\|_{H^2}+\|(\partial_2 b^\h,\partial_2b^3)\|_{H^1}\right)\|b^\h\|_{H^2}.
\end{align*}
Using Young's inequality, we see that
\begin{align}\label{J31}
	J_{31}\leqq& \frac{1}{100}\left(\|(\nabla_\h u^\h,\partial_1 b^\h)\|_{H^2}^2+\|\partial_2 b^\h\|_{H^1}^2\right)+C\|(\nabla_\h u^\h,\nabla_\h u^3)\|_{H^2}^2\|b^\h\|_{H^2}^2\nonumber\\
	&+C\left(\|(\partial_1 b^\h,\partial_1b^3)\|_{H^2}^2+\|(\partial_2 b^\h,\partial_2b^3)\|_{H^1}^2\right)\|b^\h\|_{H^2}^2.
\end{align}
$J_{32}$ is difficult term owing to the loss of vertical dissipation in velocity and magnetic equation. To skip this obstacle, firstly, on account of  $\mathrm{div}u=\mathrm{div}b=0$, we subtly split $J_{32}$ as
\begin{align*}
	J_{32}=&\sum_{j=1}^2\int_{\R^3}\left(-\partial_{3}^2u^j\partial_jb^\h-2\partial_{3}u^j\partial_j \partial_{3}b^\h+\partial_{3}^2b^1\partial_1u^\h+\partial_{3}^2b^2\partial_2u^\h+2\partial_{3}b^j\partial_j \partial_{3}u^\h\right)\partial_{3}^2b^\h\dx\\
	&+\int_{\R^3}\left(\partial_{3}\mathrm{div_h}u^\h\partial_3b^\h+2(\partial_1u^1+\partial_2u^2)\partial_3^2b^\h-\partial_{3}\mathrm{div_h}b^\h\partial_3u^\h-2\mathrm{div_h}b^\h\partial_3^2u^\h\right)\partial_{3}^2b^\h\dx\\
	\leqq&C\int_{\R^3}|\partial_{3}^2u^\h\nablah b^\h\partial_{3}^2b^\h|\dx+C\int_{\R^3}|\partial_{3}u^\h\nablah \partial_{3}b^\h\partial_{3}^2b^\h|\dx+C\int_{\R^3}|\partial_{3}b^\h\nablah \partial_{3}u^\h\partial_{3}^2b^\h|\dx\\
	&+\int_{\R^3}\left(2\partial_1u^1\partial_3^2b^\h+\partial_{3}^2b^1\partial_1u^\h\right)\partial_{3}^2b^\h\dx+\int_{\R^3}\left(2\partial_2u^2\partial_3^2b^\h+\partial_{3}^2b^2\partial_2u^\h\right)\partial_{3}^2b^\h\dx\\
	\eqdefa& J_{321}+\cdots+J_{325}.
\end{align*}
Applying H\"older's inequality and \eqref{L^infty_1D} again, we deduce that
\begin{align*}
	J_{321}\lesssim&\| \partial_3^2u^\h \|_{L^2_{x_1}L^\infty_{x_2}L^2_{x_3}} \|\nablah b^\h\|_{L^2_{x_1}L^2_{x_2}L^\infty_{x_3}}\|\partial_{3}^2b^\h \|_{L^\infty_{x_1}L^2_{x_2}L^2_{x_3}} \\
	\lesssim& \|\nabla_\mathrm{h} u^\mathrm{h}\|_{H^2}^\frac12\|\partial_1b^\h\|_{H^2}^\frac12\left(\|\partial_1 b^\h\|_{H^2}+\|\partial_2 b^\h\|_{H^1}\right)\|u^\mathrm{h}\|_{H^2}^\frac12\| b^\h\|_{H^2}^\frac12,\\
	J_{322}\lesssim&  \| \partial_3u^\h \|_{L^2_{x_1}L^\infty_{x_2}L^\infty_{x_3}}\|\nablah\partial_{3}b^\h\|_{L^2}\|\partial_{3}^2b^\h \|_{L^\infty_{x_1}L^2_{x_2}L^2_{x_3}} \\
	\lesssim& \|\nabla_\mathrm{h} u^\mathrm{h}\|_{H^2}^\frac12\|\partial_1b^\h\|_{H^2}^\frac12\left(\|\partial_1 b^\h\|_{H^2}+\|\partial_2 b^\h\|_{H^1}\right)\|u^\mathrm{h}\|_{H^2}^\frac12\| b^\h\|_{H^2}^\frac12,\\
	J_{323}\lesssim& \| \partial_3b^\h \|_{L^2_{x_1}L^\infty_{x_2}L^2_{x_3}}\|\partial_{3}\nablah u^\h\|_{L^2_{x_1}L^2_{x_2}L^\infty_{x_3}}\|\partial_{3}^2b^\h \|_{L^\infty_{x_1}L^2_{x_2}L^2_{x_3}} \\
	\lesssim& \|\nablah u^\h\|_{H^2}\|\partial_1 b^\h\|_{H^2}^\frac12\|\partial_2 b^\h\|_{H^1}^\frac12\| b^\h\|_{H^2}.
\end{align*}
For $J_{324}$, by integration by parts, H\"older's inequality and \eqref{L^infty_1D}, it yields
\begin{align*}
	J_{324}=&-2\int_{\R^3}2u^1\partial_1\partial_3^2b^\h\partial_{3}^2b^\h+u^\h\partial_1\partial_3^2b^1\partial_{3}^2b^\h+u^1\partial_1\partial_3^2b^\h\partial_{3}^2b^1\dx\\
	\lesssim&\| u^\h \|_{L^2_{x_1}L^\infty_{x_2}L^\infty_{x_3}}\|\partial_1\partial_3^2b^\h\|_{L^2}\|\partial_{3}^2b^\h \|_{L^\infty_{x_1}L^2_{x_2}L^2_{x_3}}\\
	\lesssim &\|\nabla_\mathrm{h} u^\mathrm{h}\|_{H^2}^\frac12\|\partial_1b^\h\|_{H^2}^\frac32\|u^\mathrm{h}\|_{H^2}^\frac12\| b^\h\|_{H^2}^\frac12,
\end{align*}
As a result, applying Young's inequality, we obtain
\begin{align}\label{J321-324}
	\sum_{i=1}^{4}J_{32i}\lesssim &\|\nabla_\mathrm{h} u^\mathrm{h}\|_{H^2}^\frac12\|\partial_1b^\h\|_{H^2}^\frac12\left(\|\partial_1 b^\h\|_{H^2}+\|\partial_2 b^\h\|_{H^1}\right)\|u^\mathrm{h}\|_{H^2}^\frac12\| b^\h\|_{H^2}^\frac12\nonumber\\
	&+\|\nablah u^\h\|_{H^2}\|\partial_1 b^\h\|_{H^2}^\frac12\|\partial_2 b^\h\|_{H^1}^\frac12\| b^\h\|_{H^2}\nonumber\\
	\leqq& \frac{1}{100}\|(\nabla_\h u^\h,\partial_1 b^\h)\|_{H^2}^2+C\left(\|\partial_1 b^\h\|_{H^2}^2+\|\partial_2 b^\h\|_{H^1}^2\right)\|(u^\h ,b^\h)\|_{H^2}^2,
\end{align}
and for $J_{325}$, we present the estimate directly, while the detailed proof is provided in the Lemma \ref{wild},
\begin{align}\label{J325}
	J_{325}\leqq&\frac{\mathrm{d}}{\dt}\int_{\R^3} b^i\partial_3^2b^j\partial_3^2b^k-b^i\partial_3^2u^j\partial_3^2u^k\dx+\frac{3}{100}\|(\nablah u^\h,\partial_1b^\h)\|_{H^2}^2\nonumber\\
	&+C\left(\|(\nablah u^\h,\partial_1b^\h)\|_{H^2}^2+\|(\partial_2b^\h,\partial_2b^3)\|_{H^1}^2\right)\|(u^\h,b^\h)\|_{H^2}^2\nonumber\\
	&+C\left(\|\nablah u^3\|_{H^2}^2\|u^3\|_{H^2}^2+ \|(\partial_2b^\h,\partial_2b^3)\|_{H^1}^2\|(b^\h,b^3)\|_{H^2}^2\right)\|u^\h\|_{H^2}^2\nonumber\\	&+C\left(\|\partial_2b^3\|_{H^1}^2\|b^3\|_{H^2}^2+\|(\nablah u^\h,\nablah u^3)\|_{H^2}^2\|(u^\h,u^3)\|_{H^2}^2\right)\|b^\h\|_{H^2}^2.
\end{align}
Combining \eqref{J321-324} and \eqref{J325} gives the estimate of $J_{32}$, together with \eqref{J3,4deco} and \eqref{J31}, we then obtain
\begin{align}\label{J_3+J_4}
	J_3+J_4\leqq&\frac{\mathrm{d}}{\dt}\int_{\R^3} b^i\partial_3^2b^j\partial_3^2b^k-b^i\partial_3^2u^j\partial_3^2u^k\dx+\frac{1}{20}\|(\nablah u^\h,\partial_1b^\h)\|_{H^2}^2+\frac{1}{100}\|\partial_2b^\h\|_{H^1}^2\nonumber\\
	&+C\left(\|(\nablah u^\h,\partial_1b^\h)\|_{H^2}^2+\|(\nablah u^3,\partial_1b^3)\|_{H^2}^2+\|(\partial_2b^\h,\partial_2b^3)\|_{H^1}^2\right)\|(u^\h,b^\h)\|_{H^2}^2\nonumber\\
	&+C\left(\|\nablah u^3\|_{H^2}^2\|u^3\|_{H^2}^2+ \|(\partial_2b^\h,\partial_2b^3)\|_{H^1}^2\|(b^\h,b^3)\|_{H^2}^2\right)\|u^\h\|_{H^2}^2\nonumber\\	&+C\left(\|\partial_2b^3\|_{H^1}^2\|b^3\|_{H^2}^2+\|(\nablah u^\h,\nablah u^3)\|_{H^2}^2\|(u^\h,u^3)\|_{H^2}^2\right)\|b^\h\|_{H^2}^2.
\end{align}
By integration by parts and $\mathrm{div}u=\mathrm{div}b=0$, it is obvious that $J_{5}+J_{6}=J_{7}=0$, and
\begin{align*}
	J_{8}&=-\left(\partial_i^2\nabla_\mathrm{h}p| \partial_i^2u^\mathrm{h}\right) =\left( \partial_i^2p| \partial_i^2\mathrm{div_h}u^\mathrm{h}\right)=-I_8.
\end{align*}
Hence, from \eqref{ii}, by Young inequality, the term $J_8$ could be dominated as
\begin{align}\label{J_8}
	J_8\lesssim&\left(\|\nablah u^\h\|_{H^2}\|u^\h\|_{H^2}+ \|\nablah u^\h\|_{H^2}^\frac12\|\nablah u^3\|_{H^2}^\frac12\|u^\h\|_{H^2}^\frac12\|u^3\|_{H^2}^\frac12 \right)\|\nablah u^\h\|_{H^{2}}\nonumber\\
	&+\left(\|\partial_1b^\h\|_{H^2}^\frac12\|\partial_2b^\h\|_{H^1}^\frac12\|b^\h\|_{H^2}+\|\partial_1b^\h\|_{H^2}^\frac12\|\partial_2b^3\|_{H^1}^\frac12\|b^\h\|_{H^2}^\frac12\|b^3\|_{H^2}^\frac12\right)\|\nablah u^\h\|_{H^{2}}\nonumber\\
	\leqq&\frac{1}{100}\|(\nablah u^\h,\partial_1b^\h)\|_{H^2}^2+C\left(\|(\nablah u^\h,\partial_1b^\h)\|_{H^2}^2+\|\partial_2b^\h\|_{H^1}^2\right)\|(u^\h,b^\h)\|_{H^2}^2\nonumber\\
	&+C\|\nablah u^3\|_{H^2}^2\|u^3\|_{H^2}^2\|u^\h\|_{H^2}^2+C\|\partial_2b^3\|_{H^1}^2\|b^3\|_{H^2}^2\|b^\h\|_{H^2}^2.
\end{align}
Inserting \eqref{J_1}, \eqref{J_2}, \eqref{J_3+J_4}, \eqref{J_8} into \eqref{en_H2_h} and combining \eqref{Esti.h.L2}, we get \eqref{uhbhH2}.

\subsection{The proof of \eqref{p2bhH1}}
Finally, to control $H^{2}$-energy estimates of  $(u^\h,b^\h)$, we require a separate estimate for $\|\partial_2b^\h\|_{H^1}$.
By taking $L^2$ inner product to $u^\h$ equation of \eqref{eq.MHD} with $\partial_2b^\h$, it yields
\begin{align}\label{p2bh_enl2}
	\|\partial_2b^\h\|_{L^2}^2= F_1+\cdots+F_{5},
\end{align}
where $F_1-F_{5}$ are determined by \eqref{p2b_enl2}. Combining \eqref{f1deco}, \eqref{f_1} and by Young's inequality, we find
\begin{align}\label{F1h}
	F_1\leqq& \frac{\mathrm{d}}{\mathrm{d} t}\int_{\R^3} u^\h\partial_2 b^\h\mathrm{d} x+\|\nablah u^\h\|_{H^2}\|\partial_1 b^\h\|_{H^2}+\|\nablah u^\h\|_{H^2}^2\nonumber\\
	&+C\|\nablah u^\h\|_{H^2}\|(\nablah u^\h,\nablah u^3)\|_{H^2}^\frac12\|\partial_1 b^\h\|_{H^2}^\frac12\|( u^\h, u^3)\|_{H^2}^\frac12\|b^\h\|_{L^2}^\frac12\nonumber\\
	&+C\|\nablah u^\h\|_{H^2}\|(\partial_1b^\h,\partial_1b^3)\|_{H^2}^\frac12\|\nablah u^\h\|_{H^2}^\frac12\|(b^\h,b^3)\|_{H^2}^\frac12\| u^\h\|_{H^2}^\frac12\nonumber\\
	\leqq&\frac{\mathrm{d}}{\mathrm{d} t}\int_{\R^3} u^\h\partial_2 b^\h\mathrm{d} x+(\frac32+\frac{1}{100})\|\nablah u^\h\|_{H^2}^2+(\frac12+\frac{1}{100})\|\partial_1 b^\h\|_{H^2}^2\nonumber\\
	&+C\|(\nablah u^\h,\nablah u^3)\|_{H^2}^2\|( u^\h, u^3)\|_{H^2}^2\| b^\h\|_{H^2}^2+C\|(\partial_2 b^\h,\partial_2 b^3)\|_{H^1}^2\|( b^\h, b^3)\|_{H^2}^2\| u^\h\|_{H^2}^2.
\end{align}
Similarly, from \eqref{f_2} and \eqref{f_4}, we implies that
\begin{align}\label{F2h}
	F_2	\lesssim &\|\partial_2 b^\h\|_{H^1}\|\nablah u^\h\|_{H^2}^\frac12\|(\nablah u^\h,\nablah u^3)\|_{H^2}^\frac12\| u^\h\|_{H^2}^\frac12\|( u^\h, u^3)\|_{H^2}^\frac12,\nonumber\\
	\leqq &\frac{1}{100}\left(\|\nablah u^\h\|_{H^2}^2+\|\partial_2 b^\h\|_{H^1}^2\right)+C\|(\nablah u^\h,\nablah u^3)\|_{H^2}^2\|( u^\h, u^3)\|_{H^2}^2\| u^\h\|_{H^2}^2,\\
	\label{F4h}
	F_4\lesssim& \|\partial_2 b^\h\|_{H^1}\|\partial_1 b^\h\|_{H^2}^\frac12\|(\partial_2 b^\h,\partial_2 b^3)\|_{H^1}^\frac12\| b^\h\|_{H^2}^\frac12\|( b^\h, b^3)\|_{H^2}^\frac12\nonumber\\
	\leqq &\frac{1}{100}\left(\|\partial_1 b^\h\|_{H^2}^2+\|\partial_2 b^\h\|_{H^1}^2\right)+C\|(\partial_2 b^\h,\partial_2 b^3)\|_{H^1}^2\|( b^\h, b^3)\|_{H^2}^2\| b^\h\|_{H^2}^2.
\end{align}
As to $F_5$, due to \eqref{ppp}, the pressure term $F_5$ could be transformed as
\begin{align*}
	F_5=\int_{\R^3} \nabla_\h(\Delta)^{-1}\mathrm{div}\left(u\cdot\nabla u-b\cdot\nabla b\right) \partial_2 b^\h \mathrm{d}x\lesssim
	\left(\|u\cdot\nabla u\|_{L^2}+\|b\cdot\nabla b\|_{L^2}\right)\|\partial_2 b^\h\|_{L^2}.
\end{align*}
Due to $\mathrm{div}u=0$, it is easy to observe
\begin{align*}
	\|u \cdot \nabla u\|_{L^2} \lesssim \|u^\h \cdot \nablah u\|_{L^2} + \|u^3 \partial_3 u^\h\|_{L^2} + \|u^3 \mathrm{div_h} u^\h\|_{L^2}\lesssim \|u^\h \cdot \nablah u\|_{L^2} + \|u^3 \nabla u^\h\|_{L^2}.
\end{align*}
Then taking the full advantage of \eqref{yi} with $f=u^\h$ (resp. $u^3$), $g=\nabla_\h u$ (resp. $\nabla u^\h$), we obtain
\begin{align*}
	\|u\cdot\nabla u\|_{L^2}\lesssim&\|u^\h\cdot\nabla_\h u\|_{L^2}+\|u^3\nabla u^\h\|_{L^2}\\
	\lesssim& \|\partial_1u^\h\|_{H^1} ^\frac12\|(\partial_2u^\h,\partial_2u^3)\|_{H^1} ^\frac12\| u^\h\|_{H^1} ^\frac12\| (u^\h,u^3)\|_{H^1} ^\frac12\\
	&+\|\partial_1u^3\|_{H^1} ^\frac12\|\partial_2u^\h\|_{H^1} ^\frac12\| u^3\|_{H^1} ^\frac12\| u^\h\|_{H^1} ^\frac12\\
	\lesssim& \|\nablah u^\h\|_{H^1} ^\frac12\|(\nablah u^\h,\nablah u^3)\|_{H^1} ^\frac12\| u^\h\|_{H^1} ^\frac12\| (u^\h,u^3)\|_{H^1} ^\frac12\\
	&+\|\nablah u^3\|_{H^1} ^\frac12\|\nablah u^\h\|_{H^1} ^\frac12\| u^3\|_{H^1} ^\frac12\| u^\h\|_{H^1} ^\frac12.
\end{align*}
Similarly, applying \eqref{yi} with $f=b^\h$ (resp. $\nabla b^\h$), $g=\nabla_\h b$ (resp. $b^3$), we obtain the result for the other term
\begin{align*}
	\|b\cdot\nabla b\|_{L^2}\lesssim&\|b^\h\cdot\nabla_\h b\|_{L^2}+\|b^3\nabla b^\h\|_{L^2}\\
	\lesssim& \|\partial_1b^\h\|_{H^1} ^\frac12\|(\partial_2b^\h,\partial_2b^3)\|_{H^1} ^\frac12\| b^\h\|_{H^1} ^\frac12\| (b^\h,b^3)\|_{H^1} ^\frac12+\|\partial_1b^\h\|_{H^2} ^\frac12\|\partial_2b^3\|_{L^2} ^\frac12\| b^\h\|_{H^2} ^\frac12\| b^3\|_{L^2} ^\frac12.
\end{align*}
Thus, $F_5$ can be estimated as follows
\begin{align}\label{F5}
	F_5\lesssim& \|\partial_2 b^\h\|_{L^2}\Big(\|\nablah u^3\|_{H^1} ^\frac12\|\nablah u^\h\|_{H^1} ^\frac12\| u^3\|_{H^1} ^\frac12\| u^\h\|_{H^1} ^\frac12\nonumber\\
	&+\|\nablah u^\h\|_{H^1} ^\frac12\|(\nablah u^\h,\nablah u^3)\|_{H^1} ^\frac12\| u^\h\|_{H^1} ^\frac12\| (u^\h,u^3)\|_{H^1} ^\frac12\Big)\nonumber\\
	&+\|\partial_2 b^\h\|_{L^2}\Big(\|\partial_1b^\h\|_{H^2} ^\frac12\|\partial_2b^3\|_{L^2} ^\frac12\| b^\h\|_{H^2} ^\frac12\| b^3\|_{L^2} ^\frac12\nonumber\\
	&+\|\partial_1b^\h\|_{H^1} ^\frac12\|(\partial_2b^\h,\partial_2b^3)\|_{H^1} ^\frac12\| b^\h\|_{H^1} ^\frac12\| (b^\h,b^3)\|_{H^1} ^\frac12\Big)\nonumber\\
	\leqq &\frac{1}{100}\left(\|(\partial_1b^\h,\nablah u^\h)\|_{H^2}^2+\|\partial_2 b^\h\|_{L^2}^2\right)+C\|(\partial_2b^\h,\partial_2b^3)\|_{H^1} ^2\| (b^\h,b^3)\|_{H^1} ^2\| b^\h\|_{H^1} ^2\nonumber\\
	&+C\|(\nablah u^\h,\nablah u^3)\|_{H^1} ^2\| (u^\h,u^3)\|_{H^1} ^2\| u^\h\|_{H^1} ^2.
\end{align}
Inserting \eqref{F3}, \eqref{F1h}-\eqref{F5} into \eqref{p2bh_enl2}, we get
\begin{align}\label{Esti.p2bhL2}
	\|\partial_2b^\h\|_{L^2}^2&\leqq\frac{\mathrm{d}}{\mathrm{d} t}\int_{\R^3} u^\h\partial_2 b^\h\mathrm{d} x+\frac{203}{100}\|\nablah u^\h\|_{H^2}^2+\frac{53}{100}\|\partial_1 b^\h\|_{H^2}^2+\frac{3}{100}\|\partial_2 b^\h\|_{H^1}^2+\frac12\|\partial_2 b^\h\|_{L^2}^2\nonumber\\
	&+C\left(\|(\nablah u^\h,\nablah u^3)\|_{H^2}^2\|( u^\h, u^3)\|_{H^2}^2+\|(\partial_2 b^\h,\partial_2 b^3)\|_{H^1}^2\|( b^\h, b^3)\|_{H^2}^2\right)\| (u^\h,b^\h)\|_{H^2}^2.
\end{align}

Then let us handle $\sum_{i=1}^3\|\partial_i\partial_2b^\h\|_{L^2}$.
We first get, by applying $\partial_{i}$ to the equations of  $u^\mathrm{h}$ of \eqref{eq.MHD} and then taking the $L^2$ inner product to it with $\partial_{i}\partial_{2}b^\mathrm{h}$, that
\begin{align}\label{p2bh_enH1}
	\|\partial_{i}\partial_{2}b^\mathrm{h}\|_{L^2}^2=K_1+\cdots+K_5,
\end{align}
where $K_1-K_{5}$ are determined by \eqref{p2b_enH1}. In the same method, from \eqref{k_1}, \eqref{k_2}, \eqref{k_4} and by Young's inequality, we obtain
\begin{align}\label{K1h}
	K_{1}
	\leqq&\frac{\mathrm{d}}{\mathrm{d}t}\int_{\R^3}\partial_{i}u^\h\partial_{i}\partial_{2}b^\h\mathrm{d}x+(\frac32+\frac{1}{100})\|\nablah u^\h\|_{H^2}^2+(\frac12+\frac{1}{100})\|\partial_1 b^\h\|_{H^2}^2\nonumber\\
	&+C\|(\partial_2b^\mathrm{h},\partial_2b^3)\|_{H^{1}}^2\|u^\h\|_{H^{2}}^2+C\|(\nabla_{\mathrm{h}}u^\h,\nabla_{\mathrm{h}}u^3)\|_{H^{2}}^2\|b^\h\|_{H^{2}}^2\nonumber\\
	&+C\|(\partial_1b^\h,\partial_1b^3)\|_{H^2}^2\|(b^\h,b^3)\|_{H^2}^2\|u^\h\|_{H^2}^2+C\|(\nablah u^\h,\nablah u^3)\|_{H^2}^2\|(u^\h,u^3)\|_{H^2}^2\|b^\h\|_{H^2}^2,
\end{align}
and
\begin{align}\label{K2h}
	K_2
	\leqq &\frac{1}{100}\left(\|\nablah u^\h\|_{H^2}^2+\|\partial_{2}b^\h\|_{H^1}^2\right)+C\|(\nablah u^\h,\nablah u^3)\|_{H^2}^2\|( u^\h, u^3)\|_{H^2}^2\| u^\h \|_{H^2}^2,\\\label{K4h}
	K_4
	\leqq &\frac{1}{100}\left(\|\partial_1 b^\h\|_{H^2}^2+\|\partial_{2}b^\h\|_{H^1}^2\right)+C\|(\partial_2 b^\h,\partial_2 b^3)\|_{H^1}^2\|( b^\h, b^3)\|_{H^2}^2\| b^\h \|_{H^2}^2.
\end{align}
Ultimately, with the same derivation of $I_8$, we find
\begin{align*}
	K_{5}=\left(\partial_{i}\nablah p|\partial_{i}\partial_2b^\h\right)_{L^2}\lesssim\|\partial_{i}\nablah p\|_{L^2}\|\partial_2\partial_ib^\h\|_{L^2},
\end{align*}
Then, combining \eqref{P_H2}, $K_5$ can be estimated as
\begin{align}\label{K5}
	K_5\lesssim&\left(\|\nablah u^\h\|_{H^2}\|u^\h\|_{H^2}+ \|\nablah u^\h\|_{H^2}^\frac12\|\nablah u^3\|_{H^2}^\frac12\|u^\h\|_{H^2}^\frac12\|u^3\|_{H^2}^\frac12 \right)\|\partial_2b^\h\|_{H^{1}}\nonumber\\
	&+\left(\|\partial_1b^\h\|_{H^2}^\frac12\|\partial_2b^\h\|_{H^1}^\frac12\|b^\h\|_{H^2}+\|\partial_1b^\h\|_{H^2}^\frac12\|\partial_2b^3\|_{H^1}^\frac12\|b^\h\|_{H^2}^\frac12\|b^3\|_{H^2}^\frac12\right)\|\partial_2b^\h\|_{H^{1}}\nonumber\\
	\leqq&\frac{1}{100}\left(\|(\nablah u^\h,\partial_1b^\h)\|_{H^2}^2+\|\partial_2b^\h\|_{H^{1}}^2\right)+C\left(\| \partial_1b^\h\|_{H^2}^2+\|\partial_2b^\h\|_{H^1}^2\right)\|b^\h\|_{H^2}^2\nonumber\\
	&+C\|\nablah u^\h\|_{H^2}^2\|u^\h\|_{H^2}^2+C\|\nablah u^3\|_{H^2}^2\|u^3\|_{H^2}^2\|u^\h\|_{H^2}^2+C\|\partial_2b^3\|_{H^1}^2\|b^3\|_{H^2}^2\|b^\h\|_{H^2}^2.
\end{align}
Inserting \eqref{K3}, \eqref{K1h}-\eqref{K5} into \eqref{p2bh_enH1}, we get
\begin{align}\label{Esti.p2bhH1}
	\|\partial_{i}\partial_{2}b^\mathrm{h}\|_{L^2}^2\leqq&\frac{\mathrm{d}}{\mathrm{d}t}\int_{\R^3}\partial_{i}u^\h\partial_{i}\partial_{2}b^\h\mathrm{d}x+\frac{203}{100}\|\nablah u^\h\|_{H^2}^2+\frac{53}{100}\|\partial_1 b^\h\|_{H^2}^2+\frac{3}{100}\|\partial_2 b^\h\|_{H^1}^2\nonumber\\
	&+\frac12\|\partial_2 \partial_ib^\h\|_{L^2}^2+C\|(\nabla_{\mathrm{h}}u^\h,\partial_1b^\h)\|_{H^{2}}^2\|(u^\h,b^\h)\|_{H^{2}}^2\nonumber\\
	&+C\|(\partial_2b^\mathrm{h},\partial_2b^3)\|_{H^{1}}^2\|(u^\h,b^\h)\|_{H^{2}}^2+C\|\nabla_{\mathrm{h}}u^3\|_{H^{2}}^2\|b^\h\|_{H^{2}}^2\nonumber\\
	&+C\left(\|(\partial_1b^\h,\partial_1b^3)\|_{H^2}^2\|(b^\h,b^3)\|_{H^2}^2+\|(\nablah u^\h,\nablah u^3)\|_{H^2}^2\|(u^\h,u^3)\|_{H^2}^2\right)\|u^\h\|_{H^2}^2\nonumber\\
	&+C\left(\|(\partial_2b^\h,\partial_2b^3)\|_{H^1}^2\|(b^\h,b^3)\|_{H^2}^2+\|(\nablah u^\h,\nablah u^3)\|_{H^2}^2\|(u^\h,u^3)\|_{H^2}^2\right)\|b^\h\|_{H^2}^2.
\end{align}
Combining \eqref{Esti.p2bhL2} and \eqref{Esti.p2bhH1}, we have \eqref{p2bhH1}.

\appendix
\section{The estimate of wild terms}
In this part,  we shall give a key tool to treat the  problematic term $J_{325}$. The term derives from the $H^{2}$-energy estimates of  $(u^\h,b^\h)$.
Due to their high-order derivative structures, these terms cannot be directly bounded via standard anisotropic Sobolev inequalities. To address this challenge, we develop a more refined analytical approach to derive the required estimates.
\begin{lem}\label{wild}
	The following estimate hold when the right-hand terms are all bounded.
	\begin{align}\label{Esti.wild}
		\int_{\R^3}\partial_2u^i\partial_3^2b^j\partial_3^2b^k\dx\leqq&\frac{\mathrm{d}}{\dt}\int_{\R^3} b^i\partial_3^2b^j\partial_3^2b^k-b^i\partial_3^2u^j\partial_3^2u^k\dx+\frac{3}{100}\|(\nablah u^\h,\partial_1b^\h)\|_{H^2}^2\nonumber\\
		&+C\left(\|(\nablah u^\h,\partial_1b^\h)\|_{H^2}^2+\|(\partial_2b^\h,\partial_2b^3)\|_{H^1}^2\right)\|(u^\h,b^\h)\|_{H^2}^2\nonumber\\
		&+C\left(\|\nablah u^3\|_{H^2}^2\|u^3\|_{H^2}^2+ \|(\partial_2b^\h,\partial_2b^3)\|_{H^1}^2\|(b^\h,b^3)\|_{H^2}^2\right)\|u^\h\|_{H^2}^2\nonumber\\	&+C\left(\|\partial_2b^3\|_{H^1}^2\|b^3\|_{H^2}^2+\|(\nablah u^\h,\nablah u^3)\|_{H^2}^2\|(u^\h,u^3)\|_{H^2}^2\right)\|b^\h\|_{H^2}^2,
	\end{align}
	where $i,j,k=1,2$.
\end{lem}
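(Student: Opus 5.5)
We follow the iterative scheme sketched in the introduction (\eqref{W1}--\eqref{W2}). Write $W_1=\int_{\R^3}\partial_2u^i\partial_3^2b^j\partial_3^2b^k\dx$ with $i,j,k\in\{1,2\}$.

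\textbf{Step 1: substitute for $\partial_2 u^i$.} Using \eqref{ha} we replace $\partial_2u^i$ by $\partial_tb^i+u\cdot\nabla b^i-\partial_1^2b^i-b\cdot\nabla u^i$. The terms $u\cdot\nabla b^i$, $\partial_1^2b^i$ and $b\cdot\nabla u^i$ are multiplied by $\partial_3^2b^j\partial_3^2b^k$ and estimated as follows.
\begin{itemize}
\item $\partial_1^2b^i$: integrate by parts in $x_1$ once, which puts one $\partial_1$ on a factor $\partial_3^2b$. Then use \eqref{L^infty_1D} in the form of \eqref{er}; this gives $\|\partial_1b^\h\|_{H^2}^{3/2}\|b^\h\|_{H^2}^{1/2}\times(\cdots)$-type bounds.
\item The quartic terms: use \eqref{san}, placing $\partial_1$ on the $\partial_3^2b^\h$ factors and $\partial_2$ or $\partial_m$ on the low-order factors. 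Where $\partial_3 u^3$ or $\partial_3 b^3$ appears, rewrite it as $-\mathrm{div_h}$ of the horizontal component so that only horizontal-small quantities carry the weight.
\end{itemize}

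\textbf{Step 2: handle the time derivative.} The $\partial_tb^i$ piece is
$$\frac{\mathrm{d}}{\dt}\int_{\R^3} b^i\partial_3^2b^j\partial_3^2b^k\dx-\int_{\R^3} b^i\bigl(\partial_t\partial_3^2b^j\partial_3^2b^k+\partial_3^2b^j\partial_t\partial_3^2b^k\bigr)\dx.$$
In the last integral, substitute the $b$-equation again. This produces:
\begin{itemize}
\item Transport terms $u\cdot\nabla\partial_3^2b$: integrate by parts using $\mathrm{div}\,u=0$, which throws the derivative onto $b^i$. Commutator terms with $\partial_3^2u\cdot\nabla b$ are treated with $\partial_3u^3=-\mathrm{div_h}u^\h$.
\item Dissipation terms $\partial_1^2\partial_3^2b$: integrate by parts once in $x_1$ and bound by $\frac1{100}\|\partial_1b^\h\|_{H^2}^2$ plus \eqref{san}-type products.
\item The term $b\cdot\nabla\partial_3^2u$.
\item The linear term $\partial_2\partial_3^2u^j$. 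Combined symmetrically, this gives $\int b^i\partial_2\partial_3^2u^j\partial_3^2b^k+b^i\partial_3^2b^j\partial_2\partial_3^2u^k$. We write $\partial_2\partial_3^2b^k$ using the $u$-equation rearranged as \eqref{bbbb}, i.e. $\partial_2b=\partial_tu+u\cdot\nabla u-\Delta_\h u+\nabla p-b\cdot\nabla b$. After integrating by parts in $x_2$, this yields the second time derivative $-\frac{\mathrm{d}}{\dt}\int b^i\partial_3^2u^j\partial_3^2u^k$, with $\partial_tb^i$ again replaced via the $b$-equation.
\end{itemize}

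\textbf{Step 3: the term $W_2$.} This is the main obstacle: $W_2=-\int b^i\partial_3^2b^k\,b\cdot\nabla\partial_3^2u^j+(j\leftrightarrow k)$, which has three derivatives on $u$ and no dissipation in $x_3$.
\begin{itemize}
\item Integrate by parts to move $b\cdot\nabla$ off $\partial_3^2u^j$. This gives $\int b^i\partial_3^2u^j\,\partial_3^2(b\cdot\nabla b^k)$ plus lower-order commutators, which are controlled by \eqref{san}.
\item Substitute \eqref{bbbb} for $b\cdot\nabla b^k$. The $\partial_t u$ part combines with the time-derivative term generated in Step 2.
\item The pieces $\Delta_\h\partial_3^2u$, $\partial_2\partial_3^2b$ and $\nabla\partial_3^2p$ are handled as follows. $\Delta_\h\partial_3^2u$ is moved by one horizontal integration by parts, and $\nabla_\h\partial_3 u^\h$ is dissipative. $\partial_2\partial_3^2b$ cancels against its counterpart from Step 2. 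The pressure is estimated through \eqref{P_H2}-type bounds.
\item The remaining transport term $\int b^i\partial_3^2u^j\,u\cdot\nabla\partial_3^2u^k+(j\leftrightarrow k)$ is symmetric in $j,k$. It therefore collapses by integration by parts to $-\int u\cdot\nabla b^i\,\partial_3^2u^j\partial_3^2u^k$, which we bound by \eqref{san} with $\partial_1,\partial_2$ placed on the $u^\h$ factors.
\end{itemize}

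\textbf{Bookkeeping.} Throughout, we must keep every factor involving $u^3$ or $b^3$ paired with a horizontal quantity, so that each term lands in one of the products listed on the right-hand side of \eqref{Esti.wild}. Young's inequality then produces the $\frac3{100}$ absorption term.
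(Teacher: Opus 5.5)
Your architecture is the paper's: substitute $\partial_2u^i$ via \eqref{ha}, peel off $\frac{\mathrm{d}}{\mathrm{d}t}\int b^i\partial_3^2b^j\partial_3^2b^k\dx$, re-insert the $b$-equation, convert $b\cdot\nabla\partial_3^2u$ into $\partial_3^2(b\cdot\nabla b)$, and use the $u$-equation. However, one step fails as written.

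In Step 1 you propose to estimate $\int u\cdot\nabla b^i\,\partial_3^2b^j\partial_3^2b^k\dx$ by \eqref{san} with $\partial_1$ on both $\partial_3^2b$ factors. That forces $(m,n)=(2,1)$ and hence the factor $\|\partial_2\nabla b^i\|_{H^1}$, i.e.\ $\partial_2b^{\mathrm h}\in H^2$. This is not controlled; only $\|\partial_2 b\|_{H^1}$ is. The other choice of $(m,n)$ needs $\|\partial_2b^k\|_{H^2}$ instead.

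A better placement will not help. The factors $\partial_3^2b^j,\partial_3^2b^k$ have only $\partial_1$-regularity, so they must sit in $L^2$ in both $x_2$ and $x_3$. This forces $\nabla b^i$ into $L^\infty_{x_2}L^\infty_{x_3}$, and \eqref{L^infty_1D} then requires $\partial_2\partial_3\nabla b^i\in L^2$. The same manoeuvre does work for $b\cdot\nabla u^i$, where the $\partial_2$ lands on $\nabla u^i$. It also works for the $u$-analogue $\int u\cdot\nabla b^i\,\partial_3^2u^j\partial_3^2u^k\dx$ in your Step 3, where $(m,n)=(1,2)$ is admissible since $\partial_2\partial_3^2u^{\mathrm h}\in L^2$.

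Your Step 2 integration by parts of the transport terms produces exactly $-\int u\cdot\nabla b^i\,\partial_3^2b^j\partial_3^2b^k\dx$. So you would be estimating the same uncontrollable quantity twice. The missing point is that these two terms must be cancelled, not bounded:
\[
\int_{\R^3} u\cdot\nabla b^i\,\partial_3^2b^j\partial_3^2b^k\dx+\int_{\R^3} b^i\,u\cdot\nabla\bigl(\partial_3^2b^j\partial_3^2b^k\bigr)\dx=\int_{\R^3} u\cdot\nabla\bigl(b^i\partial_3^2b^j\partial_3^2b^k\bigr)\dx=0.
\]
This is exactly how the paper proceeds. Afterwards only the commutators $\partial_3^2u\cdot\nabla b^j$ and $2\partial_3u\cdot\nabla\partial_3b^j$ survive (the paper's $S_3$).

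Two smaller points, both concerning the linear term and signs.

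\textbf{The linear term needs no iteration.} It enters as $-\int b^i\partial_3^2b^k\,\partial_2\partial_3^2u^j\dx-(j\leftrightarrow k)$. Since $\partial_2\partial_3^2u^{\mathrm h}$ is dissipative, it is bounded by $\|b^{\mathrm h}\|_{L^2_{x_1}L^\infty_{x_2}L^\infty_{x_3}}\|\partial_3^2b^{\mathrm h}\|_{L^\infty_{x_1}L^2_{x_2}L^2_{x_3}}\|\partial_2\partial_3^2u^{\mathrm h}\|_{L^2}$, as in the paper's $S_{23}$. The $-\partial_2b^k$ piece that \eqref{bbbb} produces in Step 3 is handled the same way after an $x_2$ integration by parts ($S_{2133}$). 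If you do rewrite it with \eqref{bbbb}, the $\partial_2\partial_3^2b^k$ term can be used only once. Either you substitute for it, which generates $-\int b^i\partial_3^2u^j\partial_3^2(b\cdot\nabla b^k)\dx$ and cancels your Step 3 main term. Or you cancel it against the $-\partial_2b^k$ piece from Step 3. Your text does both, which would count the $\partial_tu$ contribution twice.

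\textbf{The sign of the second correction.} With signs tracked, the correction comes out as $+\frac{\mathrm{d}}{\mathrm{d}t}\int b^i\partial_3^2u^j\partial_3^2u^k\dx$, not $-$. The top-order term $\int b^i\partial_3^2b^k\,b\cdot\nabla\partial_3^2u^j\dx$ from $\frac{\mathrm{d}}{\mathrm{d}t}\int b^i\partial_3^2b^j\partial_3^2b^k\dx$ and the term $\int b^i\partial_3^2u^j\,b\cdot\nabla\partial_3^2b^k\dx$ from $\frac{\mathrm{d}}{\mathrm{d}t}\int b^i\partial_3^2u^j\partial_3^2u^k\dx$ (plus $j\leftrightarrow k$) add up to the harmless $-\int b\cdot\nabla b^i(\partial_3^2u^j\partial_3^2b^k+\partial_3^2b^j\partial_3^2u^k)\dx$. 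Their difference keeps uncontrolled $b^3\partial_3^3$ terms. The first line of the paper's \eqref{S21deco} contains the same sign slip. It is harmless for Theorem \ref{main.Th}, which only uses $|Z(t)|\lesssim\|(u^{\mathrm h},b^{\mathrm h})\|_{H^2}^2(1+\|(u^{\mathrm h},b^{\mathrm h})\|_{H^2})$.
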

\begin{proof} We first note that \eqref{ha} gives
	\begin{align*}
		\int_{\R^3}&\partial_2u^i\partial_3^2b^j\partial_3^2b^k\dx=\int_{\R^3}({\partial_t}b^i+u\cdot\nabla{b}^i-\partial_1^2b^i -b\cdot\nabla{u}^i)\partial_3^2b^j\partial_3^2b^k\dx \\
		&=\int_{\R^3}\frac{\mathrm{d}}{\dt}(b^i\partial_3^2b^j\partial_3^2b^k)-b^i\partial_t\partial_3^2b^j\partial_3^2b^k-b^i\partial_t\partial_3^2b^k\partial_3^2b^j+u\cdot\nabla b^i\partial_3^2b^j\partial_3^2b^k\dx+S^{(i,j,k)}_1\\
		&=\frac{\mathrm{d}}{\dt}\int_{\R^3} b^i\partial_3^2b^j\partial_3^2b^k\dx+\int_{\R^3} b^i\partial_3^2b^k\partial_3^2(u\cdot\nabla{b}^j-\partial_1^2b^j -b\cdot\nabla{u}^j-\partial_2u^j)\dx\\
		&\quad\int_{\R^3} b^i\partial_3^2b^j\partial_3^2(u\cdot\nabla{b}^k-\partial_1^2b^k -b\cdot\nabla{u}^k-\partial_2u^k)\dx+\int_{\R^3} u\cdot\nabla b^i\partial_3^2b^j\partial_3^2b^k\dx+S^{(i,j,k)}_1\\
		&=\int_{\R^3} b^i\partial_3^2b^k\partial_3^2(u\cdot\nabla{b}^j)+b^i\partial_3^2b^j\partial_3^2(u\cdot\nabla{b}^k)+u\cdot\nabla b^i\partial_3^2b^j\partial_3^2b^k\dx+S^{(i,j,k)}_1+S^{(i,j,k)}_2\\
		&=\int_{\R^3} b^i\partial_3^2b^k(\partial_3^2u\cdot\nabla{b}^j+2\partial_3u\cdot\nabla\partial_3{b}^j)+b^i\partial_3^2b^j(\partial_3^2u\cdot\nabla{b}^k+2\partial_3u\cdot\nabla\partial_3{b}^k)\dx\\
		&\quad+\int_{\R^3} \big(b^i\partial_3^2b^ku\cdot\nabla\partial_3^2{b}^j+
		b^i\partial_3^2b^ju\cdot\nabla\partial_3^2{b}^k+u\cdot\nabla b^i\partial_3^2b^j\partial_3^2b^k\big)\dx+S^{(i,j,k)}_1+S^{(i,j,k)}_2\\
		&=\int_{\R^3} u\cdot \nabla(b^i\partial_3^2b^k\partial_3^2{b}^j)\dx+S^{(i,j,k)}_1+S^{(i,j,k)}_2+S^{(i,j,k)}_3=S^{(i,j,k)}_1+S^{(i,j,k)}_2+S^{(i,j,k)}_3,
	\end{align*}
	where
	\begin{align*}
		S^{(i,j,k)}_1&=-\int_{\R^3} (\partial_1^2b^i +b\cdot\nabla{u}^i)\partial_3^2b^j\partial_3^2b^k\dx,\\
		S^{(i,j,k)}_2&=\frac{\mathrm{d}}{\dt}\int_{\R^3} b^i\partial_3^2b^j\partial_3^2b^k\dx-\int_{\R^3} b^i\partial_3^2b^k\partial_3^2(\partial_1^2b^j +b\cdot\nabla{u}^j+\partial_2u^j)\dx\\
		&\quad-\int_{\R^3} b^i\partial_3^2b^j\partial_3^2(\partial_1^2b^k +b\cdot\nabla{u}^k+\partial_2u^k)\dx,\\
		S^{(i,j,k)}_3&=\int_{\R^3} b^i\partial_3^2b^k(\partial_3^2u\cdot\nabla{b}^j+2\partial_3u\cdot\nabla\partial_3{b}^j)+b^i\partial_3^2b^j(\partial_3^2u\cdot\nabla{b}^k+2\partial_3u\cdot\nabla\partial_3{b}^k)\dx,
	\end{align*}
	and we bound these decomposed terms in order. As to $S^{(i,j,k)}_1$, by integration by parts, H\"older's inequality and \eqref{san} with $f=b^\h$ (resp. $b^3$), $g=\nabla{u}^i,h=\partial_3^2b^j,v=\partial_3^2b^k$, $(m,n)=(2,1)$, we deduce
	\begin{align}\label{S1}
		S^{(i,j,k)}_1=&\int_{\R^3}-b\cdot\nabla{u}^i\partial_3^2b^j\partial_3^2b^k+ \partial_1b^i(\partial_1\partial_3^2b^j\partial_3^2b^k+\partial_3^2b^j\partial_1\partial_3^2b^k) \dx\nonumber\\
		\lesssim&\|(\partial_2b^\h,\partial_2b^3)\|_{H^1}^\frac12\|(b^\h,b^3)\|_{H^1}^\frac12\|\partial_2u^\h\|_{H^2}^\frac12\|u^\h\|_{H^2}^\frac12\|\partial_1b^\h\|_{H^2}\|b^\h\|_{H^2}\nonumber\\
		&+\|\partial_1b^\h\|_{L^\infty}\|\partial_1\partial_3^2b^\h\|_{L^2}\|\partial_3^2b^\h\|_{L^2}\nonumber\\
		\lesssim&\|(\partial_2b^\h,\partial_2b^3)\|_{H^1}^\frac12\|\nablah u^\h\|_{H^2}^\frac12\|\partial_1b^\h\|_{H^2}\|(b^\h,b^3)\|_{H^2}^\frac12\|u^\h\|_{H^2}^\frac12\|b^\h\|_{H^2}+\|\partial_1b^\h\|_{H^2}^2\|b^\h\|_{H^2}\nonumber\\
		\leqq &\frac{1}{100}\|(\nablah u^\h,\partial_1b^\h)\|_{H^2}^2+C\|\partial_1b^\h\|_{H^2}^2\|b^\h\|_{H^2}^2+C\|(\partial_2b^\h,\partial_2b^3)\|_{H^1}^2\|(b^\h,b^3)\|_{H^2}^2\|u^\h\|_{H^2}^2.
	\end{align}
	The term $S^{(i,j,k)}_2$ is a difficult term. It could be written as
	\begin{align*}
		S^{(i,j,k)}_2=&\frac{\mathrm{d}}{\dt}\int_{\R^3} b^i\partial_3^2b^j\partial_3^2b^k\dx-\int_{\R^3} b^i\partial_3^2b^k b\cdot\nabla{\partial_3^2u}^j+b^i\partial_3^2b^jb\cdot\nabla{\partial_3^2u}^k\dx\\
		&-\int_{\R^3} b^i\partial_3^2b^k(\partial_3^2 b\cdot\nabla{u}^j+2\partial_3 b\cdot\nabla{\partial_3u}^j)+b^i\partial_3^2b^j(\partial_3^2b\cdot\nabla{u}^k+2\partial_3b\cdot\nabla{\partial_3u}^k)\dx\\
		&-\int_{\R^3} b^i\partial_3^2b^k\partial_3^2(\partial_1^2b^j +\partial_2u^j)+ b^i\partial_3^2b^j\partial_3^2(\partial_1^2b^k +\partial_2u^k)\dx\\
		\eqdefa&\frac{\mathrm{d}}{\dt}\int_{\R^3} b^i\partial_3^2b^j\partial_3^2b^k\dx+S^{(i,j,k)}_{21}+S^{(i,j,k)}_{22}+S^{(i,j,k)}_{23},
	\end{align*}
	where
	\begin{align*}
		S^{(i,j,k)}_{21}&=-\int_{\R^3} b^i\partial_3^2b^k b\cdot\nabla{\partial_3^2u}^j+b^i\partial_3^2b^jb\cdot\nabla{\partial_3^2u}^k\dx,\\
		S^{(i,j,k)}_{22}&=-\int_{\R^3} b^i\partial_3^2b^k(\partial_3^2 b\cdot\nabla{u}^j+2\partial_3 b\cdot\nabla{\partial_3u}^j)+b^i\partial_3^2b^j(\partial_3^2b\cdot\nabla{u}^k+2\partial_3b\cdot\nabla{\partial_3u}^k)\dx,\\
		S^{(i,j,k)}_{23}&=-\int_{\R^3} b^i\partial_3^2b^k\partial_3^2(\partial_1^2b^j +\partial_2u^j)+ b^i\partial_3^2b^j\partial_3^2(\partial_1^2b^k +\partial_2u^k)\dx.
	\end{align*}
	For $S^{(i,j,k)}_{22}$, it is evident that we have
	\begin{align*}
		S^{(i,j,k)}_{22}\lesssim\int_{\R^3} |b^\h\partial_3^2b^\h\partial_3^2 b\cdot\nabla{u}^\h|+|b^\h\partial_3^2b^\h\partial_3 b\cdot\nabla{\partial_3u}^\h|\dx.
	\end{align*}
	And then, for the two terms above, assign different indices of \eqref{san}: take $f=b^\h,g=\nabla{u}^\h,h=\partial_3^2b^\h,v=\partial_3^2 b,(m,n)=(2,1)$ for the first term and $f=b^\h,g=\partial_3 b,h=\nabla{\partial_3u}^\h,v=\partial_3^2 b,(m,n)=(1,2)$ for the second, we get
	\begin{align}\label{S22}
		S^{(i,j,k)}_{22}
		\lesssim&\|\partial_1b^\h\|_{H^2}^\frac12\|(\partial_1b^\h,\partial_1b^3)\|_{H^2}^\frac12\|\partial_2b^\h\|_{H^1}^\frac12\|\nablah u^\h\|_{H^2}^\frac12\|b^\h\|_{H^2}\|(b^\h,b^3)\|_{H^2}^\frac12\|u^\h\|_{H^2}^\frac12.
	\end{align}
	$S^{(i,j,k)}_{23}$ also can be bounded directly. By  integration by parts, H\"older's inequality, Sobolev embedding theorem and \eqref{L^infty_1D}, it leads
	\begin{align}\label{S23}
		S^{(i,j,k)}_{23}=&\int_{\R^3} \partial_1b^i(\partial_3^2b^k\partial_3^2\partial_1b^j + \partial_3^2b^j\partial_3^2\partial_1^2b^k) +2b^i\partial_3^2\partial_1b^k\partial_3^2\partial_1b^j\dx\nonumber\\
		&-\int_{\R^3} b^i\partial_3^2b^k\partial_3^2\partial_2u^j+ b^i\partial_3^2b^j\partial_3^2\partial_2u^k\dx\nonumber\\
		\lesssim&\|\partial_1b^\h\|_{L^\infty}\|\partial_3^2b^\h\|_{L^2}\|\partial_1\partial_3^2b^\h\|_{L^2}+\|b^\h\|_{L^\infty}\|\partial_3^2\partial_1b^\h\|_{L^2}^2\nonumber\\
		&+\|b^\h\|_{L^2_{x_1}L^\infty_{x_2}L^\infty_{x_3}}\|\partial_3^2b^\h\|_{L^\infty_{x_1}L^2_{x_2}L^2_{x_3}}\|\partial_2\partial_3^2u^\h\|_{L^2}\nonumber\\
		\lesssim&\|\partial_1b^\h\|_{H^2}^2\|b^\h\|_{H^2}+\|\nablah u^\h\|_{H^2}^\frac12\|\partial_1b^\h\|_{H^2}^\frac12\|\partial_2b^\h\|_{H^1}\|b^\h\|_{H^2}.
	\end{align}
	In view of \eqref{W2} and with the help of \eqref{bbbb}, we further split $S^{(i,j,k)}_{21}$ as
	\begin{align}\label{S21deco}
		S^{(i,j,k)}_{21}&=\int_{\R^3} b^ib\cdot\nabla (\partial_3^2b^k\partial_3^2u^j)-b^i\partial_3^2u^jb\cdot\nabla \partial_3^2b^k+ b^ib\cdot\nabla (\partial_3^2b^j\partial_3^2u^k)-b^i\partial_3^2u^kb\cdot\nabla \partial_3^2b^j\dx\nonumber\\
		&=\int_{\R^3}-b^i\partial_3^2u^j\partial_3^2(b\cdot\nabla b^k)+b^i\partial_3^2u^j\partial_3^2b\cdot\nabla b^k+2b^i\partial_3^2u^j\partial_3b\cdot\nabla \partial_3b^k\dx\nonumber\\
		&\quad+\int_{\R^3}-b^i\partial_3^2u^k\partial_3^2(b\cdot\nabla b^j)+b^i\partial_3^2u^k\partial_3^2b\cdot\nabla b^j+2b^i\partial_3^2u^k\partial_3b\cdot\nabla \partial_3b^j\dx+S^{(i,j,k)}_{211}\nonumber\\
		&=-\int_{\R^3}b^i\partial_3^2u^j\partial_3^2(\partial_tu^k+u\cdot \nabla u^k-\Delta_\h u^k+\partial_kp-\partial_2b^k)\dx\nonumber\\
		&\quad-\int_{\R^3}b^i\partial_3^2u^k\partial_3^2(\partial_tu^j+u\cdot \nabla u^j-\Delta_\h u^j+\partial_jp-\partial_2b^j)\dx+S^{(i,j,k)}_{211}+S^{(i,j,k)}_{212}\nonumber\\
		&=-\int_{\R^3}b^i\partial_3^2u^j\partial_3^2\partial_tu^k+b^i\partial_3^2u^k\partial_3^2\partial_tu^j+b^i\partial_3^2u^j u\cdot \nabla \partial_3^2u^k+b^i\partial_3^2u^k u\cdot \nabla \partial_3^2u^j\dx\nonumber\\
		&\quad+S^{(i,j,k)}_{211}+S^{(i,j,k)}_{212}+S^{(i,j,k)}_{213}\nonumber\\
		&=\int_{\R^3}-\frac{\mathrm{d}}{\dt}(b^i\partial_3^2u^j\partial_3^2u^k)+\partial_tb^i\partial_3^2u^k\partial_3^2u^j-b^i u\cdot \nabla (\partial_3^2u^k\partial_3^2u^j)\dx\nonumber\\
		&\quad+S^{(i,j,k)}_{211}+S^{(i,j,k)}_{212}+S^{(i,j,k)}_{213}\nonumber\\
		& =-\frac{\mathrm{d}}{\dt}\int_{\R^3}b^i\partial_3^2u^j\partial_3^2u^k\dx+\int_{\R^3}(-u\cdot\nabla b^i+\partial_1^2b^i+b\cdot\nabla u^i+\partial_2u^i)	\partial_3^2u^k\partial_3^2u^j\dx\nonumber\\
		&\quad -\int_{\R^3}b^i u\cdot \nabla (\partial_3^2u^k\partial_3^2u^j)\dx+S^{(i,j,k)}_{211}+S^{(i,j,k)}_{212}+S^{(i,j,k)}_{213}\nonumber\\
		& =-\frac{\mathrm{d}}{\dt}\int_{\R^3}b^i\partial_3^2u^j\partial_3^2u^k\dx+S^{(i,j,k)}_{211}+S^{(i,j,k)}_{212}+S^{(i,j,k)}_{213}+S^{(i,j,k)}_{214},
	\end{align}
	where
	\begin{align*}
		S^{(i,j,k)}_{211}&=\int_{\R^3} b^ib\cdot\nabla (\partial_3^2b^k\partial_3^2u^j+\partial_3^2b^j\partial_3^2u^k)\dx,\\
		S^{(i,j,k)}_{212}&=\int_{\R^3}b^i(\partial_3^2u^j\partial_3^2b\cdot\nabla b^k+\partial_3^2u^k\partial_3^2b\cdot\nabla b^j)+2b^i(\partial_3^2u^j\partial_3b\cdot\nabla \partial_3b^k+\partial_3^2u^k\partial_3b\cdot\nabla \partial_3b^j)\dx,\\
		S^{(i,j,k)}_{213}&=\int_{\R^3}b^i\partial_3^2u^j\partial_3^2(\Delta_\h u^k-\partial_kp+\partial_2b^k)+b^i\partial_3^2u^k\partial_3^2(\Delta_\h u^j-\partial_jp+\partial_2b^j)\dx,\\
		S^{(i,j,k)}_{214}&=\int_{\R^3}(\partial_1^2b^i+b\cdot\nabla u^i+\partial_2u^i)\partial_3^2u^k\partial_3^2u^j\dx.
	\end{align*}
	To $S^{(i,j,k)}_{211}$, by making use of \eqref{san} with $f=b,g=\nabla b^i$, $h=\partial_3^2b^k$ (resp.$\partial_3^2b^j$), $v=\partial_3^2u^j$ (resp. $\partial_3^2u^k$), $(m,n)=(1,2)$ and integration by parts, it yields
	\begin{align}\label{S211}
		S^{(i,j,k)}_{211}=&-\int_{\R^3} b\cdot\nabla b^i (\partial_3^2b^k\partial_3^2u^j+\partial_3^2b^j\partial_3^2u^k)\dx\nonumber\\
		\lesssim&\|(\partial_2b^\h,\partial_2b^3)\|_{H^1}^\frac12\|(b^\h,b^3)\|_{H^1}^\frac12\|\partial_1b^\h\|_{H^2}\|b^\h\|_{H^2}\|\partial_2u^\h\|_{H^2}^\frac12\|u^\h\|_{H^2}^\frac12\nonumber\\
		\lesssim&\|(\partial_2b^\h,\partial_2b^3)\|_{H^1}^\frac12\|\partial_1b^\h\|_{H^2}\|\nablah u^\h\|_{H^2}^\frac12\|(b^\h,b^3)\|_{H^2}^\frac12\|b^\h\|_{H^2}\|u^\h\|_{H^2}^\frac12.
	\end{align}
	For $S^{(i,j,k)}_{212}$, by taking advantage of \eqref{san} with $f=b^i$, $g=\nabla b^k$ (resp. $\nabla b^j,\partial_3b$), $h=\partial_3^2b$ (resp. $\nabla \partial_3b^k,\nabla \partial_3b^j$), $v=\partial_3^2u^j$ (resp. $\partial_3^2u^k,\partial_3^2u^j,\partial_3^2u^k$), $(m,n)=(1,2)$, it leads
	\begin{align}\label{S212}
		S^{(i,j,k)}_{212}	\lesssim&\|\partial_2b^\h\|_{H^1}^\frac12\|b^\h\|_{H^1}^\frac12\|\partial_1b^\h\|_{H^2}^\frac12\|b^\h\|_{H^2}^\frac12\|(\partial_1b^\h,\partial_1b^3)\|_{H^2}^\frac12\|(b^\h,b^3)\|_{H^2}^\frac12\|\partial_2u^\h\|_{H^2}^\frac12\|u^\h\|_{H^2}^\frac12\nonumber\\
		\lesssim&\|\partial_2b^\h\|_{H^1}^\frac12\|\partial_1b^\h\|_{H^2}^\frac12\|(\partial_1b^\h,\partial_1b^3)\|_{H^2}^\frac12\|\nablah u^\h\|_{H^2}^\frac12\|(b^\h,b^3)\|_{H^2}^\frac12\|b^\h\|_{H^2}\|u^\h\|_{H^2}^\frac12.
	\end{align}
	By integration by parts, we divide $S^{(i,j,k)}_{213}$ into
	\begin{align*}
		S^{(i,j,k)}_{213}=&\sum_{l=1}^{2}\int_{\R^3}(\partial_lb^i\partial_3^2u^j+b^i\partial_l\partial_3^2u^j)\partial_3^2\partial_l u^k+(\partial_lb^i\partial_3^2u^k+b^i\partial_l\partial_3^2u^k)\partial_3^2\partial_l u^j\dx\\
		&+\int_{\R^3}(\partial_kb^i\partial_3^2u^j+b^i\partial_k\partial_3^2u^j)\partial_3^2p+(\partial_jb^i\partial_3^2u^k+b^i\partial_j\partial_3^2u^k)\partial_3^2p\dx\\
		&+\int_{\R^3}(\partial_2b^i\partial_3^2u^j+b^i\partial_2\partial_3^2u^j)\partial_3^2b^k+(\partial_2b^i\partial_3^2u^k+b^i\partial_2\partial_3^2u^k)\partial_3^2b^j\dx\\
		\eqdefa& S^{(i,j,k)}_{2131}+S^{(i,j,k)}_{2132}+S^{(i,j,k)}_{2133}.
	\end{align*}
	In view of $S^{(i,j,k)}_{22}$, by H\"older's inequality, Sobolev embedding theorem and \eqref{L^infty_1D}, we get
	\begin{align}
		S^{(i,j,k)}_{2131}\lesssim&\|\partial_l b^\h\|_{L^\infty_{x_1}L^2_{x_2}L^\infty_{x_3}}\|\partial_3^2u^\h\|_{L^2_{x_1}L^\infty_{x_2}L^2_{x_3}}\|\partial_3^2\partial_l u^\h\|_{L^2}+\|b^\h\|_{L^\infty}\|\partial_3^2\partial_l u^\h\|_{L^2}^2\nonumber\\
		\lesssim&\|\partial_1b^\h\|_{H^2}^\frac12\|\nablah u^\h\|_{H^2}^\frac32\| b^\h\|_{H^2}^\frac12\| u^\h\|_{H^2}^\frac12+\|\nablah u^\h\|_{H^2}^2\|b^\h\|_{H^2},\nonumber\\\label{S2132.0}
		S^{(i,j,k)}_{2132}\lesssim&\left(\|\nablah b^\h\|_{L^\infty_{x_1}L^2_{x_2}L^\infty_{x_3}}\|\partial_3^2u^\h\|_{L^2_{x_1}L^\infty_{x_2}L^2_{x_3}}+\|b^\h\|_{L^\infty}\|\partial_3^2\nablah u^\h\|_{L^2}\right)\|\partial_3^2 p\|_{L^2}\nonumber\\
		\lesssim&\left(\|\partial_1b^\h\|_{H^2}^\frac12\|\nablah u^\h\|_{H^2}^\frac12\| b^\h\|_{H^2}^\frac12\| u^\h\|_{H^2}^\frac12+\|\nablah u^\h\|_{H^2}\|b^\h\|_{H^2}\right)\|\partial_3^2 p\|_{L^2},\\
		S^{(i,j,k)}_{2133}\lesssim&\left(\|\partial_2 b^\h\|_{L^2_{x_1}L^2_{x_2}L^\infty_{x_3}}\|\partial_3^2u^\h\|_{L^2_{x_1}L^\infty_{x_2}L^2_{x_3}}+\|b^\h\|_{L^2_{x_1}L^\infty_{x_2}L^\infty_{x_3}}\|\partial_2\partial_3^2 u^\h\|_{L^2}\right)\|\partial_3^2 b^\h\|_{L^\infty_{x_1}L^2_{x_2}L^2_{x_3}}\nonumber\\
		\lesssim&\left(\|\partial_2b^\h\|_{H^1}\|\nablah u^\h\|_{H^2}^\frac12\| u^\h\|_{H^2}^\frac12+\|\nablah u^\h\|_{H^2}\|\partial_2b^\h\|_{H^1}^\frac12\|b^\h\|_{H^2}^\frac12 \right) \|\partial_1b^\h\|_{H^2}^\frac12\| b^\h\|_{H^2}^\frac12.\nonumber
	\end{align}
	Inserting \eqref{P_H2} into \eqref{S2132.0}, it makes the following estimate hold
	\begin{align*}
		S^{(i,j,k)}_{2132}
		\lesssim&\left(\|\partial_1b^\h\|_{H^2}^\frac12\|\nablah u^\h\|_{H^2}^\frac12\| b^\h\|_{H^2}^\frac12\| u^\h\|_{H^2}^\frac12+\|\nablah u^\h\|_{H^2}\|b^\h\|_{H^2}\right)\nonumber\\
		&\times\bigg(\|\nablah u^\h\|_{H^2}\|u^\h\|_{H^2}+ \|\nablah u^\h\|_{H^2}^\frac12\|\nablah u^3\|_{H^2}^\frac12\|u^\h\|_{H^2}^\frac12\|u^3\|_{H^2}^\frac12 \nonumber\\
		&+\|\partial_1b^\h\|_{H^2}^\frac12\|\partial_2b^\h\|_{H^1}^\frac12\|b^\h\|_{H^2}+\|\partial_1b^\h\|_{H^2}^\frac12\|\partial_2b^3\|_{H^1}^\frac12\|b^\h\|_{H^2}^\frac12\|b^3\|_{H^2}^\frac12\bigg)\nonumber\\
		\lesssim&\|\nablah u^\h\|_{H^2}^2\|(u^\h,b^\h)\|_{H^2}^2+\|\partial_1b^\h\|_{H^2}\|b^\h\|_{H^2}\left(\|\partial_2b^\h\|_{H^1}\|b^\h\|_{H^2}+\|\partial_2b^3\|_{H^1}\|b^3\|_{H^2}\right)\nonumber\\
		&+ \|\nablah u^\h\|_{H^2}\|u^\h\|_{H^2}\left(\|\nablah u^3\|_{H^2}\|u^3\|_{H^2}+\|\partial_1b^\h\|_{H^2}\| b^\h\|_{H^2}\right).
	\end{align*}
	As a result, combining above inequalities, we get
	\begin{align}\label{S213}
		S^{(i,j,k)}_{213}\lesssim&\|\nablah u^\h\|_{H^2}^2\|(u^\h,b^\h)\|_{H^2}^2+\|\partial_1b^\h\|_{H^2}^\frac12\|\partial_2b^\h\|_{H^1}^\frac12\|\nablah u^\h\|_{H^2}\|b^\h\|_{H^2}\nonumber\\
		&+\left(\|\nablah u^\h\|_{H^2}+\|\partial_2b^\h\|_{H^1}\right)\|\nablah u^\h\|_{H^2}^\frac12\|\partial_1b^\h\|_{H^2}^\frac12\| u^\h\|_{H^2}^\frac12  \| b^\h\|_{H^2}^\frac12\nonumber\\
		&+\|\partial_1b^\h\|_{H^2}\|b^\h\|_{H^2}\left(\|\partial_2b^\h\|_{H^1}\|b^\h\|_{H^2}+\|\partial_2b^3\|_{H^1}\|b^3\|_{H^2}\right)\nonumber\\
		&+ \|\nablah u^\h\|_{H^2}\|u^\h\|_{H^2}\left(\|\nablah u^3\|_{H^2}\|u^3\|_{H^2}+\|\partial_1b^\h\|_{H^2}\| b^\h\|_{H^2}\right).
	\end{align}
	As to  $S^{(i,j,k)}_{214}$,  along the same lines of  $S^{(i,j,k)}_{211}$ and $S^{(i,j,k)}_{212}$, by applying \eqref{L^infty_1D}, Sobolev embedding theorem and \eqref{san} with $f=b,g=\nabla u^i,h=\partial_3^2u^k,v=\partial_3^2u^j,(m,n)=(2,1)$ again, we obtain
	\begin{align}\label{S214}
		S^{(i,j,k)}_{214}=&\int_{\R^3}-\partial_1b^i(\partial_1\partial_3^2u^k\partial_3^2u^j+\partial_3^2u^k\partial_1\partial_3^2u^j)+(b\cdot\nabla u^i+\partial_2u^i)\partial_3^2u^k\partial_3^2u^j\dx\nonumber\\
		\lesssim&\|\partial_1b^\h\|_{L^\infty}\|\partial_1\partial_3^2u^\h\|_{L^2}\|\partial_3^2u^\h\|_{L^2}+\|\partial_2u^\h\|_{L^2_{x_1}L^2_{x_2}L^\infty_{x_3}}\|\partial_3^2u^\h\|_{L^\infty_{x_1}L^2_{x_2}L^2_{x_3}}\|\partial_3^2u^\h\|_{L^2_{x_1}L^\infty_{x_2}L^2_{x_3}}\nonumber\\
		&+\|(\partial_2b^\h,\partial_2b^3)\|_{H^1}^\frac12\|(b^\h,b^3)\|_{H^1}^\frac12\|\partial_2u^\h\|_{H^2}^\frac12\|u^\h\|_{H^2}^\frac12\|\partial_1u^\h\|_{H^2}\|u^\h\|_{H^2}\nonumber\\
		\lesssim&\left(\|\partial_1b^\h\|_{H^2}+\|\nablah u^\h\|_{H^2}\right)\|\nablah u^\h\|_{H^2}\|u^\h\|_{H^2}\nonumber\\
		&+\|(\partial_2b^\h,\partial_2b^3)\|_{H^1}^\frac12\|\nablah u^\h\|_{H^2}^\frac32\|(b^\h,b^3)\|_{H^1}^\frac12\|u^\h\|_{H^2}^\frac32.
	\end{align}
	Substituting \eqref{S211}, \eqref{S212}, \eqref{S213} and \eqref{S214} into \eqref{S21deco} gives the estimate of $S^{(i,j,k)}_{21}$; together with \eqref{S22}, \eqref{S23}, this finally leads to
	\begin{align*}
		S^{(i,j,k)}_{2}\leqq&\frac{\mathrm{d}}{\dt}\int_{\R^3} b^i\partial_3^2b^j\partial_3^2b^k-b^i\partial_3^2u^j\partial_3^2u^k\dx+C\|\nablah u^\h\|_{H^2}^2\|(u^\h,b^\h)\|_{H^2}\left(1+\|(u^\h,b^\h)\|_{H^2}\right)\nonumber\\
		&+C\|\partial_1b^\h\|_{H^2}^\frac12\|\partial_2b^\h\|_{H^1}^\frac12\|\nablah u^\h\|_{H^2}\|b^\h\|_{H^2}+C\|\partial_1b^\h\|_{H^2}\|\nablah u^\h\|_{H^2}\|u^\h\|_{H^2}\nonumber\\
		&+C\left(\|\nablah u^\h\|_{H^2}+\|\partial_2b^\h\|_{H^1}\right)\|\nablah u^\h\|_{H^2}^\frac12\|\partial_1b^\h\|_{H^2}^\frac12\| u^\h\|_{H^2}^\frac12  \| b^\h\|_{H^2}^\frac12\nonumber\\
		&+C\|(\partial_2b^\h,\partial_2b^3)\|_{H^1}^\frac12\|\nablah u^\h\|_{H^2}^\frac32\|(b^\h,b^3)\|_{H^1}^\frac12\|u^\h\|_{H^2}^\frac32\nonumber\\
		&+C\|(\partial_2b^\h,\partial_2b^3)\|_{H^1}^\frac12\|\nablah u^\h\|_{H^2}^\frac12\|\partial_1b^\h\|_{H^2}\|(b^\h,b^3)\|_{H^2}^\frac12\|b^\h\|_{H^2}\|u^\h\|_{H^2}^\frac12\nonumber\\
		&+C\|\partial_2b^\h\|_{H^1}^\frac12\|\partial_1b^\h\|_{H^2}^\frac12\|(\partial_1b^\h,\partial_1b^3)\|_{H^2}^\frac12\|\nablah u^\h\|_{H^2}^\frac12\|(b^\h,b^3)\|_{H^2}^\frac12\|b^\h\|_{H^2}\|u^\h\|_{H^2}^\frac12\nonumber\\
		&+C\|\partial_1b^\h\|_{H^2}\|b^\h\|_{H^2}\left(\|\partial_2b^\h\|_{H^1}\|b^\h\|_{H^2}+\|\partial_2b^3\|_{H^1}\|b^3\|_{H^2}\right)\nonumber\\
		&+ C\|\nablah u^\h\|_{H^2}\|u^\h\|_{H^2}\left(\|\nablah u^3\|_{H^2}\|u^3\|_{H^2}+\|\partial_1b^\h\|_{H^2}\| b^\h\|_{H^2}\right).
	\end{align*}
	Applying Young's inequality, it brings out
	\begin{align}\label{S2}
		S^{(i,j,k)}_{2}\leqq &\frac{\mathrm{d}}{\dt}\int_{\R^3} b^i\partial_3^2b^j\partial_3^2b^k-b^i\partial_3^2u^j\partial_3^2u^k\dx+\frac{1}{100}\|(\nablah u^\h,\partial_1b^\h)\|_{H^2}^2\nonumber\\
		&+C\left(\|(\nablah u^\h,\partial_1b^\h)\|_{H^2}^2+\|(\partial_2b^\h,\partial_2b^3)\|_{H^1}^2\right)\|(u^\h,b^\h)\|_{H^2}^2\nonumber\\
		&+C\|\nablah u^3\|_{H^2}^2\|u^3\|_{H^2}^2 \|u^\h\|_{H^2}^2+C\|\partial_2b^3\|_{H^1}^2\|b^3\|_{H^2}^2\|b^\h\|_{H^2}^2\nonumber\\
		&+C\|(\partial_2b^\h,\partial_2b^3)\|_{H^1}^2\|(b^\h,b^3)\|_{H^2}^2\|u^\h\|_{H^2}^2.
	\end{align}
	For the last term of $S^{(i,j,k)}_{3}$, we rewrite it as
	\begin{align*}
		S^{(i,j,k)}_{3}=&\int_{\R^3} b^i (\partial_3^2b^k\partial_3^2u\cdot\nabla{b}^j+\partial_3^2b^j\partial_3^2u\cdot\nabla{b}^k)\dx\\
		&+2\int_{\R^3}b^i(\partial_3^2b^k\partial_3u\cdot\nabla\partial_3{b}^j+\partial_3^2b^j\partial_3u\cdot\nabla\partial_3{b}^k)\dx
		\eqdefa S^{(i,j,k)}_{31}+S^{(i,j,k)}_{32}.
	\end{align*}
	In the same method, applying \eqref{san} with $f=b^i$, $g=\nabla b^j$ (resp. $\nabla b^k$), $h=\partial_3^2b^k$ (resp. $\partial_3^2b^j$), $v=\partial_3^2u$, $(m,n)=(1,2)$, it holds that
	\begin{align*}
		S^{(i,j,k)}_{31}\lesssim&\|\partial_2b^\h\|_{H^1}^\frac12\|b^\h\|_{H^1}^\frac12\|\partial_1b^\h\|_{H^2}\|b^\h\|_{H^2}\|(\partial_2u^\h,\partial_2u^3)\|_{H^2}^\frac12\|(u^\h,u^3)\|_{H^2}^\frac12\\
		\lesssim&\|\partial_2b^\h\|_{H^1}^\frac12\|\partial_1b^\h\|_{H^2}\|(\nablah u^\h,\nablah u^3)\|_{H^2}^\frac12\|b^\h\|_{H^2}^\frac32\|(u^\h,u^3)\|_{H^2}^\frac12,
	\end{align*}
	and using \eqref{san} with $f=b^i$, $g=\partial_3u$, $h=\partial_3^2b^k$ (resp. $\partial_3^2b^j$), $v=\nabla\partial_3{b}^j$ (resp. $\nabla\partial_3{b}^k$), $(m,n)=(2,1)$ again, we have
	\begin{align*}
		S^{(i,j,k)}_{32}\lesssim&\|\partial_2b^\h\|_{H^1}^\frac12\|b^\h\|_{H^1}^\frac12\|(\partial_2u^\h,\partial_2u^3)\|_{H^2}^\frac12\|(u^\h,u^3)\|_{H^2}^\frac12\|\partial_1b^\h\|_{H^2}\|b^\h\|_{H^2}\\
		\lesssim&\|\partial_2b^\h\|_{H^1}^\frac12\|\partial_1b^\h\|_{H^2}\|(\nablah u^\h,\nablah u^3)\|_{H^2}^\frac12\|b^\h\|_{H^2}^\frac32\|(u^\h,u^3)\|_{H^2}^\frac12.
	\end{align*}
	Therefore, by summing up the above estimates, we conclude
	\begin{align}\label{S3}
		S^{(i,j,k)}_{3}\lesssim&\|\partial_2b^\h\|_{H^1}^\frac12\|\partial_1b^\h\|_{H^2}\|(\nablah u^\h,\nablah u^3)\|_{H^2}^\frac12\|b^\h\|_{H^2}^\frac32\|(u^\h,u^3)\|_{H^2}^\frac12\nonumber\\
		\leqq &\frac{1}{100}\|\partial_1b^\h\|_{H^2}^2+C\left(\|\partial_1b^\h\|_{H^2}^2+\|\partial_2b^\h\|_{H^1}^2\right)\|b^\h\|_{H^2}^2\nonumber\\
		&+C\|(\nablah u^\h,\nablah u^3)\|_{H^2}^2\|(u^\h,u^3)\|_{H^2}^2\|b^\h\|_{H^2}^2.
	\end{align}
	As a consequence, combining \eqref{S1}, \eqref{S2}, \eqref{S3} and Young's inequality, we obtain
	\begin{align*}
		\int_{\R^3}\partial_2u^i\partial_3^2b^j\partial_3^2b^k\dx \leqq&\frac{\mathrm{d}}{\dt}\int_{\R^3} b^i\partial_3^2b^j\partial_3^2b^k-b^i\partial_3^2u^j\partial_3^2u^k\dx+\frac{3}{100}\|(\nablah u^\h,\partial_1b^\h)\|_{H^2}^2\nonumber\\
		&+C\left(\|(\nablah u^\h,\partial_1b^\h)\|_{H^2}^2+\|(\partial_2b^\h,\partial_2b^3)\|_{H^1}^2\right)\|(u^\h,b^\h)\|_{H^2}^2\nonumber\\
		&+C\left(\|\nablah u^3\|_{H^2}^2\|u^3\|_{H^2}^2+ \|(\partial_2b^\h,\partial_2b^3)\|_{H^1}^2\|(b^\h,b^3)\|_{H^2}^2\right)\|u^\h\|_{H^2}^2\\	&+C\left(\|\partial_2b^3\|_{H^1}^2\|b^3\|_{H^2}^2+\|(\nablah u^\h,\nablah u^3)\|_{H^2}^2\|(u^\h,u^3)\|_{H^2}^2\right)\|b^\h\|_{H^2}^2.
	\end{align*}
	This completes the proof of Lemma \ref{wild}.
\end{proof}

\section*{Acknowledgement}
This work is partially supported by the Zhejiang Provincial Natural Science Foundation of China LZ25A010003.

\bibliographystyle{elsarticle-num}

\noindent {\bf Acknowledgments.}
This work is partially supported by the National Natural Science Foundation of China  11931010, and Zhejiang Provincial Natural Science Foundation of China LDQ23A010001.

\medskip
\end{document}